\documentclass[11pt,leqno]{article}
\usepackage{amsmath}
\usepackage{subfigure}
\usepackage{amsthm}
\usepackage{amstext}
\usepackage{amsopn}
\usepackage{texdraw}
\usepackage{graphicx}
\usepackage{pstricks,pst-plot}
\usepackage{mathrsfs}

\usepackage{multicol,graphicx,color}
\usepackage{pslatex}
\usepackage{amsthm}
\usepackage{amsmath}
\usepackage{amssymb}
\usepackage{latexsym}
\usepackage{lscape}
\usepackage{epsfig}
\usepackage{pstricks}
\usepackage{amsfonts}
\usepackage{enumerate}
\usepackage{exscale}
\usepackage{relsize}

\oddsidemargin 0in \topmargin 0in \textwidth 6.2in \textheight 8.4in %9.2in
\baselineskip=20pt
\parskip=2mm
\parindent=20pt

\newtheorem{theorem}{Theorem}[section]
\newtheorem{lemma}[theorem]{Lemma}
\newtheorem{proposition}[theorem]{Proposition}
\newtheorem{corollary}[theorem]{Corollary}
\theoremstyle{definition}

\theoremstyle{remark}
\newtheorem{remark}[theorem]{Remark}

\begin{document}
\title{The Broucke-H\'{e}non orbit and the Schubart orbit in the planar three-body problem with equal masses}
\author{
Wentian Kuang\\
Chern Institute of Mathematics, Nankai University\\
Tianjin 300071, China\\
Emails: kuangwt1234@163.com
\and
Tiancheng Ouyang \\
 Department of Mathematics, Brigham Young University\\
 Provo, Utah 84602, USA\\
 Email: ouyang@math.byu.edu
 \and
 Zhifu Xie\\
Department of Mathematics, University of Southern Mississippi\\
Hattiesburg, Mississippi 39406, USA\\
 Email: Zhifu.Xie@usm.edu
 \and
Duokui Yan \\
School of Mathematics and System Sciences, Beihang University\\
  Beijing 100191, China \\
  Email: duokuiyan@buaa.edu.cn
}
\date{}

\maketitle
\begin{abstract}
In this paper, we study the variational properties of two special orbits: the Schubart orbit and the Broucke-H\'{e}non orbit. We show that under an appropriate topological constraint, the action minimizer must be either the Schubart orbit or the Broucke-H\'{e}non orbit. One of the main challenges is to prove that the Schubart orbit coincides with the action minimizer connecting a collinear configuration with a binary collision and an isosceles configuration. A new geometric argument is introduced to overcome this challenge.  
\end{abstract}

{\bf Key word:} Variational Method, Schubart Orbit, Broucke-H\'{e}non orbit, $N$-body Problem.\\
{\bf AMS classification number:} 37N05, 70F10, 70F15, 37N30, 70H05, 70F17\\

\section{ Introduction}
H\'{e}non \cite{HM1, HM} numerically found a one-parameter family of periodic orbits in the planar equal-mass three-body problem, in which the angular momentum is chosen as the non-trivial parameter. In this family, there is a special orbit, as shown in Fig.\ref{Henonorbit}, which has a simple shape and good symmetry properties. This orbit was also independently discovered by Broucke \cite{BR} and it is called as the Broucke-H\'{e}non orbit in this paper. 

 \begin{figure}[ht]
    \begin{center}
    \subfigure[ \, Minimizing path of the Broucke-H\'{e}non orbit]{\includegraphics[width=2.7in, height=2.1in]{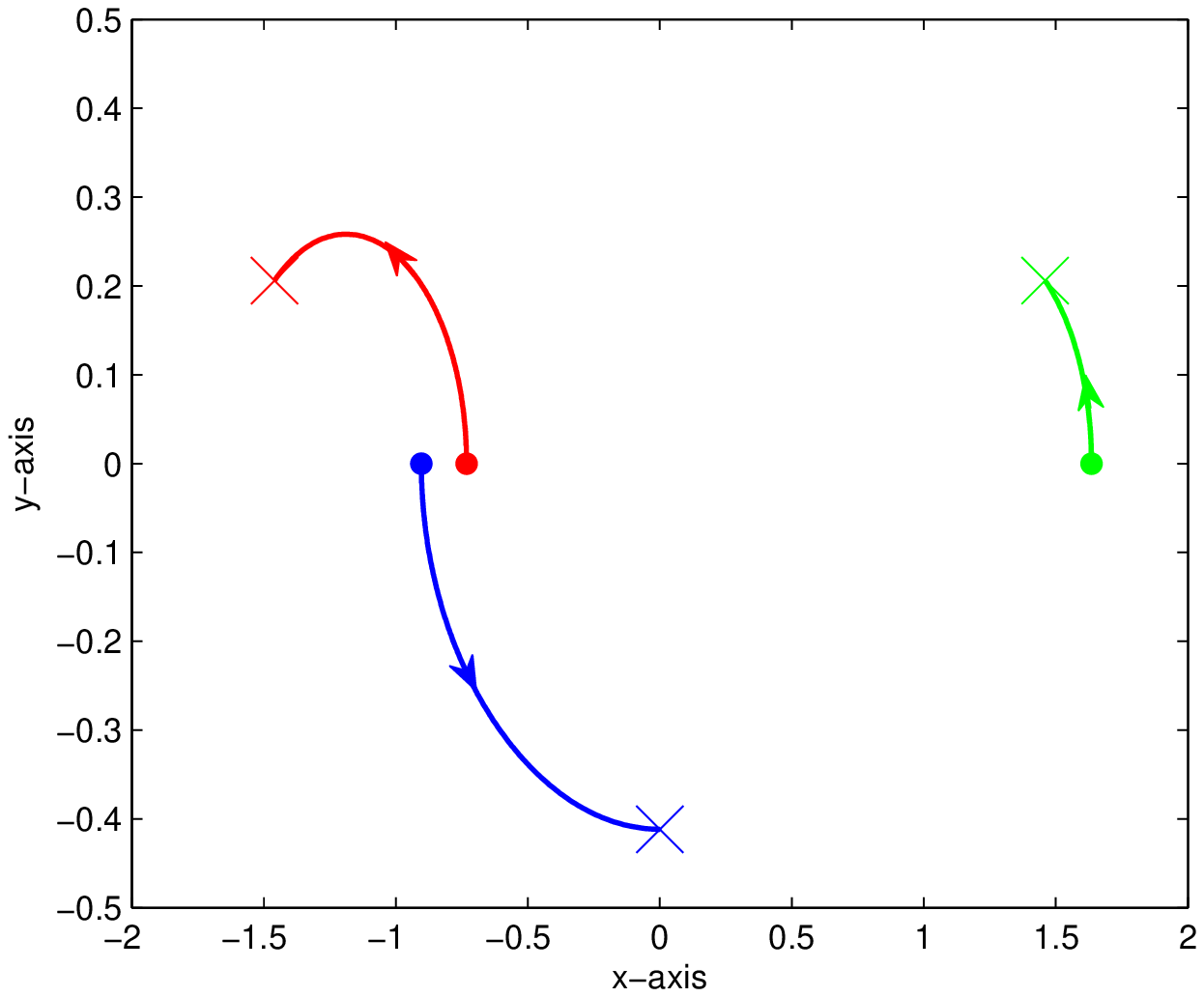}}
    \subfigure[ \, Trajectory of the Broucke-H\'{e}non orbit ]{\includegraphics[width=2.7in, height=2.1in]{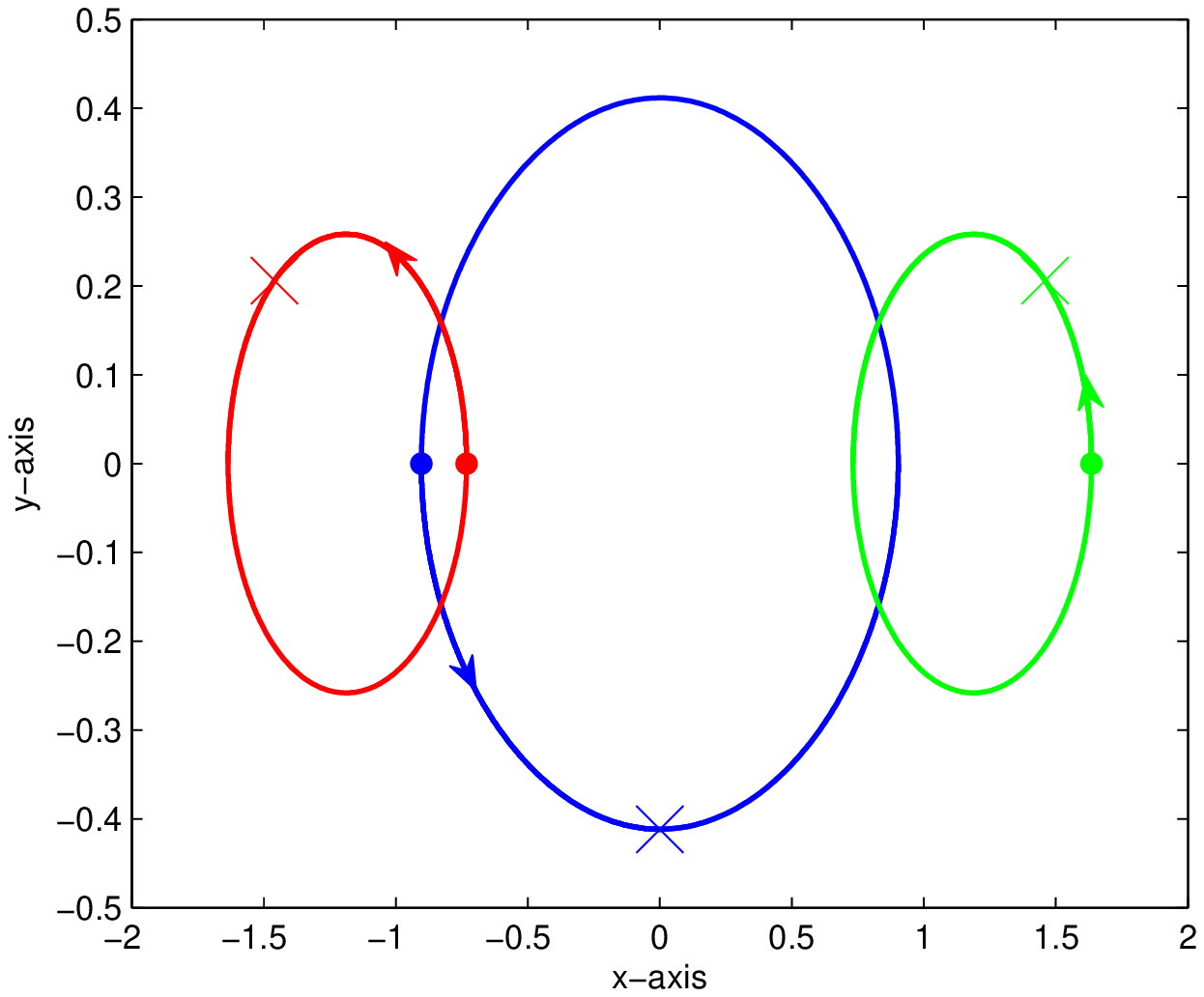}}
   \end{center}
 \caption{ \label{Henonorbit}Motion of the Broucke-H\'{e}non orbit. At $t=0$, the three masses (in dots) form a collinear configuration. At $t=1$, they (in crosses) form an isosceles configuration. The path shown in Fig. $($a$)$ is an action minimizer of a two-point free boundary value problem connecting a collinear configuration and an isosceles configuration.  Fig. $($b$)$ is the periodic solution extended by the path in Fig. $($a$)$.} 
 \end{figure}
In 2000, Chenciner and Montgomery \cite{CM} proved the existence of the figure-eight solution in the planar three-body problem with equal masses by using the variational method. Since then, a number of new periodic solutions have been discovered and proven to exist. A workshop on Variational Methods in Celestial Mechanics was organized by Chenciner and Montgomery in 2003 to address the possible applications of variational method in studying the Newtonian N-body problem, while several open problems are proposed by the attending experts. The existence of the Broucke-H\'{e}non orbit is one of these open problems, which was proposed by A. Venturelli. Actually, he noticed that the Schubart orbit with collision \cite{Moe, Sch}  (Fig. \ref{Schubartorbit}) is on the closure of the homology class $(1,\, 0, \, 1)$ and it is not clear if the Broucke-H\'{e}non orbit (Fig. \ref{Henonorbit}) is a minimizer in the homology class $(1, \, 0,\, 1)$.
\begin{figure}[ht]
    \begin{center}
\subfigure[ \, Minimizing path of the Schubart orbit]{\includegraphics[width=2.7in, height=2.1in]{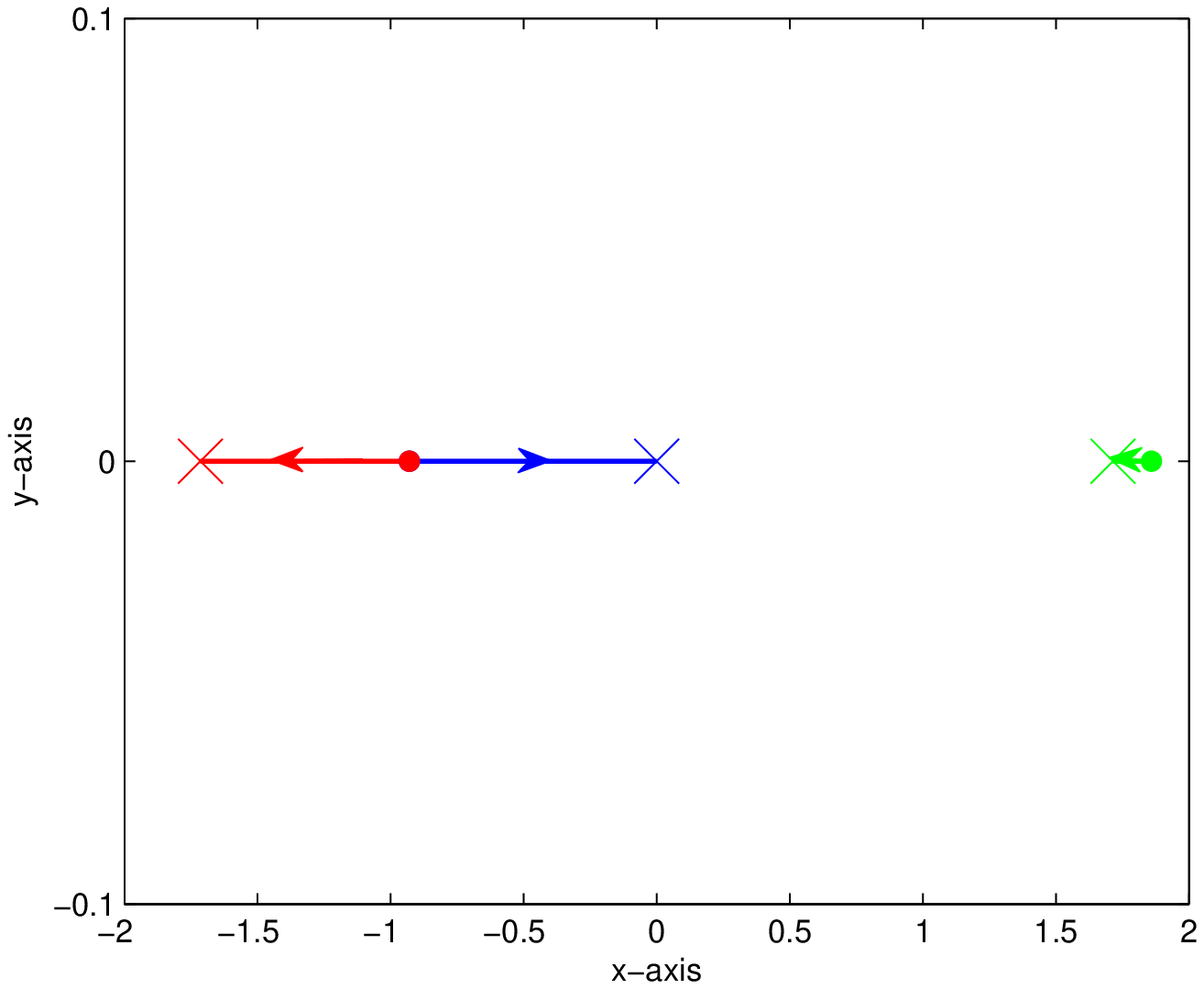}}
\subfigure[ \, Trajectory of the Schubart orbit]{\includegraphics[width=2.7in, height=2.1in]{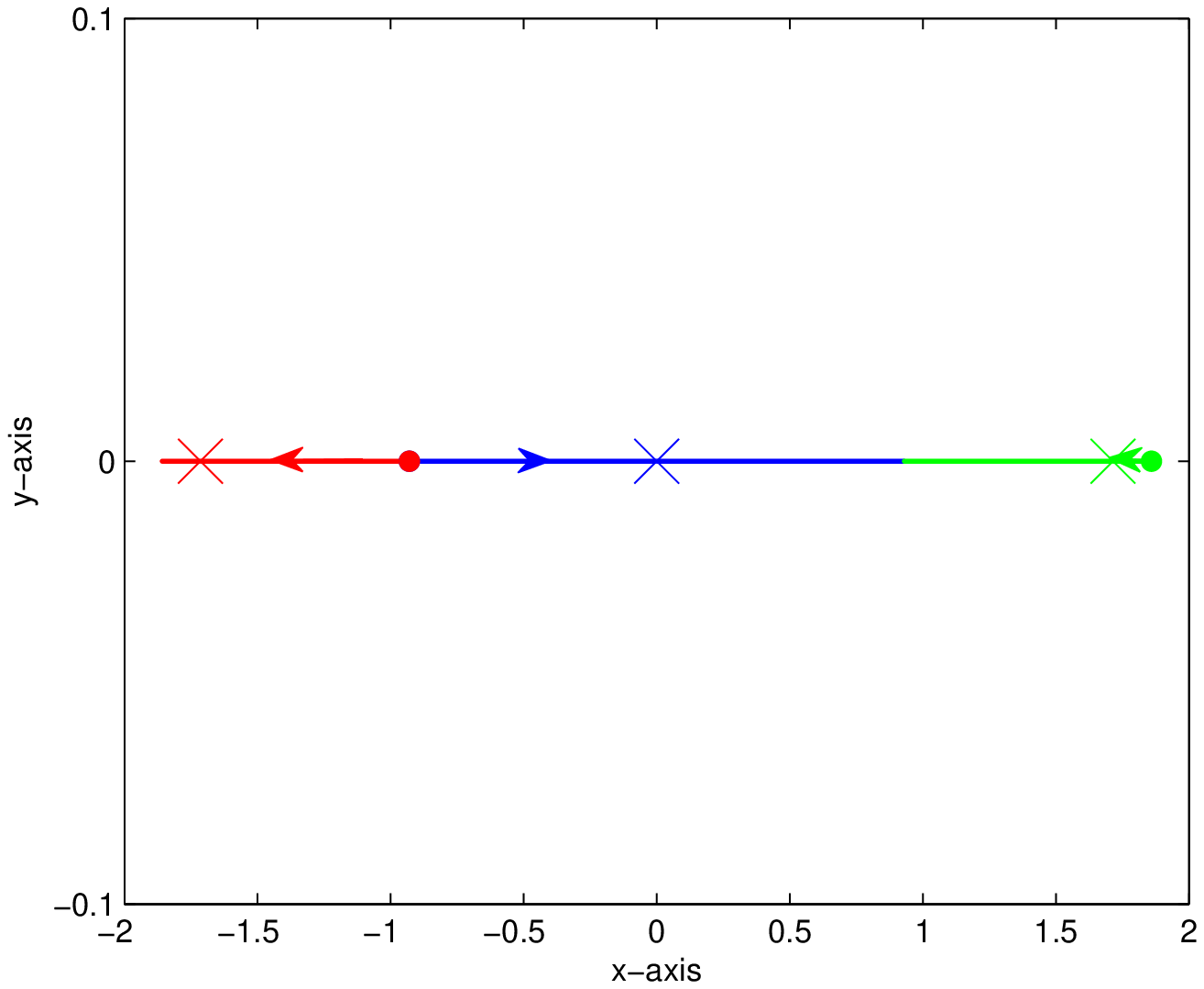}} 
 \end{center}
\caption{ \label{Schubartorbit}Motion of the Schubart orbit. At $t=0$, body 1(blue dot) and body 2 (red dot) collide on the $x-$axis, while body 3 (green dot) stays away from them. At $t=1$, they (in crosses) form an Euler configuration. The path in Fig. $($a$)$ is one fourth of the Schubart orbit, which is an action minimizer in a two-point free boundary value problem connecting a collinear configuration with a binary collision and an Euler configuration.  Fig. $($b$)$ is the Schubart orbit extended from the path in Fig. $($a$)$.}
 \end{figure}
 
In this paper, we use a variational approach to study the existence of the Broucke-H\'{e}non orbit and the Schubert orbit. The variational method we used is based on a two-point free boundary value problem. By using this approach, many stable choreographic solutions \cite{OuXie} and other periodic solutions \cite{YAN2, YAN4, YAN5} have been found numerically and proved theoretically. In order to obtain the Broucke-H\'{e}non orbit and the Schubart orbit, we first choose appropriate structural prescribed boundary conditions in the variational frame.

Let $q_i=q_i(t) \, (i=1, 2, \dots, N)$ denote the position of mass $m_i$ in $\mathbb{R}^d$. Set $q=\begin{bmatrix}
q_1 \\
q_2 \\
\dots \\
q_N
\end{bmatrix}$ to be an $N \times d$ matrix, where $N$ is the number of bodies and $d$ is the dimension.  Without loss of generality, we assume that the center of mass is always at the origin. Let \[\chi= \left\{q\, \bigg|  \, \sum_{i=1}^{N} m_i q_i =0\right\}. \] The Lagrangian action functional $\mathcal{A}$ is defined as follows% of the kinetic minus the potential energy:
\begin{equation}\label{Action}
\mathcal{A}=\mathcal{A}(q(t), \dot{q}(t))=\int_{0}^{1} (K+U) \, dt,
\end{equation}
where $\displaystyle K=K(\dot{q}(t))=\frac{1}{2}\sum_{i=1}^{N} m_i|\dot{q}_i(t)|^2$ is the kinetic energy and  $\displaystyle U=U(q(t))=\sum_{1\leq i<j\leq N}\frac{m_i m_j}{|{q}_i(t)-{q}_j(t)|}$ is the Newtonian
potential. It is known that collision-free critical points of the action functional $\mathcal{A}$ are trajectories satisfying the equations of motion, i.e. the Newtonian equations:
\begin{equation}\label{Newton}
m_i\ddot{q}_i=\frac{\partial U}{\partial q_i}=\sum_{j=1,j\not= i}^{N} \frac{m_im_j(q_j-q_i)}{|q_j-q_i|^3}, \,  \hspace{1cm} 1\leq i\leq N.
\end{equation}

Instead of studying a periodic solution directly, a segment of a periodic solution will be considered in a two-point free boundary value problem. The variational method we use is then a two-step minimizing procedure. First, we consider a fixed boundary value problem, which is also known as the Bolza problem. For given boundary matrices $q(0)$ and $q(1)$, there exists an action minimizer $\mathcal{P}$ connecting them.  By Marchal \cite{Ma1} and Chenciner's \cite{CA2} work, this minimizer $\mathcal{P}$ is collision free except the possible collisions at the boundary points. However, if one wants $\mathcal{P}$ to be a part of a periodic solution, the two boundaries must be quite special. Hence, we introduce a second minimizing procedure. Instead of fixing the boundaries, we free several parameters on the boundaries $q(0)$ and $q(1)$. The Lagrangian action functional is then minimized over these parameters. The resulting minimizing path may be extended to a periodic solution or a quasi-periodic solution. There are mainly three challenges to show the existence of such classical solutions in the variational method. The first is the existence of minimizer of the functional under the boundary constraints. The second is the collision-free of the minimizer on the boundaries. The third is whether the minimizing path can be extended to a periodic solution that we have expected. With appropriate choices of the free boundaries, the three challenges can be resolved.  

To introduce this variational method in detail, we define two boundary matrices $Q_s$ and $Q_e$ as follows:
\begin{equation}\label{Q_sQ_e1}
Q_s= \begin{bmatrix}
q_1(a_1, \dots a_k) \\
\dots \\
q_N(a_1, \dots a_k) 
\end{bmatrix}, \qquad  Q_e= \begin{bmatrix}
q_1(b_1, \dots b_s) \\
\dots \\
q_N(b_1, \dots b_s) 
\end{bmatrix}, 
\end{equation}
where $q_i \in \mathbb{R}^d \, (d=1,2,3; \, i=1, \dots, N)$ and $\vec{\alpha}=(a_1, a_2, \dots, a_k), \, \vec{\beta}=(b_1, b_2, \dots, b_s)$ are independent variables. Let 
\[P(Q_s, \, Q_e)= \left\{ q(t) \in H^1([0, 1], \chi) \,| \, q(0)=Q_s, \, q(1)=Q_e \right\}.\]
The following problem is considered:
\begin{equation}\label{introtwostepminprob}
\inf_{ (\vec{\alpha}, \, \vec{\beta}) \in  \mathcal{S}  }  \quad \inf_{ q(t) \in P(Q_s, Q_e) }  \mathcal{A},
\end{equation}
where $\mathcal{S} $ is  a closed subset in $R^{k+s}$ and $\mathcal{A}= \int_0^1 (K+U) \, dt$. % and $\displaystyle \mathcal{A}= \int_0^1 L(q_i, \dot{q}_i) \, dt =\int_0^1 \left( K+U \right) \, dt$. 
 The first question is the coercivity of the functional $\mathcal{A}$ in the minimizing problem \eqref{introtwostepminprob}. By making some general assumptions on $Q_s$ and $Q_e$, we show that the coercivity of the functional $\mathcal{A}$ in \eqref{introtwostepminprob} holds.
\begin{theorem}\label{Thm1.1}       %[Theorem \ref{coercivitythm}]
Let 
\begin{equation}\label{Q_sQ_e}
q(0)=Q_s= \begin{bmatrix}
q_1(a_1, \dots, a_k) \\
\dots \\
q_N(a_1, \dots, a_k) 
\end{bmatrix}, \qquad  q(1)=Q_e= \begin{bmatrix}
q_1(b_1, \dots, b_s) \\
\dots \\
q_N(b_1, \dots, b_s) 
\end{bmatrix}, 
\end{equation}
where $Q_s, Q_e \in \chi$, $q_i \in \mathbb{R}^d (i=1, \dots, N)$ and $\vec{\alpha}=(a_1, \, \dots, \, a_k), \,\vec{\beta}=( b_1, \,  \dots,  \, b_s)$ are independent variables. Let $\vec{\alpha} \in \mathcal{S}_1$,  $\vec{\beta} \in  \mathcal{S}_2$, where $ \mathcal{S}_1 \subset \mathbb{R}^k$ and $ \mathcal{S}_2 \subset \mathbb{R}^s$ are closed subsets.  $\mathcal{S}_1$ and $\mathcal{S}_2$ have either finitely many lines as its boundary or no boundary. Let $\mathcal{S}_1 \cup\mathcal{S}_2=\mathcal{S}$.  Assume that both $\left\{Q_s \, \big| \, \vec{\alpha}  \in  \mathbb{R}^k \right\}$ and $ \{Q_e  \, \big| \,  \vec{\beta}  \in  \mathbb{R}^s  \}$ are linear subspaces. If the intersection of the two configuration subsets is at origin or equal to an empty set, i.e. 
\[ \left\{Q_s \, \big|\,  \vec{\alpha}  \in  \mathcal{S}_1 \right\} \cap  \{Q_e  \, \big| \, \vec{\beta} \in  \mathcal{S}_2 \}=\{\vec{0} \} \, \, \text{or} \,  \, \varnothing, \]
then there exist a path sequence $\{ \mathcal{P}_{n_l} \}$ and a minimizer $\mathcal{P}_0$ in $H^1([0, 1], \chi)$, such that for each $n_l$,
\[ \mathcal{A}(\mathcal{P}_{n_l})= \inf_{q \in P(Q_{s_{n_l}},Q_{e_{n_l}}) }  \mathcal{A},\]
\begin{equation*}
\mathcal{A}(\mathcal{P}_0) = \inf_{  (\vec{\alpha}, \vec{\beta}) \in  \mathcal{S}  } \quad \inf_{ q \in P(Q_s,Q_e) }  \mathcal{A}= \inf_{q \in P(Q_{s_{0}},Q_{e_{0}}) }  \mathcal{A},
\end{equation*}
where \[P(Q_{s_{n_l}},Q_{e_{n_l}})= \left\{q \in H^1([0,1],\chi) \,\big{|}  \,q(0)=Q_s(\vec{\alpha}_{n_l}), \,  q(1)=Q_e(\vec{\beta}_{n_l}) \right\}\] and 
\[P(Q_{s_{0}},Q_{e_{0}}) =  \left\{q \in H^1([0,1],\chi) \,\big{|}  \,q(0)=Q_s(\vec{\alpha}_{0}), \, q(1)= Q_e(\vec{\beta}_{0})  \right\}\] with $\vec{\alpha}_{n_l}= (a_{{1}_{n_l}},\dots, a_{{k}_{n_l}})$ and $\vec{\beta}_{n_l}= (b_{{1}_{n_l}},\dots, b_{{s}_{n_l}})$.
 For $t \in [0, 1]$,  $\displaystyle  \mathcal{P}_{n_l}(t)$ converges to $\mathcal{P}_0 (t)$ uniformly. In particular,
\[ \lim_{n_l \to \infty} \vec{\alpha}_{{n_l}} = \vec{\alpha}_{{0}}, \qquad  \lim_{n_l \to \infty} \vec{\beta}_{{n_l}} = \vec{\beta}_{{0}}.\]
\end{theorem}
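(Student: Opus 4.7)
The plan is to attack the two nested infima in order: first prove existence for the inner Bolza problem at fixed boundary parameters, then establish coercivity in the outer parameters $(\vec\alpha,\vec\beta)$ using the intersection hypothesis, and finally combine the two convergences.

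For the inner problem at a fixed pair $(\vec\alpha_{n_l},\vec\beta_{n_l})$, I would use the direct method. On $P(Q_s,Q_e)$ the fixed Dirichlet data together with the kinetic bound $\int_0^1 K\,dt \ge \tfrac12\sum m_i\int_0^1|\dot q_i|^2\,dt$ control the full $H^1$-norm of any minimizing sequence, so a subsequence converges weakly in $H^1$ and, by Rellich's theorem, uniformly on $[0,1]$. Weak lower semicontinuity of $\int_0^1 K\,dt$ (convex integrand in $\dot q$) together with Fatou's lemma applied to the nonnegative potential $\int_0^1 U\,dt$ produce a minimizer $\mathcal P_{n_l}$ realizing the inner infimum.

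For the outer problem, the crucial coercivity estimate uses the assumption that $V_1:=\{Q_s(\vec\alpha):\vec\alpha\in\mathbb R^k\}$ and $V_2:=\{Q_e(\vec\beta):\vec\beta\in\mathbb R^s\}$ are linear subspaces of $\chi$ whose intersection inside $\mathcal S_1\times\mathcal S_2$ is either $\{0\}$ or empty. Because $V_1+V_2$ is then a genuine direct sum, a standard linear-algebra fact furnishes a constant $C>0$ with
\[
|u-v|\;\ge\;C\bigl(|u|+|v|\bigr)\qquad\text{for every }u\in V_1,\ v\in V_2.
\]
Combined with the Cauchy--Schwarz bound $2\int_0^1 K\,dt\ge |q(1)-q(0)|^{2}$ in the mass-weighted inner product, this yields
\[
\mathcal A(q)\;\ge\;\tfrac{C^{2}}{2}\bigl(|Q_s|+|Q_e|\bigr)^{2}
\]
for every admissible $q$. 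Hence any outer minimizing sequence keeps $|Q_s^{(n)}|$ and $|Q_e^{(n)}|$ bounded; since $\mathcal S_1,\mathcal S_2$ are closed and the maps $\vec\alpha\mapsto Q_s,\vec\beta\mapsto Q_e$ are linear (so, after quotienting by their kernels if necessary, proper onto their images), a subsequence of $(\vec\alpha_n,\vec\beta_n)$ converges to some $(\vec\alpha_0,\vec\beta_0)\in\mathcal S$. The same bound controls $\|\mathcal P_{n_l}\|_{H^1}$ uniformly, so up to a further subsequence $\mathcal P_{n_l}$ converges uniformly to some $\mathcal P_0\in H^1([0,1],\chi)$ with the correct boundary values $Q_s(\vec\alpha_0)$, $Q_e(\vec\beta_0)$, and lower semicontinuity plus Fatou give $\mathcal A(\mathcal P_0)\le\liminf \mathcal A(\mathcal P_{n_l})=\inf_{\mathcal S}\inf_{P}\mathcal A$. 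The reverse inequality is immediate.

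The main obstacle is precisely the outer coercivity: without the assumption $V_1\cap V_2=\{0\}$ (or empty) the two boundary configurations could drift to infinity along a common direction while $|Q_s-Q_e|$ stays bounded, so that $\mathcal A$ would remain uncontrolled and neither boundary parameters nor the associated paths could be shown to have a convergent subsequence. The direct-sum inequality above is therefore the analytic heart of the argument, and the hypothesis that the boundaries of $\mathcal S_1,\mathcal S_2$ consist of finitely many lines (or are empty) is what guarantees that the limiting parameters $\vec\alpha_0,\vec\beta_0$ remain admissible.
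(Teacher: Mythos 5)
Your overall architecture (inner Bolza existence, outer coercivity from the intersection hypothesis, then compactness and lower semicontinuity) matches the paper's, and the inner step and the final passage to the limit are fine. But there is a genuine gap in the coercivity step, which you yourself single out as the analytic heart of the argument. You set $V_1=\{Q_s(\vec\alpha)\,:\,\vec\alpha\in\mathbb R^k\}$ and $V_2=\{Q_e(\vec\beta)\,:\,\vec\beta\in\mathbb R^s\}$ (the \emph{full} linear spans) and assert that $V_1+V_2$ is a direct sum, hence $|u-v|\ge C(|u|+|v|)$ for all $u\in V_1$, $v\in V_2$. The hypothesis, however, only states that the \emph{restricted} sets $\{Q_s\,:\,\vec\alpha\in\mathcal S_1\}$ and $\{Q_e\,:\,\vec\beta\in\mathcal S_2\}$ meet in $\{\vec 0\}$ or not at all; it says nothing about $V_1\cap V_2$. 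In the very application the theorem is written for (Section 3), the full spans intersect in a line: taking $a_2=-2a_1$ in $Q_{s_1}$ and $b_1=0$, $b_2=-3a_1$ in $Q_{e_1}$ produces the same configuration $(0,0),(3a_1,0),(-3a_1,0)$ for every $a_1\in\mathbb R$; only the constraint $a_1\ge 0$, $a_2\ge 0$ removes this common direction. So the inequality you invoke is false for the sets the theorem is actually applied to, and with it the bound $\mathcal A(q)\ge\tfrac{C^2}{2}(|Q_s|+|Q_e|)^2$.

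The repair --- and this is what the paper does --- is to prove the separation estimate only on the restricted closed sets, through their unbounded (asymptotic) directions: one shows that the supremum $K_0$ of $\cos(\vec u,\vec v)$ over unbounded directions $\vec u$ of $\{Q_s\,:\,\vec\alpha\in\mathcal S_1\}$ and $\vec v$ of $\{Q_e\,:\,\vec\beta\in\mathcal S_2\}$ satisfies $K_0<1$, because $K_0=1$ would allow one to scale a common direction into the intersection of the two restricted sets, contradicting the hypothesis; the closedness of $\mathcal S_1,\mathcal S_2$ and the assumption that their boundaries consist of at most finitely many lines are what make this supremum attained and the scaling legitimate. The estimate $|Q_s-Q_e|^2\ge(1-K_0^2)|Q_s|^2$ then plays the role of your direct-sum inequality, and the remainder of your argument goes through. (Note also that your coercivity discussion tacitly assumes both boundary configuration sets are unbounded; the case where one of the directions $\tilde q^{(n)}(0)/|\tilde q^{(n)}(0)|$ or $\tilde q^{(n)}(1)/|\tilde q^{(n)}(1)|$ stays in a bounded part of its set needs the separate, easier remark the paper makes.)
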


As an application of Theorem \ref{Thm1.1}, there exists an action minimizer $\mathcal{P}$, such that
\begin{equation}\label{thispapergoal}
\mathcal{A}(\mathcal{P})= \inf_{ \{ (a_1,\, a_2, \, b_1, \, b_2) \in  \mathcal{S}   \} }  \quad \inf_{\{ q(0)= Q_{s_1}, \, q(1)= Q_{e_1}, \, q(t) \in H^1([0, 1], \chi) \} }  \mathcal{A},
\end{equation}
where $m_1=m_2=m_3=1$, $\mathcal{A}=\int_{0}^{1} (K+U) \, dt$ is defined by \eqref{Action}, 
\begin{equation}\label{Henon1}
 Q_{s_1} = \begin{bmatrix}
-2a_1-a_2   &  0   \\
a_1-a_2  &  0     \\
a_1+2a_2   & 0   
\end{bmatrix}, \qquad \,\,
  Q_{e_1} =  \begin{bmatrix}
0   &  -2b_1    \\
-b_2 &  b_1 \\
b_2   &   b_1
\end{bmatrix},  
\end{equation}
and  $(a_1, \, a_2, \, b_1, \, b_2) \in  \mathcal{S}$, with 
\[\mathcal{S} = \left\{  a_1 \geq 0, \, \,  a_2 \geq 0, \, \,   b_1 \in \mathbb{R}, \, \,   b_2 \in \mathbb{R}  \right\}.  \]

Based on the celebrated works of Chenciner \cite{CA2} and Marchal \cite{Ma1}, the minimizer $\mathcal{P}$ is collision-free for $t \in (0,1)$. The challenge is to exclude possible boundary collisions in $\mathcal{P}$. A geometric argument is introduced to study the action minimizer $\mathcal{P}$ under the Jacobi coordinate system in Section \ref{5.2}. This geometric argument was first introduced in \cite{YK} to study the retrograde orbits and the prograde orbits in the parallelogram equal-mass four-body problem, which can be applied to show new geometric properties of the orbits in the planar three-body and four-body problem. Our main result is as follows, while its proof can be found in Theorem \ref{mincolschubart} and Lemma \ref{henonorbit1}. 
\begin{theorem}
The minimizer $\mathcal{P}$ in \eqref{thispapergoal} is either one part of the Schubart orbit or one part of the Broucke-H\'{e}non orbit.
\end{theorem}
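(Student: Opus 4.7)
The plan is to combine existence from Theorem~\ref{Thm1.1} with the Marchal-Chenciner interior regularity theorem, the natural boundary conditions derived from the four free boundary parameters $a_1,a_2,b_1,b_2$, and a case analysis keyed to whether $a_1=0$.

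Setup. The two linear subspaces $\{Q_{s_1}\mid (a_1,a_2)\in\mathbb{R}^2\}$ (collinear on the $x$-axis) and $\{Q_{e_1}\mid (b_1,b_2)\in\mathbb{R}^2\}$ (isosceles with body~$1$ on the $y$-axis) meet only at the origin, so Theorem~\ref{Thm1.1} applies and gives a minimizer $\mathcal{P}$ attaining the infimum in \eqref{thispapergoal}. By Marchal-Chenciner, $\mathcal{P}$ is collision-free on $(0,1)$, so any possible collision occurs at $t=0$ (loci $a_1=0$ for bodies $1,2$ or $a_2=0$ for bodies $2,3$) or at $t=1$ (locus $b_2=0$ for bodies $2,3$); the locus $b_1=0$ corresponds instead to the noncollisional collinear Euler configuration. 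Using $\delta\mathcal{A} = \sum_i m_i\dot q_i(1)\cdot\delta q_i(1)-\sum_i m_i\dot q_i(0)\cdot\delta q_i(0)$ on any extremal, the freedom of $b_1,b_2$ in $\mathbb{R}$ together with vanishing total momentum forces
\begin{equation*}
\dot y_1(1)=0,\quad \dot y_2(1)+\dot y_3(1)=0,\quad \dot x_2(1)=\dot x_3(1),\quad \dot x_1(1)=-2\dot x_2(1),
\end{equation*}
which is exactly the condition that the terminal velocity sits in the $-1$-eigenspace of the isosceles symmetry $(x,y)\mapsto(-x,y)$ combined with $2\leftrightarrow 3$. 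If moreover $a_1>0$ and $a_2>0$, the equalities $\partial_{a_1}\mathcal{A}=\partial_{a_2}\mathcal{A}=0$ combined with vanishing momentum force $\dot x_i(0)=0$ for $i=1,2,3$: the initial velocities are perpendicular to the collinear $x$-axis.

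Case analysis. When $a_1>0$ and $a_2>0$, both natural velocity conditions hold. Reflecting $\mathcal{P}$ across the $x$-axis at $t=0$ and across the $y$-axis (with $2\leftrightarrow 3$) at $t=1$, and iterating, extends $\mathcal{P}$ to a globally defined, smooth, periodic three-body solution realizing the full symmetry group of the Broucke-H\'enon orbit; minimality of $\mathcal{P}$ inside this symmetry class then identifies it with one segment of that orbit, which is the content of Lemma~\ref{henonorbit1}. When $a_1=0$, $\mathcal{P}$ begins with a binary collision of bodies $1,2$ on the $x$-axis, which is the Schubart initial configuration. We then want to show that $\mathcal{P}$ stays on the $x$-axis throughout $[0,1]$ and that $b_1=0$, so the terminal configuration is the Euler configuration; together these identify $\mathcal{P}$ with one quarter of the Schubart orbit, which is the content of Theorem~\ref{mincolschubart}. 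The case $a_2=0$ reduces to the previous one up to the body-permutation symmetry of $Q_{s_1}$, and $b_2=0$ is excluded by a standard Marchal-type local variation at the free terminal point.

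The main obstacle is the Schubart characterization in the case $a_1=0$. A priori the minimizer could leave the $x$-axis during $(0,1)$ and return to a genuinely noncollinear isosceles configuration at $t=1$; moreover, since $\{a_1=0\}$ is on the boundary of $\mathcal{S}$, the first-order condition $\partial_{a_1}\mathcal{A}=0$ degenerates into the inequality $\partial_{a_1}\mathcal{A}\ge 0$ and the usual variational argument can no longer enforce planarity from the initial velocity alone. This is precisely what the new geometric argument of Section~\ref{5.2} is designed to handle: working in Jacobi coordinates $\xi=q_2-q_1$ and $\eta=q_3-\tfrac12(q_1+q_2)$, one controls the angular behavior of $\eta$ along $[0,1]$ finely enough to construct an explicit action-decreasing deformation of any hypothetical noncollinear candidate, and the same estimate simultaneously forces $\eta(1)$ to be parallel to the $x$-axis, i.e.\ $b_1=0$. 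Quantifying the angular motion of $\eta$ sharply enough to produce a strict action inequality is the crux of the argument and the heart of the paper.
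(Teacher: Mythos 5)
Your overall architecture (existence via Theorem~\ref{Thm1.1}, interior regularity via Marchal--Chenciner, natural boundary conditions from the free parameters, extension to the Broucke--H\'enon orbit when no collision occurs, and the Jacobi-coordinate geometric argument when bodies $1,2$ collide) matches the paper. But there are two genuine gaps. The more serious one is your claim that the case $a_2=0$ (collision of bodies $2$ and $3$ at $t=0$) ``reduces to the previous one up to the body-permutation symmetry of $Q_{s_1}$.'' It does not: the terminal set $Q_{E_1}$ singles out body $1$ as the vertex on the $y$-axis, with bodies $2$ and $3$ as the symmetric pair, so no relabeling carries the problem ``colliding pair $\{2,3\}$ at $t=0$'' to the problem ``colliding pair $\{1,2\}$ at $t=0$''; the two boundary-value problems are genuinely different. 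The paper handles $a_2=0$ by a completely separate exclusion argument (Lemma~\ref{lowerbdd23collide}): it embeds the constrained problem into a larger one with boundary sets $Q_{s_3}$, $Q_{e_3}$ whose minimizer $\widetilde{\mathcal{P}}$ is collision-free, extends $\widetilde{\mathcal{P}}$ to a loop in the Long--Zhang symmetry class $\Omega$, concludes $\mathcal{A}(\widetilde{\mathcal{P}})=\tfrac14\mathcal{A}_{Lagrangian}\approx 4.216$, and contradicts the test-path upper bound $\mathcal{A}(\mathcal{P})<3.5383$. Without some such argument your case analysis is incomplete, and the symmetry shortcut you propose is unavailable. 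Relatedly, you never exclude total collisions at the endpoints (which lie in both boundary sets, e.g.\ $a_1=a_2=0$); the paper does this in Section~\ref{exclusiontotalcol} with the explicit test path \eqref{q1test}--\eqref{q2test} and the Gordon-type lower bound $\approx 6.69$, and that same test path is what powers the $a_2=0$ exclusion.

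The second gap is that the Schubart identification --- which you correctly call the crux --- is deferred rather than argued, and your gloss of the mechanism is not quite what works. The reflection $(Z_1,Z_2)\mapsto(\widetilde Z_1,\widetilde Z_2)$ yields only a \emph{non-strict} inequality $\mathcal{A}(Z_1,Z_2)\ge\mathcal{A}(\widetilde Z_1,\widetilde Z_2)$ (Lemma~\ref{comaprevalueofU}), so there is no ``explicit action-decreasing deformation of any noncollinear candidate.'' The paper instead analyzes the equality case: equality forces $Z_1(t)$ and $Z_2(t)$ into adjacent closed quadrants for all $t$, invariant-set and uniqueness arguments for the ODE (Propositions~\ref{z1z2sametimetangent}, \ref{z1t1t2equivonaxes}, Lemma~\ref{nocrossinginbetween01}) rule out crossings of the axes, and collinearity is finally forced by the asymptotic expansion at the collision (Lemma~\ref{asympvec}, giving $\dot q_{1y}(0)=\dot q_{2y}(0)$) combined with a sign analysis of $\ddot q_{1y}-\ddot q_{2y}$ via the double-integral identity \eqref{doubledotqident}, which shows $\dot q_{1y}(0)=\dot q_{2y}(0)=0$ and hence, by Yu's lemma, collinear motion. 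Your plan would need all of these ingredients spelled out; as written it asserts the conclusion of Theorem~\ref{mincolschubart} without a mechanism that could deliver it.
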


\begin{remark}
Numerically, we can calculate the action value of $\mathcal{P}$ in both orbits: the Schubart orbit and the Broucke-H\'{e}non orbit. If $\mathcal{P}$ is one part of the Schubart orbit, its action value $\mathcal{A}_{10}$ is
\[\mathcal{A}_{10}  \approx 3.43.  \]
If $\mathcal{P}$ is one part of the Broucke-H\'{e}non orbit, its action value $\mathcal{A}_{20}$ is 
\[\mathcal{A}_{20} \approx  3.46. \]
Therefore, numerical evidence implies that the action minimizer $\mathcal{P}$ of \eqref{thispapergoal} actually coincides with the Schubart orbit. 
\end{remark} 

The paper is organized as follows. Section \ref{coercivityresult} introduces a general result of coercivity. Section \ref{exclusiontotalcol} excludes the total collision in the minimizer $\mathcal{P}$ of \eqref{thispapergoal}. Section \ref{revisitkepler} excludes possible binary collisions of $\mathcal{P}$ at $t=1$ and studies the behavior of binary collisions in $\mathcal{P}$ at $t=0$. Section \ref{binarycollisionat0} shows that $\mathcal{P}$ must coincide with the Schubart orbit in Fig. \ref{Schubartorbit} if $\mathcal{P}$ has collision singularities.  In the last section, we show that if $\mathcal{P}$ has no collision, it can be extended to a periodic orbit with $D_2$ symmetry.

\section{Coercivity under general boundary settings}\label{coercivityresult}
In this section, we prove Theorem \ref{Thm1.1} of the coercivity of the Lagrangian action functional in the N-body problem. Actually, similar coercivity results can be found in \cite{Chen2, Chen3, Fusco}.

Note that $L=K+U \geq 0$, hence there exists some $M_0 \geq 0$, such that 
\[\inf_{ (\vec{\alpha}, \, \vec{\beta}) \in  \mathcal{S}  }  \quad \inf_{ q(t) \in P(Q_s, Q_e) }   \mathcal{A} =M_0,\]
where $\vec{\alpha}=(a_1, \dots, a_k)$ and $\vec{\beta}=(b_1, \dots, b_s)$. The proof follows by the Arzel\`{a}-Ascoli theorem. Basically, we can find a sequence $\{\mathcal{P}_{n}\}$, such that the action of the sequence $\mathcal{A}(\mathcal{P}_{n})$ approaches $M_0$. Then we show the uniform boundedness and equicontinuity of the sequence. Hence, by the Arzel\`{a}-Ascoli theorem, there is a subsequence $ \{ \mathcal{P}_{n_l} \}$ which converges uniformly to a minimizer $\mathcal{P}_0$.  

Note that there exist sequences $\{a_{{i}_{n}}\}$ and $\{b_{{j}_{n}}\}$, such that the minimum action value $M_0$ can be reached by a path sequence $\mathcal{P}_{n} \in H^1([0, 1], \chi)$, which satisfies 
\[ \mathcal{A}(\mathcal{P}_n)= \inf_{\{ q(0)= Q_s, \, q(1)= Q_e, \, q(t) \in H^1([0, 1], \chi), \, a_i=a_{{i}_{n}}, \, b_j=b_{{j}_{n}}, (i=1, \dots, k; j=1, \dots, s) \} }  \mathcal{A}, \]
and $ \mathcal{A}(\mathcal{P}_n) \in [ M_0, \, M_0+ \frac{1}{2n}]$. It is clear that $\mathcal{A}(\mathcal{P}_n) \in [ M_0, \, M_0+1]$ for all $n$. Next, we show the path sequence $\{\mathcal{P}_n\}$ is uniformly bounded. 

We rewrite $Q_s$ and $Q_e$ as $dN \times 1$ vectors:
\[\widetilde{Q_s}= \begin{bmatrix}
q_1^{T}(\vec{\alpha}) \\
\dots \\
q_{N}^{T}(\vec{\alpha}) 
\end{bmatrix}, \qquad  \widetilde{Q_e} = \begin{bmatrix}
q_1^{T}(\vec{\beta}) \\
\dots \\
q_{N}^{T}(\vec{\beta}) 
\end{bmatrix}.\]
By assumptions, it follows that both $\left\{Q_s \, \big| \, \vec{\alpha}  \in  \mathcal{S}_1 \right\} $ and $\left\{Q_e \,  \big| \, \vec{\beta}  \in  \mathcal{S}_2 \right\} $ are closed and they have either finitely many lines as its boundary or no boundary. Furthermore, they satisfy
\[ \left\{Q_s \, \big|\,  \vec{\alpha}  \in  \mathcal{S}_1 \right\}  \cap\left\{Q_e \, \big| \, \vec{\beta}  \in  \mathcal{S}_2 \right\}= \{\vec{0} \} \, \, \text{or} \,  \, \varnothing. \]
Note that $\left\{\widetilde{Q_s} \, \big|\,  \vec{\alpha} \in  \mathcal{S}_1 \right\}$ is a closed subset of a $k$-dimensional linear space $U_k= \left\{\widetilde{Q_s} \, \big| \, \vec{\alpha} \in  \mathbb{R}^k \right\}$ and  $\left\{\widetilde{Q_e}  \, \big| \, \vec{\beta} \in  \mathcal{S}_2 \right\}$ is a closed subset of a $s$-dimensional linear space $V_s=\left\{\widetilde{Q_e}  \, \big|\,  \vec{\beta} \in \mathbb{R}^s  \right\}$. Let $\{u_1, \dots, u_k\}$ be an orthonormal basis of $U_k$ and $\{v_1, \dots, v_s\}$ be an orthonormal basis of $V_s$. 

If both $\left\{\widetilde{Q_s} \, \big| \,  \vec{\alpha} \in  \mathcal{S}_1 \right\}$ and  $\left\{\widetilde{Q_e}  \, \big|\,  \vec{\beta}  \in  \mathcal{S}_2 \right\}$ are unbounded, we can choose nonzero vectors \[\vec{u} \in  \left\{\widetilde{Q_s} \, \big|\,  \vec{\alpha} \in  \mathcal{S}_1 \right\}, \quad \vec{v} \in  \left\{\widetilde{Q_e} \, \big|\,  \vec{\beta} \in  \mathcal{S}_2 \right\},\] such that $k\vec{u} \in  \left\{\widetilde{Q_s} \, \big|\,  \vec{\alpha} \in  \mathcal{S}_1 \right\}$ and $k \vec{v} \in  \left\{\widetilde{Q_e} \, \big| \, \vec{\beta} \in  \mathcal{S}_2 \right\}$ for any $k>k_1$ with $k_1$ big enough. For the vectors $\vec{u}$ and $\vec{v}$, there exist constants $g_i, \, h_j \, (1\leq i \leq k, \, 1 \leq j \leq s)$, such that
\[ \frac{\vec{u}}{|\vec{u}|}= g_1 u_1+ \dots + g_k u_k, \qquad \sum_{i=1}^k g_i^2=1, \]
\[  \frac{\vec{v}}{|\vec{v}|}= h_1 v_1+ \dots + h_s v_s, \qquad \sum_{j=1}^s h_j^2=1. \]
Note that $g_i$ and $h_j \, (1\leq i \leq k, \, 1 \leq j \leq s)$ satisfy $\displaystyle \sum_{i=1}^k g_i^2=\sum_{j=1}^s h_j^2=1$. So they are on a compact set. Since both $ \left\{\widetilde{Q_s} \, \big|\,  \vec{\alpha} \in  \mathcal{S}_1 \right\}$ and $\left\{\widetilde{Q_e} \, \big| \,  \vec{\beta} \in  \mathcal{S}_2 \right\}$ are closed, it follows that the inner product of $\frac{\vec{u}}{|\vec{u}|}$ and $\frac{\vec{v}}{|\vec{v}|}$:
\[ <\frac{\vec{u}}{|\vec{u}|}, \, \, \frac{\vec{v}}{|\vec{v}|}> = \sum_{1\leq i \leq k, \, 1 \leq j \leq s} g_i h_j <u_i, v_j>= \cos (\vec{u}, \vec{v}) \]
can reach its maximum $K_0$. By the assumptions on the boundaries of $\mathcal{S}_i \, (i=1,2)$, it follows that $K_0$ is determined only by the sets $\mathcal{S}_1$ and $\mathcal{S}_2$. If $K_0=1$, there exist two vectors $\vec{u} \in \left\{\widetilde{Q_s} \, \big| \, \vec{\alpha} \in  \mathcal{S}_1 \right\}$ and $\vec{v} \in \left\{\widetilde{Q_e} \, \big|\,  \vec{\beta} \in  \mathcal{S}_2 \right\}$, such that $\displaystyle \frac{\vec{u}}{|\vec{u}|}= \frac{\vec{v}}{|\vec{v}|}$.  By assumption, it follows that there exists big enough $k_2>0$, such that
\[ k_2 \vec{u} \in \left\{\widetilde{Q_s} \, \big|\,  \vec{\alpha} \in  \mathcal{S}_1 \right\} \cap \left\{\widetilde{Q_e} \, \big| \,  \vec{\beta} \in  \mathcal{S}_2 \right\}.  \]
Contradiction! Hence, $K_0 <1$. It implies that for any two unbounded directions \[\vec{u} \in  \left\{\widetilde{Q_s} \, \big| \,  \vec{\alpha} \in  \mathcal{S}_1 \right\}, \quad \vec{v} \in  \left\{\widetilde{Q_e} \, \big| \,  \vec{\beta} \in  \mathcal{S}_2 \right\},\] 
the angle between them is strictly less than $\pi$.

On the other hand, $\mathcal{A}(\mathcal{P}_n)\leq M_0+1$. If $0 \leq t_1 < t_2 \leq 1$, we have
\begin{equation*}
\frac{m_j |q_{j}^{n}(t_2)- q_{j}^{n}(t_1)|^2}{2d(t_2-t_1)} \leq \int_{t_1}^{t_2} \frac{m_j |\dot{q}_j(t)|^2}{2} \, dt \leq  \mathcal{A}(\mathcal{P}_n)\leq M_0+1. \end{equation*}
It impies that for any $1\leq j \leq N$ and any $t_1$, $t_2$ satisfying $0 \leq t_1 < t_2 \leq 1$,
\begin{equation}\label{ineqalqq}
|q_j^{n}(t_2)- q_j^{n}(t_1)| \leq \sqrt{\frac{2d(t_2-t_1)(M_0+1)}{m_j}}.\end{equation}
Let $m^{*}=min\{m_1, m_2, \cdots, m_N\}$. Then for all $1 \leq j \leq N$, and any $t_1$, $t_2$ satisfying $0 \leq t_1 < t_2 \leq 1$, 
\[ |q_j^{n}(t_2)- q_j^{n}(t_1)| \leq \sqrt{\frac{2d(M_0+1)}{m^{*}}}. \]
In each $\mathcal{P}_{n}$, its element $q^{(n)}(t)=\begin{bmatrix}
q_1^{(n)}(\vec{\alpha}) \\
\dots \\
q_N^{(n)}(\vec{\alpha}) 
\end{bmatrix}$ can be rewritten as  $\tilde{q}^{(n)}(t)= \begin{bmatrix}
q_{1}^{(n)}(\vec{\alpha})^{T} \\
\dots \\
q_{N}^{(n)}(\vec{\alpha})^{T} 
\end{bmatrix}$ . Then for any $t \in [0, 1]$, 
\begin{equation}\label{boundq}
 |\tilde{q}^{(n)}(0)-\tilde{q}^{(n)}(t)| \leq  N\sqrt{\frac{2d(M_0+1)}{m^{*}}}. 
 \end{equation}
 The uniform boundedness is discussed in two cases. If both  $\frac{\tilde{q}^{(n)}(0)}{|\tilde{q}^{(n)}(0)|}$ and $\frac{\tilde{q}^{(n)}(1)}{|\tilde{q}^{(n)}(1)|}$ are unbounded directions in the corresponding boundary configuration sets, the assumption implies that the two vectors satisfy 
 \[k \frac{\tilde{q}^{(n)}(0)}{|\tilde{q}^{(n)}(0)|} \in  \left\{\widetilde{Q_s} \, \big| \, \vec{\alpha} \in  \mathcal{S}_1 \right\}, \quad k\frac{\tilde{q}^{(n)}(1)}{|\tilde{q}^{(n)}(1)|} \in  \left\{\widetilde{Q_e} \, \big| \,  \vec{\beta} \in  \mathcal{S}_2 \right\}\] for all $k>k_0$ with $k_0$ big enough. It follows that
\begin{align*}
\frac{2dN^2(M_0+1)}{m^{*}}  & \geq  |\tilde{q}^{(n)}(0)-\tilde{q}^{(n)}(1)|^2\\
& = |\tilde{q}^{(n)}(0)|^2+|\tilde{q}^{(n)}(1)|^2 -2 |\tilde{q}^{(n)}(0)| |\tilde{q}^{(n)}(1)| \cos \left(\tilde{q}^{(n)}(0), \, \tilde{q}^{(n)}(1) \right)\\
&\geq  |\tilde{q}^{(n)}(0)|^2+|\tilde{q}^{(n)}(1)|^2 -2 K_0 |\tilde{q}^{(n)}(0)||\tilde{q}^{(n)}(1)| \\
&= \left[ K_0 |\tilde{q}^{(n)}(0)|  -  |\tilde{q}^{(n)}(1)|\right]^2 + (1-K_0^2) |\tilde{q}^{(n)}(0)|^2\\
&\geq  (1-K_0^2) |\tilde{q}^{(n)}(0)|^2.
\end{align*}
Hence
\begin{equation}\label{boundq0}
|\tilde{q}^{(n)}(0)|  \leq \sqrt{\frac{2dN^2(M_0+1)}{m^{*}(1-K_0^2)}}. 
\end{equation}
By inequalities \eqref{boundq} and \eqref{boundq0}, it follows that for any $t \in [0, 1]$,
\begin{equation}\label{unifbound}
|\tilde{q}^{(n)}(t)|  \leq  |\tilde{q}^{(n)}(0)-\tilde{q}^{(n)}(t)| + |\tilde{q}^{(n)}(0)|  \leq N\sqrt{\frac{2d(M_0+1)}{m^{*}}}+ N \sqrt{\frac{2d(M_0+1)}{m^{*}(1-K_0^2)}},
\end{equation}
which is a uniform bound for $|\tilde{q}^{(n)}(t)| $. 

The other case is that one of the directions $\frac{\tilde{q}^{(n)}(0)}{|\tilde{q}^{(n)}(0)|}$ or $\frac{\tilde{q}^{(n)}(1)}{|\tilde{q}^{(n)}(1)|}$ stays bounded in its corresponding configuration subset. Without loss of generality, we assume that $\frac{\tilde{q}^{(n)}(0)}{|\tilde{q}^{(n)}(0)|}$ is a bounded direction in $ \left\{\widetilde{Q_s} \, \big| \, \vec{\alpha} \in  \mathcal{S}_1 \right\}$ as $n \to +\infty$. By assumptions on the boundary sets, there exists some $K_1>0$ determined only by the set $\left\{\widetilde{Q_s} \, \big| \, \vec{\alpha} \in  \mathcal{S}_1 \right\}$, such that $|\tilde{q}^{(n)}(0)|\leq K_1$. By inequality \eqref{boundq}, $\{\tilde{q}^{(n)}(t)\}$ is uniformly bounded. Therefore, the path sequence $\{\mathcal{P}_n= \mathcal{P}_n(t)\}$ is uniformly bounded.

Next, we show  the path sequence $\{ \mathcal{P}_n= \mathcal{P}_n(t) \}$ is equi-continuous. In fact, by inequality \eqref{ineqalqq},
\[ |q_j^{n}(t_2)- q_j^{n}(t_1)| \leq \sqrt{\frac{2d(M_0+1)}{m^{*}}} |t_2-t_1|^{1/2}.\]
  Then for any $\varepsilon>0$,  let $\delta= \frac{\varepsilon^2 m^{*}}{6(M_0+1)}$. Whenever $|t_2-t_1|\leq \delta$, the following inequality holds:
  \[ |q_j^{n}(t_2)- q_j^{n}(t_1)|  \leq \sqrt{\frac{2d(M_0+1)}{m^{*}}} |t_2-t_1|^{1/2} =\varepsilon.\]
 It implies that for each $j \in [1, N]$, $\{q_j^{n}(t)\}$ is equi-continuous. It follows that the path sequence $\{\mathcal{P}_n=\mathcal{P}_n(q(t))\}$ is equi-continuous. 
  
By the Arzel\`{a}-Ascoli theorem, there exists a subsequence $\{\mathcal{P}_{n_l}\}$ which converges uniformly. The limit $\mathcal{P}_0= \mathcal{P}_0(q(t))$ is in $H^1([0,1], \chi)$ and it satisfies
 \[ \lim_{n_l \to \infty}  \mathcal{P}_{n_l}(t)= \mathcal{P}_0(t), \qquad \text{for all \, } t \in [0,1]. \] 
 In particular,
 \[ \lim_{n_l \to \infty}  \mathcal{P}_{n_l}(0)= \mathcal{P}_0(0), \qquad   \lim_{n_l \to \infty}  \mathcal{P}_{n_l}(1)= \mathcal{P}_0(1).\]
It follows that 
\[ \lim_{n_l \to \infty} a_{{i}_{n_l}} = a_{{i}_{0}}, \qquad  \lim_{n_l \to \infty} b_{{j}_{n_l}} = b_{{j}_{0}}, \quad i=1, \dots, k; \, \,  j=1, \dots, s.\] 
And $\mathcal{P}_0$ satisfies
\[\mathcal{A}(\mathcal{P}_0) = \inf_{  (\vec{\alpha}, \vec{\beta}) \in  \mathcal{S}  } \quad \inf_{ q \in P(Q_s,Q_e) }  \mathcal{A}= \inf_{q \in P(Q_{s_{0}},Q_{e_{0}}) }  \mathcal{A},\]
where $\vec{\alpha}_{0}= (a_{{1}_{0}},\dots, a_{{k}_{0}})$ and $\vec{\beta}_{0}= (b_{{1}_{0}},\dots, b_{{s}_{0}})$. The proof is complete. 

\section{Exclusion of total collisions}\label{exclusiontotalcol}
 In what follows, we concentrate on the minimizing problem
\begin{equation}\label{thispapergoal01}
\inf_{ \{ (a_1,\, a_2, \, b_1, \, b_2) \in  \mathcal{S}  \} }  \quad \inf_{\{ q(0)= Q_{s_1}, \, q(1)= Q_{e_1}, \, q(t) \in H^1([0, 1], \chi) \} }  \mathcal{A},
\end{equation}
where $\mathcal{A}=\int_{0}^{1} (K+U) \, dt$ is defined by \eqref{Action}, 
\begin{equation}\label{Henon101}
 Q_{s_1} = \begin{bmatrix}
-2a_1-a_2   &  0   \\
a_1-a_2  &  0     \\
a_1+2a_2   & 0   
\end{bmatrix}, \qquad \,\,
  Q_{e_1} =  \begin{bmatrix}
0   &  -2b_1    \\
-b_2 &  b_1 \\
b_2   &   b_1
\end{bmatrix},  
\end{equation}
and  $(a_1, \, a_2, \, b_1, \, b_2) \in  \mathcal{S}$, with 
\begin{equation}\label{defofs}
\mathcal{S} = \left\{  a_1 \geq 0, \,\,a_2 \geq 0, \,\,b_1 \in \mathbb{R},\,\, b_2 \in \mathbb{R}  \right\}.  
\end{equation}
By Theorem \ref{Thm1.1}, there exists an action minimizer $\mathcal{P}$, such that
\begin{equation}\label{thispapergoal02}
\mathcal{A}\left( \mathcal{P}\right)=\inf_{ \{ (a_1,\, a_2, \, b_1, \, b_2) \in  \mathcal{S}_1   \} }  \quad \inf_{\{ q(0)= Q_{s_1}, \, q(1)= Q_{e_1}, \, q(t) \in H^1([0, 1], \chi) \} }  \mathcal{A}.
\end{equation}
 Based on the celebrated works of Marchal \cite{Ma1} and Chenciner \cite{CA2}, the main challenge in the existence proof is to exclude possible boundary collisions in $\mathcal{P}$. 

Let $q_i= (q_{ix}, q_{iy}) \, (i=1,2,3)$. By the definition of $Q_{s_1}$, $Q_{e_1}$ in \eqref{Henon101} and $\mathcal{S}$  in \eqref{defofs}, it follows that at $t=0$, the three bodies are on the $x$-axis and satisfy the order $q_{1x}(0) \leq q_{2x}(0) \leq q_{3x}(0)$. The possible collisions at $t=0$ are a binary collision between bodies 1 and 2, a binary collision between bodies 2 and 3 and a total collision. The possible collisions at $t=1$ are a binary collision between bodies 2 and 3 and a total collision. In this section, we define a test path to exclude possible total collisions on both boundaries.

We first find a lower bound of actions for paths with a total collision. We consider the collinear Kepler problem: $\frac{d^2  \gamma}{d t^2} = - \lambda \gamma^{-2}$, where $\lambda>0$ is a constant. There is a unique solution such that $\gamma_{\lambda, 1}(0)= \dot{\gamma}_{\lambda, 1}(1) =0$ and $\dot{\gamma}_{\lambda, 1}(t)>0$ for any $t \in (0, 1)$. This is a degenerate Kepler motion with period $2$. It is known \cite{Gordon} that $\gamma_{\lambda, 1}$ minimizes the action functional for the collinear Kepler motion
\[\int_{0}^{1} \frac{1}{2} \dot{\gamma}^2 + \frac{\lambda}{\gamma} \, dt\]
on $\left\{ \gamma \in H^1([0, 1], \mathbb{R})\,  | \,  \gamma(t)=0 \, \, \text{for some} \, \,  t \in [0, 1] \right\}$. And the minimum action value is
\[ \frac{3}{2} \pi^{2/3} \lambda^{2/3}.  \]
Note that the central configuration of three equal masses can be either a collinear configuration or an equilateral triangle configuration. In the case of collinear configuration,  let $q_1(t)= \gamma(t)$, $q_2(t)=0$ and $q_3(t) = -\gamma(t)$. It follows that
\[ \mathcal{A}_{collinear}=\int_{0}^{1} (K+U) \, dt=   \int_0^1 \dot{\gamma}^2 + \frac{5}{2\gamma}dt= 2  \int_0^1 \frac12\dot{\gamma}^2 + \frac{5}{4\gamma}dt. \]
Hence,
\begin{equation}\label{lowerbddcol}
\mathcal{A}_{collinear} \geq 2 \frac{3}{2} \pi^{\frac{2}{3}}  \left(\frac{5}{4}\right)^{\frac{2}{3}} \approx 7.4672.
\end{equation}
In the case of equilateral triangle configuration, we can set $q_1(t) =\gamma(t)$, $q_2(t) = \gamma(t) R(2\pi/3)$ and $q_3(t) = \gamma(t) R(-2\pi/3)$. It follows that 
\[ \mathcal{A}_{triangle}=\int_{0}^{1} (K+U) \, dt=   \int_0^1 \frac{3}{2}\dot{\gamma}^2 + \frac{3}{ \sqrt{3} \gamma}dt= 3  \int_0^1 \frac12\dot{\gamma}^2 + \frac{1}{\sqrt{3}\gamma}dt. \]
Hence,  
\begin{equation}\label{lowerbddtri}
\mathcal{A}_{triangle} \geq 3 \frac{3}{2} \pi^{\frac{2}{3}}  \left(\frac{1}{\sqrt{3}}\right)^{\frac{2}{3}} \approx 6.6927.
\end{equation}
Inequalities \eqref{lowerbddcol} and \eqref{lowerbddtri} imply that the lower bound of action for paths with total collision is about 6.6927. Therefore, if we can find a test path $ \mathcal{P}_{test} \in H^1([0,1], \chi)$ with action strictly less than $6.69$, then there is no total collision in the minimizer $\mathcal{P}$ of \eqref{thispapergoal01}.

By \cite{Ven1}, a piece of the Schubart orbit can be characterized as an action minimizer between two collinear configurations: 
\[q(0)=Q_{s_2}=  \begin{bmatrix}
-c_1     \\
-c_1     \\
2c_1    
\end{bmatrix}, \qquad \,\,
  q(1)=Q_{e_2} =  \begin{bmatrix}
0     \\
-d_1  \\
d_1   
\end{bmatrix},  \]
with $(c_1, \, d_1) \in  \mathbb{R}^2$.  Our test path is defined by a 1-dimensional path connecting $Q_{s_2}$ and $Q_{e_2}$, which can be seen as an approximation of the Schubart orbit. The definition of the test path $\hat{q}(t)= \begin{bmatrix}
\hat{q_{1x}}(t)  &  0\\
\hat{q_{2x}}(t)   &  0\\
\hat{q_{3x}}(t)  &  0
\end{bmatrix}$ is as follows:
\begin{eqnarray}\label{q1test}
\hat{q}_{1x}(t)=
\begin{cases}
-\frac{9}{10}+ t^{\frac{2}{3}},       & \text{if} \quad    0 \leq t  \leq \frac{1}{8}, \\
\frac{26}{35}t-   \frac{26}{35},          & \text{if} \quad\frac18 \leq t \leq 1,
\end{cases}
\end{eqnarray}
\begin{eqnarray}\label{q2test}
\hat{q}_{2x}(t)=
\begin{cases}
-\frac{9}{10}- t^{\frac{2}{3}},       &  \text{if} \quad  0 \leq t  \leq \frac{1}{8}, \\
-\frac{26}{35}t-   \frac{37}{35},          &\text{if} \quad  \frac18 \leq t \leq 1,
\end{cases}
\end{eqnarray}
and $\hat{q}_{3x}(t) \equiv \frac{9}{5}$.  The path $\hat{q} \in H^1([0,1], \chi)$ and it satisfies the boundary conditions 
\[ \hat{q}(0) \in  \left\{Q_{s_1} \, \big| \,  (a_1, \, a_2, \, b_1, \, b_2)  \in  \mathcal{S} \right\}, \quad  \hat{q}(1) \in \left\{Q_{e_1} \, \big| \,  (a_1, \, a_2, \, b_1, \, b_2)  \in  \mathcal{S} \right\}. \]

\begin{lemma}\label{testpathact}
Assume $\hat{q}(t) \in H^1([0,1], \chi)$ is defined by \eqref{q1test} and \eqref{q2test}, then
\[\mathcal{A}(\hat{q}(t)) < 3.5383.\]
\end{lemma}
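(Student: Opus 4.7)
The plan is to compute $\int_0^1(K+U)\,dt$ directly by splitting at $t=1/8$, exploiting the fact that $\hat{q}(t)$ is one-dimensional with the third body at rest. Since $m_1=m_2=m_3=1$, the integrand reduces to
$$K+U=\tfrac{1}{2}\bigl(\dot{\hat{q}}_{1x}^{\,2}+\dot{\hat{q}}_{2x}^{\,2}\bigr)+\frac{1}{|\hat{q}_{1x}-\hat{q}_{2x}|}+\frac{1}{|\hat{q}_{1x}-\hat{q}_{3x}|}+\frac{1}{|\hat{q}_{2x}-\hat{q}_{3x}|},$$
so every term of the action has an elementary antiderivative on each subinterval.

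On the singular piece $[0,1/8]$, I would use $\dot{\hat{q}}_{1x}=-\dot{\hat{q}}_{2x}=\tfrac{2}{3}t^{-1/3}$ together with the distance formulas $|\hat{q}_{1x}-\hat{q}_{2x}|=2t^{2/3}$ and $|\hat{q}_{1x,2x}-\hat{q}_{3x}|=\tfrac{27}{10}\mp t^{2/3}$. The kinetic integral evaluates to exactly $\tfrac{2}{3}$, and the close-pair potential contributes exactly $\tfrac{3}{4}$. For the two remaining potentials, the substitution $u=t^{1/3}$ (sending $[0,1/8]$ to $[0,1/2]$) converts them into $\int_0^{1/2}3u^2/(27/10\mp u^2)\,du$; after polynomial division these evaluate in closed form as a combination of $\arctan\!\bigl(\tfrac{1/2}{\sqrt{27/10}}\bigr)$ and $\ln\!\bigl(\tfrac{\sqrt{27/10}+1/2}{\sqrt{27/10}-1/2}\bigr)$.

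On the linear piece $[1/8,1]$ the velocities are constants $\pm 26/35$, so the kinetic contribution is exactly $\tfrac{7}{8}(26/35)^2$. All three distances are linear in $t$:
$$|\hat{q}_{1x}-\hat{q}_{2x}|=\tfrac{52t+11}{35},\qquad |\hat{q}_{1x}-\hat{q}_{3x}|=\tfrac{89-26t}{35},\qquad |\hat{q}_{2x}-\hat{q}_{3x}|=\tfrac{26t+100}{35},$$
so the three potential integrals evaluate respectively to $\tfrac{35}{52}\ln(18/5)$, $\tfrac{35}{26}\ln(49/36)$, and $\tfrac{35}{26}\ln(72/59)$.

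Summing the six explicit contributions and bounding the resulting numerical expression above by rational approximations of the logarithms and arctangent yields the desired strict inequality. The main obstacle is that the target bound $3.5383$ is rather tight (the exact value of $\mathcal{A}(\hat{q})$ is approximately $3.5378$), so cruder pointwise estimates on $[0,1/8]$—for instance, replacing $27/10\mp t^{2/3}$ by its extremal value to bound the cross potentials—overshoot the target and must be avoided. The closed-form $\arctan/\ln$ evaluation of the two cross-potential terms on $[0,1/8]$ is therefore the one point where the estimate cannot be weakened; every other ingredient reduces to an exact rational or logarithmic expression whose numerical verification is routine.
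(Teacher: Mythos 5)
Your computation is correct and follows essentially the same route as the paper: split the action at $t=1/8$, evaluate the kinetic and close-pair contributions exactly ($\tfrac23$ and $\tfrac34$ on the first piece, $\tfrac78(26/35)^2$ on the second), and integrate the three linear-distance potentials on $[1/8,1]$ with the same logarithmic antiderivatives you list. The only divergence is in the two cross-potential terms over $[0,1/8]$: you evaluate them exactly via $u=t^{1/3}$, whereas the paper combines them into $\frac{5.4}{(2.7)^2-t^{4/3}}$ and bounds this increasing integrand by its value at $t=1/8$, which yields $\mathcal{A}_1<1.5101$ and a total $<3.5383$ --- so your claim that a pointwise estimate here necessarily overshoots the target is not quite accurate (only the cruder version that bounds the two denominators $2.7\mp t^{2/3}$ separately by their extremal values fails; the combined bound succeeds, making the closed-form $\arctan/\ln$ evaluation a convenience rather than a necessity).
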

\begin{proof}
The action of $\hat{q} \in H^1([0,1], \chi)$ is calculated in two parts. When $t \in [0, 1/8]$, the action of $\hat{q}(t)$ is
\begin{eqnarray}\label{A1est}
\mathcal{A}_1&=& \int_0^{1/8}  L(\dot{\hat{q}}, \hat{q})dt \\ \nonumber
&=& \int_0^{0.125} \frac{4}{9} t^{-2/3} + \frac{1}{2 t^{2/3}} + \frac{1}{2.7- t^{2/3}}+  \frac{1}{2.7+ t^{2/3}} dt\\  \nonumber
& < & \frac{3}{2} \left( \frac{4}{9}+ \frac{1}{2}\right) + 0.125 \times \frac{5.4}{ 2.7^2- (0.125)^{4/3}} \\  \nonumber
&\approx& 1.5100.
\end{eqnarray}
When $t \in [1/8, 1]$, the aciton of $\hat{q}(t)$ is
\begin{eqnarray}\label{A2est}
&&\mathcal{A}_2= \int_{1/8}^1  L(\dot{\hat{q}}, \hat{q})dt \\  \nonumber
&=& \frac{7}{8}\left(\frac{26}{35}\right)^2 + \int_{0.125}^1 \frac{35}{52t+11} + \frac{35}{26t+100}+ \frac{35}{ 89-26t} dt\\   \nonumber
&=&   \frac{7}{8}\left(\frac{26}{35}\right)^2 + \frac{35}{52} \ln(52t+11)\bigg|_{0.125}^1+ \frac{35}{26} \ln (26t+100) \bigg|_{0.125}^1 - \frac{35}{26} \ln(89-26t) \bigg|_{0.125}^1\\   \nonumber
&\approx& 2.0281.
\end{eqnarray}
Therefore, by equations \eqref{A1est} and \eqref{A2est}, the action of $\hat{q}(t) \, (t \in [0,1])$ satisfies
\[  \mathcal{A}(\hat{q}(t)) = \mathcal{A}_1+  \mathcal{A}_2 < 1.5101+2.0282=3.5383. \]
The proof is complete.
\end{proof}

\begin{corollary}\label{boundforAP}
The minimizing path $\mathcal{P}$ has no triple collision and 
\[  \mathcal{A}(\mathcal{P}) < 3.5383. \]
\end{corollary}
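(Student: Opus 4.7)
The plan is to derive the corollary as an immediate consequence of the two ingredients already assembled in the section: the lower bounds \eqref{lowerbddcol} and \eqref{lowerbddtri} on the action of any path in $H^1([0,1],\chi)$ that suffers a total collision, and the upper bound on the action of the test path $\hat{q}(t)$ furnished by Lemma \ref{testpathact}. No further analysis is needed beyond verifying that $\hat{q}$ is admissible for the minimization problem \eqref{thispapergoal01}.

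First I would check admissibility of $\hat{q}$. By inspection of \eqref{q1test}--\eqref{q2test} together with $\hat{q}_{3x}\equiv \tfrac{9}{5}$, the three coordinates satisfy $\hat{q}_{1x}(t)+\hat{q}_{2x}(t)+\hat{q}_{3x}(t)=0$, so $\hat{q}\in H^1([0,1],\chi)$. At $t=0$, $\hat{q}_{1x}(0)=\hat{q}_{2x}(0)=-\tfrac{9}{10}$, $\hat{q}_{3x}(0)=\tfrac{9}{5}$, which fits $Q_{s_1}$ in \eqref{Henon101} with $a_1=0$, $a_2=\tfrac{9}{10}\ge 0$. At $t=1$, $\hat{q}_{1x}(1)=0$, $\hat{q}_{2x}(1)=-\tfrac{9}{5}$, $\hat{q}_{3x}(1)=\tfrac{9}{5}$, and all $y$-coordinates vanish, which matches $Q_{e_1}$ with $b_1=0$, $b_2=\tfrac{9}{5}$. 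Hence $\hat{q}$ is a competitor for the double infimum in \eqref{thispapergoal01}, so
\begin{equation*}
\mathcal{A}(\mathcal{P})\;\le\;\mathcal{A}(\hat{q})\;<\;3.5383,
\end{equation*}
using Lemma \ref{testpathact} for the second inequality.

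Next I would rule out total collision. By the discussion preceding the corollary, the only central configurations of three equal masses are the Euler collinear and Lagrange equilateral configurations, and the Sundman--Gordon lower bound for the action on any path in $H^1([0,1],\chi)$ that experiences a total collision is at least the smaller of the two constants in \eqref{lowerbddcol} and \eqref{lowerbddtri}, namely $\approx 6.6927$. Since $\mathcal{A}(\mathcal{P})<3.5383<6.6927$, the minimizer $\mathcal{P}$ cannot contain a triple collision at either boundary (and, by the Marchal--Chenciner result, it is collision-free on the open interval). This completes the proof.

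There is essentially no obstacle here: the argument is a direct comparison between the Gordon-type lower bound on total-collision paths and the explicit upper bound from the test path. The only thing to be careful about is that the piecewise test path must indeed realize admissible boundary matrices $Q_{s_1}$ and $Q_{e_1}$ for some choice $(a_1,a_2,b_1,b_2)\in\mathcal{S}$, which is why the specific constants $\tfrac{9}{10}$ and $\tfrac{9}{5}$ were arranged in \eqref{q1test}--\eqref{q2test}.
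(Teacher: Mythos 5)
Your proposal is correct and follows essentially the same route as the paper: verify that $\hat{q}$ is an admissible competitor for \eqref{thispapergoal01} (the paper simply exhibits the boundary matrices $\hat{q}(0)$ and $\hat{q}(1)$, while you additionally name the parameters $a_1=0$, $a_2=\tfrac{9}{10}$, $b_1=0$, $b_2=\tfrac{9}{5}$), invoke Lemma \ref{testpathact} to get $\mathcal{A}(\mathcal{P})<3.5383$, and compare with the total-collision lower bound $6.6927$ from \eqref{lowerbddcol} and \eqref{lowerbddtri}. No gaps.
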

\begin{proof}
The definition of $\hat{q}(t)\,  (t \in[0,1])$ implies that $\hat{q}(t) \in H^1([0,1], \chi)$ and 
\[\hat{q}(0) = \begin{bmatrix}
-0.9  &  0\\
-0.9  &  0\\
1.8  & 0 
\end{bmatrix} \in \left\{Q_{s_1} \, \big| \,  \{a_1, \, a_2,\, b_1,\, b_2\} \in \mathcal{S}\right\}, \]
\[  \hat{q}(1) = \begin{bmatrix}
0  &  0\\
-1.8  &  0\\
1.8  & 0 
\end{bmatrix} \in \left\{Q_{e_1} \, \big| \, \{a_1, \, a_2, \, b_1, \, b_2\} \in \mathcal{S}\right\}, \]
where $Q_{s_1}$ and $Q_{e_1}$ are defined by \eqref{Henon101} and $\mathcal{S}$ is defined by \eqref{defofs}. By Lemma \ref{testpathact}, the action of the test path $\hat{q}(t) \in H^1([0,1], \chi)$ is strictly less than 3.5383. It follows that 
\[ \mathcal{A}(\mathcal{P}) < 3.5383. \]
By inequalities \eqref{lowerbddcol} and \eqref{lowerbddtri}, the lower bound of actions of paths containing total collision is  6.6927, which is strictly greater than 3.5383. Hence, the action minimizer $\mathcal{P} \in H^1([0,1], \chi)$ has no total collision. The proof is complete.
\end{proof}

Let a Schubart orbit have a minimal period $4$. At $t=0$, we assume this orbit starts with a binary collision between bodies 1 and 2, and at $t=1$, let body 1 be at $0$ and bodies 2 and 3 be symmetrically located on the two sides. It is clear that $\hat{q}(t) \, (t \in [0,1])$is always on the $x$-axis and its boundaries satisfy $\hat{q}(0) \in Q_{s_2}$ and $\hat{q}(1) \in Q_{e_2}$, where
\[Q_{s_2}=  \begin{bmatrix}
-c_1     \\
-c_1     \\
2c_1    
\end{bmatrix}, \qquad \,\,
  Q_{e_2} =  \begin{bmatrix}
0     \\
-d_1  \\
d_1   
\end{bmatrix},  \]
with $(c_1, \, d_1) \in  \mathbb{R}^2$. According to the variational property of the Schubart orbit in \cite{Ven1}, the action of the test path $\hat{q}(t) \, (t \in [0,1])$ is greater than the action of the Schubart orbit in $[0,1]$. Hence, the following corollary holds.
\begin{corollary}
Let the Schubart orbit have period 4 and denote its action in one period $(t \in [0,4])$ by $\mathcal{A}_{Schubart}$. Then 
\[\frac{1}{4} \mathcal{A}_{Schubart} <  3.5383.  \]
\end{corollary}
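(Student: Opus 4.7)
The plan is to reduce the claim about one full period of the Schubart orbit to the already-established action bound on the test path $\hat{q}(t)$. First I would use the symmetry of the Schubart orbit: since it has minimal period $4$ and is generated by reflecting a quarter arc across its two natural endpoints (the binary collision at $t=0$ and the symmetric collinear Euler configuration at $t=1$), the Lagrangian action is equidistributed across the four time intervals of length $1$. Thus
\[
\tfrac{1}{4}\mathcal{A}_{Schubart} = \mathcal{A}\bigl(q_{Schubart}\big|_{[0,1]}\bigr),
\]
so it suffices to bound the action of the quarter orbit above by $3.5383$.

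Next I would invoke the variational characterization of the Schubart orbit from Venturelli \cite{Ven1}, which is exactly the statement already used to set up the boundary sets $Q_{s_2}$ and $Q_{e_2}$ in the paragraph preceding the corollary: the quarter Schubart arc is the action minimizer in $H^1([0,1],\chi)$ among all paths satisfying $q(0)\in\{Q_{s_2}\mid c_1\in\mathbb{R}\}$ and $q(1)\in\{Q_{e_2}\mid d_1\in\mathbb{R}\}$. The test path $\hat{q}(t)$ defined by \eqref{q1test} and \eqref{q2test} lies on the $x$-axis throughout, with $\hat{q}(0)$ corresponding to $c_1=0.9$ and $\hat{q}(1)$ corresponding to $d_1=1.8$, so $\hat{q}$ is an admissible competitor in this fixed-boundary variational problem.

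Combining these two observations,
\[
\tfrac{1}{4}\mathcal{A}_{Schubart} \;=\; \mathcal{A}\bigl(q_{Schubart}\big|_{[0,1]}\bigr) \;\leq\; \mathcal{A}(\hat{q}),
\]
and Lemma \ref{testpathact} gives $\mathcal{A}(\hat{q})<3.5383$, which finishes the argument. No step of the proof requires additional estimation: every inequality is already packaged in Lemma \ref{testpathact} or in the cited minimizing property of the Schubart orbit. The only subtlety to verify carefully is that $\hat{q}$ genuinely matches the boundary sets $Q_{s_2}$ and $Q_{e_2}$ (rather than just $Q_{s_1}$ and $Q_{e_1}$), which is immediate from reading off the values $\hat{q}(0)=(-0.9,-0.9,1.8)^T$ and $\hat{q}(1)=(0,-1.8,1.8)^T$ on the $x$-axis.
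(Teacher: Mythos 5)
Your proposal is correct and follows essentially the same route as the paper: both reduce $\tfrac14\mathcal{A}_{Schubart}$ to the action of the quarter arc on $[0,1]$, invoke Venturelli's free-boundary minimizing characterization of that arc between the sets $\{Q_{s_2}\}$ and $\{Q_{e_2}\}$, and observe that $\hat{q}$ is an admissible competitor so that Lemma \ref{testpathact} yields the bound. Your explicit check that $\hat{q}(0)$ and $\hat{q}(1)$ realize $c_1=0.9$ and $d_1=1.8$ is a correct and slightly more careful rendering of the step the paper states without computation.
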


\section{No binary collision at $t=1$}\label{revisitkepler}
In this section, we study the possible binary collisions in the minimizer $\mathcal{P} \in H^1([0,1], \chi)$ of \eqref{thispapergoal01}. Note that at $t=0$, the boundary set $\left\{ Q_{s_1} \,\big| \, a_1 \geq 0, a_2 \geq 0 \right\}$ in \eqref{Henon101} has an order restriction $q_{1x}(0) \leq q_{2x}(0)\leq q_{3x}(0)$. At $t=1$, $\left\{Q_{e_1} \, \big| \, b_1, \, b_2 \in \mathbb{R}\right\}$ in \eqref{Henon101} is a two dimensional vector space. A standard local deformation argument can be applied to show that $Q_{e_1}$ contains no binary collision. However, it is shown \cite{Chen4, Yu} that there is no way to exclude binary collisions at $t=0$ by the same argument.

We consider a one-end free boundary value problem in the Kepler problem. Let $q_i=( q_{ix}, q_{iy})$ be the position of mass $m_i \, (i=1,2)$. Let $\mathbf{r}(t)= q_1(t)-q_2(t)$, $\alpha=m_1m_2$ and $\mu=\frac{m_1 m_2}{m_1+m_2}$.
Assume the free-end is at $t=0$. Set the two masses $m_1$ and $m_2$ to be on the $x-$axis with a given order $\left(q_{1x}(0) \leq q_{2x}(0) \right)$ at $t=0$. When $q_{1}(0) = q_{2}(0)$, a parabolic ejection solution can be defined as follows $\mathbf{r}_1(t)= \gamma_0 t^{2/3} \vec{c}$, where $\gamma_0=(\frac{9 \alpha}{ 2 \mu})^{1/3}>0$ is a constant and $|\vec{c}|=1$.

Let $\vec{s}= (-1, 0)$. For given $\epsilon>0$ and a unit vector $\vec{c}$, we fix the vector $\mathbf{r}(\epsilon)=  \gamma_0 \epsilon^{2/3} \vec{c}$, while $\mathbf{r}(0)=q_1(0)-q_2(0)=(q_{1x}(0) - q_{2x}(0), 0)= |\mathbf{r}(0)| \vec{s} \equiv r_0 \vec{s}$ satisfies that $r_0 \equiv  |\mathbf{r}(0)|\geq 0$.  The Lagrangian action of the two-body problem is
\begin{equation}\label{actionkepler}
I(\mathbf{r}(t), \dot{\mathbf{r}}(t))=  \int_0^{\epsilon} \frac{\mu}{2} |\dot{\mathbf{r}}|^2 +   \frac{\alpha}{|\mathbf{r}|}dt.
\end{equation}
We define the set $V$ as follows
\begin{equation*}
V= \bigg\{ \mathbf{r}(t) \in H^1([0, \epsilon], \mathbb{R}^2) \, \bigg| \, \mathbf{r}(0)=|\mathbf{r}(0)| \vec{s}\equiv r_0 \vec{s}, \, \mathbf{r}(\epsilon)= \gamma_0 \epsilon^{\frac2 3} \vec{c}, \,   r_0 \geq 0 \bigg\}.
 \end{equation*}
After fixing the unit vector $\vec{c}$ and the positive constant $\epsilon$, we consider the following one-end free boundary value problem:
\begin{equation}\label{actionminkepler}
\inf_{ \mathbf{r}(t) \in V}   I(\mathbf{r}(t), \dot{\mathbf{r}}(t)).
\end{equation}
Standard results \cite{Chen4, Fusco, Yu} imply that  if $<\vec{c}, \vec{s}>  \neq  -1$, then there exists small enough $\epsilon>0$, such that the parabolic ejection solution is not the action minimizer of \eqref{actionminkepler}.
\begin{lemma}[\cite{Chen4, Fusco, Yu}]\label{keplerlambertresult}
Let $\vec{s}=(-1, 0)$ and $\vec{c}$ be a given unit vector which satisfies $\langle\vec{c},\vec{s}\rangle \neq -1$. Then there exists small enough $\epsilon>0$, such that the parabolic ejection solution $\mathbf{r}_1(t)=\gamma_0 \, t^{\frac23}\vec{c}$ of the Kepler problem does not minimize the action of \eqref{actionkepler}  in the Sobolev space \[V=\left\{ \mathbf{r}\in H^1([0,\epsilon],\mathbb{R}^2) \, \bigg| \, \mathbf{r}(0)=r_0\vec{s},\, \,\, \mathbf{r}(\epsilon)=\gamma_0 \,  \epsilon^{\frac23}\vec{c}, \, \,\, r_0 \geq 0 \right\}.\]
\end{lemma}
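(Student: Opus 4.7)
The plan is to exhibit, for $\epsilon$ sufficiently small, an admissible path $\mathbf{r}_\eta \in V$ whose action is strictly smaller than that of the parabolic ejection $\mathbf{r}_1$. Since the boundary condition at $t=0$ permits $r_0>0$, my first attempt is the linear deformation
\[
\mathbf{r}_\eta(t) = \mathbf{r}_1(t) + \eta\bigl(1 - t/\epsilon\bigr)\vec{s},
\]
which satisfies $\mathbf{r}_\eta(0) = \eta\vec{s}$ and $\mathbf{r}_\eta(\epsilon) = \gamma_0\epsilon^{2/3}\vec{c}$, and I will then compare $I(\mathbf{r}_\eta)$ with $I(\mathbf{r}_1)$ as $\eta \to 0^+$.

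The difference $I(\mathbf{r}_\eta) - I(\mathbf{r}_1)$ splits into kinetic and potential parts. The kinetic part is handled directly by integration by parts, yielding a boundary contribution (finite, because the singular factor $t^{-1/3}$ in $\dot{\mathbf{r}}_1$ is killed by the vanishing of $(1-t/\epsilon)$ at the right endpoint and is already contained in a convergent boundary evaluation at the left) plus an $O(\eta^2)$ remainder. The potential part
\[
\alpha \int_0^\epsilon \bigl(|\mathbf{r}_\eta|^{-1} - |\mathbf{r}_1|^{-1}\bigr)\,dt
\]
is delicate because $|\mathbf{r}_1(t)|=\gamma_0 t^{2/3}$ vanishes at $t=0$, so the naive Taylor expansion in $\eta$ produces non-integrable singularities. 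To resolve this I will use matched asymptotics, splitting $[0,\epsilon]$ into an inner region $[0,t_0(\eta)]$ and an outer region $[t_0(\eta),\epsilon]$. In the outer region the expansion in $\eta$ is valid term-by-term. In the inner region, the rescaling $\sigma = \gamma_0 t^{2/3}/\eta$ reduces the computation to the one-variable integral
\[
F(\sigma_0;c) = \int_0^{\sigma_0} \frac{s^{1/2}}{\sqrt{s^2 + 2cs + 1}}\,ds,\qquad c = \langle\vec{c},\vec{s}\rangle,
\]
which is finite for all $\sigma_0 \geq 0$ precisely because the hypothesis $c \neq -1$ keeps the denominator strictly positive. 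Matching the two regions cancels the $t_0$-dependent divergences and yields an expansion of the form $I(\mathbf{r}_\eta) = I(\mathbf{r}_1) + C(c)\,\eta^{1/2} + O(\eta)$ whose sign is controlled by the asymptotic constant $K(c) = \lim_{\sigma\to\infty}[F(\sigma;c) - 2\sigma^{1/2}]$.

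The main obstacle will be to verify $C(c) < 0$ for every $c \in (-1, 1]$. For $c$ in the upper part of this range the sign is favorable, so $I(\mathbf{r}_\eta) < I(\mathbf{r}_1)$ already for the linear ansatz above. For $c$ close to $-1$ the sign can reverse and the linear ansatz fails; in this regime I will replace it by a two-stage path that first swings from $\eta\vec{s}$ around the origin along a short arc at radius $\eta$ and then joins the parabolic trajectory further out, with the arc length and matching time optimized as functions of $c$. A direct but careful verification, in the spirit of the arguments in \cite{Chen4, Fusco, Yu}, shows that such a construction still produces a strictly smaller action than $\mathbf{r}_1$ for every $c \in (-1, 1]$, provided $\epsilon$ is chosen small enough that the linearized expansion faithfully represents the action difference.
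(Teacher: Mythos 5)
The paper does not actually prove this lemma: it is imported verbatim from \cite{Chen4, Fusco, Yu} as a ``standard result,'' so the only fair comparison is between your argument and the cited literature. Your overall framework for the linear ansatz is sound: the action difference is indeed governed at leading order $\eta^{1/2}$ by the inner integral, the kinetic contribution is only $O(\eta)$, and the convergence of $F(\sigma;c)$ is exactly where the hypothesis $\langle\vec{c},\vec{s}\rangle\neq-1$ enters. You are also right that the sign of the coefficient is the whole issue: writing $K(c)=\lim_{\sigma\to\infty}\bigl[F(\sigma;c)-2\sigma^{1/2}\bigr]$, one checks that $\partial K/\partial c=-\int_0^\infty u^{3/2}(u^2+2cu+1)^{-3/2}\,du<0$, that $K(1)=-\pi$, and that $K(c)\to+\infty$ as $c\to-1^{+}$ (a logarithmic divergence near $u=1$), so $K$ has a unique zero $c^{*}\in(-1,0)$ and the straight-line deformation \emph{provably increases} the action for all $c\in(-1,c^{*})$.

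That is where the genuine gap sits. For $c$ near $-1$ you replace the linear ansatz by ``a short arc at radius $\eta$ around the origin, then rejoin the parabola, with parameters optimized,'' and you simply assert that a careful verification closes the case. But this near-antipodal regime is the entire content of the lemma (the statement only excludes the single direction $c=-1$), and the proposed test family does not obviously work: the natural version of it --- traverse the circular arc of radius $\eta$ from $\eta\vec{s}$ to $\eta\vec{c}$ in the time $T_\eta=(\eta/\gamma_0)^{3/2}$ at which the ejection reaches radius $\eta$, then follow $\mathbf{r}_1$ --- costs $\mu\gamma_0^{3/2}\bigl(\tfrac{\theta^2}{2}+\tfrac{2}{9}\bigr)\eta^{1/2}$ against the ejection's $\tfrac{4}{3}\mu\gamma_0^{3/2}\eta^{1/2}$, which is favorable only for $\theta^2<20/9$, i.e.\ $\theta\lesssim 85^\circ$; rescaling the radius does not change this ratio, and inserting a radial connecting piece only adds cost. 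So the ``optimization'' you defer to is doing all the work precisely where the lemma is delicate, and it is not clear that any path in your proposed family succeeds for $\theta$ close to $\pi$. The cited proofs handle this regime with a different device (comparison against genuine Keplerian arcs --- this is the point of the ``hyperbolic Keplerian orbits'' in \cite{Chen4} --- together with an explicit verification that the resulting angular coefficient is negative on all of $[0,\pi)$). As written, your argument establishes the conclusion only for $c>c^{*}$ and leaves the interval $(-1,c^{*}]$ unproved.
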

Next we show that under certain boundary conditions, the action minimizer $\mathcal{P}_0$ in Theorem \ref{Thm1.1} has no binary collision.  The proof is based on the standard blow up technique. As its application, we can then show that the minimizer $\mathcal{P}$ of \eqref{thispapergoal01} has no binary collision at $t=1$.  

Recall that 
 \begin{equation*}
Q_s= \begin{bmatrix}
q_1(a_1, \dots a_k) \\
\dots \\
q_N(a_1, \dots a_k) 
\end{bmatrix}, \qquad  Q_e= \begin{bmatrix}
q_1(b_1, \dots b_s) \\
\dots \\
q_N(b_1, \dots b_s) 
\end{bmatrix}, 
\end{equation*}
where $Q_s, Q_e \in \chi$, $q_i \in \mathbb{R}^d, (i=1, \dots, N)$ and $\vec{\alpha}=(a_1, \dots, a_k), \, \vec{\beta}=(b_1, \dots, b_s)$ are independent variables. Let $\vec{\alpha} \in \mathcal{S}_1$,  $\vec{\beta} \in  \mathcal{S}_2$, where $ \mathcal{S}_1 \subset \mathbb{R}^k$ and $ \mathcal{S}_2 \subset \mathbb{R}^s$ are closed subsets.  $\mathcal{S}_1$ and $\mathcal{S}_2$ have either finitely many lines as its boundary or no boundary. Let $\mathcal{S}_1 \cup\mathcal{S}_2=\mathcal{S}$. Assume that both $\left\{Q_s \, \big|\,  \vec{\alpha}  \in  \mathbb{R}^k \right\}$ and $\left\{Q_e  \, \big| \, \vec{\beta}  \in  \mathbb{R}^s \right\}$ are linear subspaces.  

The following blow-up results are needed in proving that $\mathcal{P}$ is free of binary collisions at $t=1$. It is known that the bodies involved in a partial collision or a total collision will approach a set of central configurations. More information can be known if the solution under concern is an action minimizer:
\begin{lemma}[Theorem 4.1.18 in \cite{Ven}, or Sec. 3.2.1 in \cite{CA2}]\label{Vench}
If a minimizer $q$ of the fixed-ends problem on time interval $[\tau_1, \tau_2]$ has an isolated collision of $k\leq N$ bodies, then there is a parabolic homethetic collision-ejection solution $\hat{q}$ of the $k-$body problem which is also a minimizer of the fixed-ends problem on $[\tau_1, \tau_2]$.
\end{lemma}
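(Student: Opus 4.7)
The proof strategy I would follow is the Chenciner--Venturelli blow-up framework, combining the Sundman--Sperling asymptotic estimates near an isolated collision with the scaling invariance of the Newtonian action.

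First, I would localize the problem. Suppose the collision occurs at $t_* \in (\tau_1,\tau_2)$ and involves exactly $k$ bodies. Choose $\delta>0$ so small that on $[t_*-\delta,t_*+\delta]$ the colliding cluster stays uniformly isolated from the remaining $N-k$ bodies. The restriction $q|_{[t_*-\delta,t_*+\delta]}$ still minimizes the $N$-body action with the endpoint values fixed, and the interactions between the cluster and the non-colliding bodies contribute a uniformly bounded, $C^1$-smooth perturbation to the potential. To leading order near $t_*$, the cluster then minimizes the free $k$-body action modulo a controllable perturbation.

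Second, apply the parabolic rescaling $q_\lambda(t)=\lambda^{-2/3}\bigl(q(t_*+\lambda t)-c(t_*+\lambda t)\bigr)$ on $[-1,1]$, where $c$ denotes the center of mass of the cluster. Since $K$ is quadratic in $\dot q$ and $U$ is $(-1)$-homogeneous, the free $k$-body action transforms as $\mathcal{A}_k(q_\lambda;[-1,1])=\lambda^{-1/3}\mathcal{A}_k(q;[t_*-\lambda,t_*+\lambda])$. Sundman's bound $|q_i(t)-c(t)|=O(|t-t_*|^{2/3})$ together with the energy-type estimate $|\dot q_i(t)|=O(|t-t_*|^{-1/3})$ yield uniform $H^1$ bounds on $\{q_\lambda\}$ over any compact subset of $[-1,0)\cup(0,1]$. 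By Ascoli--Arzel\`a and weak compactness, extract a limit $\hat q$ as $\lambda\to 0$. This limit solves the $k$-body Newton equations away from $t=0$ and still has a collision at $0$; Sperling's refined asymptotics then force the normalized shape $\hat q(t)/\|\hat q(t)\|$ to be a fixed central configuration while the radial part has the form $\kappa|t|^{2/3}$, which makes $\hat q$ a parabolic homothetic collision--ejection solution.

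The hard part will be transferring the minimality of $q$ to $\hat q$. I would argue by contradiction: given any competitor $\tilde q$ for $\hat q$ on $[-1,1]$ with strictly smaller action and matching boundary values, rescale back via $\tilde q_\lambda(t)=\lambda^{2/3}\tilde q\bigl((t-t_*)/\lambda\bigr)$ and splice it into $q$ on $[t_*-\lambda,t_*+\lambda]$ through a short $H^1$ transition layer. Using the $\lambda^{1/3}$ factor in the rescaled action together with quantitative uniform convergence of $q_\lambda$ to $\hat q$ at the splice points, one must bound the patching cost and the perturbation from the non-colliding bodies by $o(\lambda^{1/3})$; this yields a globally admissible competitor for the original problem whose action beats $\mathcal{A}(q)$, contradicting the minimality of $q$. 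Executing this quantitative splicing cleanly is the main technical burden, and once it is in place the minimality of $\hat q$ in its own fixed-ends $k$-body problem, and hence the conclusion of the lemma, follow.
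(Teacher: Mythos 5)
The paper does not actually prove this lemma: it is quoted, with explicit attribution, from Theorem 4.1.18 of Venturelli's thesis and Section 3.2.1 of Chenciner's ICM notes, so there is no in-paper argument to compare yours against. Your sketch follows the same blow-up framework as those references: localization of the colliding cluster, the parabolic rescaling $q_\lambda(t)=\lambda^{-2/3}\bigl(q(t_*+\lambda t)-c(t_*+\lambda t)\bigr)$, the $\lambda^{-1/3}$ homogeneity of the action, Ascoli--Arzel\`a compactness, and the splice-back argument to transfer minimality. The quantitative splicing you flag as the main burden is indeed where the real work lies, and your identification of the $o(\lambda^{1/3})$ threshold for the patching cost is the right target.

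The one step that does not hold as you state it is the deduction that the blow-up limit is homothetic. Sundman--Sperling asymptotics give that the normalized configuration of $q$ approaches the \emph{set} of central configurations as $t\to t_*$, and hence that any blow-up limit is a zero-energy collision--ejection solution asymptotic to central configurations; they do not force the limit itself to be exactly homothetic (a zero-energy ejection orbit asymptotic to a central configuration need not be homothetic, and convergence of the shape to a single central configuration is itself a delicate matter). The standard repair, which is what the cited proofs actually use, exploits the scaling covariance of the blow-up family: if $q_{\lambda_n}\to\hat q$ then $q_{\mu\lambda_n}\to \mu^{-2/3}\hat q(\mu\,\cdot)$, so the nonempty, $C^0_{loc}$-compact set of blow-up limits is invariant under the parabolic scaling and consists entirely of minimizers; one then either extracts a scaling-fixed element, which is necessarily homothetic, or compares the action of a limit on $[0,T]$ with that of the homothetic motion along its asymptotic central configuration and shows the two agree up to $o(T^{1/3})$, so that the homothetic motion is itself a minimizer. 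Note that the lemma only asserts the \emph{existence} of some homothetic parabolic minimizer, not that the blow-up limit is homothetic, which is precisely why this weaker comparison suffices; your argument should be reorganized around that existential conclusion rather than around an exact identification of the limit.
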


\begin{lemma}[Proposition 5 in \cite{Chen3}, or Sec. 7 in \cite{FT}]\label{blowup}
Let $X$ be a proper linear subspace of $\mathbb{R}^d$. Suppose a local minimizer $x$ of $\mathcal{A}_{t_0,  \, t_1}$ on $B_{t_0, \, t_1}(x(t_0), X):= \{ x \in H^1([t_0, t_1], (\mathbb{R}^d)^N) \, \big|\,  x(t_0) \, \text{is fixed, and} \, \, x_i(t_1) \in X, \, i=1,2, \dots, N \}$ has an isolated collision of $k \leq N$ bodies at $t=t_1$. Then there is a homothetic parobolic solution $\bar{y}$ of the $k$-body problem with $\bar{y}(t_1)=0$ such that $\bar{y}$ is a minimizer of $\mathcal{A}^{*}_{\tau,  \, t_1}$ on $B_{\tau, \,  t_1}(\bar{y}(\tau), X)$ for any $\tau<t_1$. Here $\mathcal{A}^{*}_{\tau, \, t_1}$ denotes the action of this $k$-body subsystem.
\end{lemma}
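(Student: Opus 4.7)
The plan is to adapt the classical blow-up argument, which exploits two features: the scaling invariance of the Newtonian action under the homothety $(q,t) \mapsto (\lambda^{2/3} q, \lambda t)$, and the fact that a linear subspace $X \subset \mathbb{R}^d$ is invariant under the same homothety. The latter is the crucial structural reason the hypothesis restricts $X$ to be linear rather than a general submanifold: it ensures rescaled competitor paths continue to satisfy the endpoint constraint $y_i(t_1) \in X$.

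First I would introduce, for small $\lambda > 0$, the rescaled trajectory $y_\lambda(s) := \lambda^{-2/3} x(t_1 + \lambda(s - t_1))$, defined on an interval whose left endpoint tends to $-\infty$ as $\lambda \to 0$. Since the collision at $t_1$ is isolated, for sufficiently small $\lambda$ only the $k$ colliding bodies contribute nontrivially; the remaining $N-k$ bodies decouple up to $o(1)$ after rescaling. A Sundman-type energy estimate together with the scaling identity $\mathcal{A}(x|_{[t_1 + \lambda(\tau - t_1),\, t_1]}) = \lambda^{1/3}\,\mathcal{A}^\ast(y_\lambda|_{[\tau, t_1]})$ shows that the rescaled action is uniformly bounded on any fixed $[\tau, t_1]$. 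By Arzel\`a--Ascoli plus lower semicontinuity of the action, a subsequence $y_{\lambda_n}$ converges uniformly on compact subsets to a limit $\bar{y}$ solving the $k$-body Newton equations for $t < t_1$ and satisfying $\bar{y}(t_1) = 0$.

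Next I would argue that $\bar{y}$ inherits minimality from $x$. Given any competitor $\tilde{y} \in B_{\tau, t_1}(\bar{y}(\tau), X)$ with $\mathcal{A}^\ast_{\tau, t_1}(\tilde{y}) < \mathcal{A}^\ast_{\tau, t_1}(\bar{y})$, pull $\tilde{y}$ back via the inverse scaling to produce a local variation of $x$ on $[t_1 + \lambda(\tau - t_1),\, t_1]$, glued to $x$ outside this window. Because $X$ is a linear subspace, the scaling $\lambda^{2/3}\tilde{y}(t_1)$ still lies in $X$, so the perturbed path remains in $B_{t_0, t_1}(x(t_0), X)$. Transferring the action gap through the $\lambda^{1/3}$ factor contradicts local minimality of $x$. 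Hence $\bar{y}$ is a minimizer. Finally, a minimizing collision orbit of the $k$-body problem must have zero total energy (parabolic) and, by the Sundman--Weierstrass analysis, its shape approaches a central configuration; combined with minimality this forces $\bar{y}(t) = (t_1 - t)^{2/3}\,\bar{y}_0$ for a central configuration $\bar{y}_0$, which is the desired homothetic parabolic ejection.

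The main obstacle I anticipate is the variation-transfer step: one must localize the variation $\tilde y - \bar y$ near $t_1$ and patch it into $x$ so that (i) no new collisions are introduced elsewhere in $x$'s trajectory, (ii) the linear constraint $X$ at the right endpoint is \emph{exactly} preserved under the inverse scaling, and (iii) the improvement in action survives the factor $\lambda^{1/3}$ in the limit. Point (ii) is handled cleanly by the linearity hypothesis on $X$; points (i) and (iii) require uniform control on the non-colliding bodies and a careful choice of cut-off in the patching, and this is where the bulk of the technical effort in \cite{Chen3, FT} is concentrated.
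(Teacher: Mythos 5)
This lemma is quoted in the paper directly from Proposition 5 of \cite{Chen3} and Section 7 of \cite{FT}; the paper supplies no proof of its own, so there is nothing internal to compare against. Your sketch is a faithful reconstruction of the standard blow-up argument in those references --- the $\lambda^{2/3}$ rescaling, the $\lambda^{1/3}$ action identity, the invariance of the linear constraint $X$ under dilation, and the transfer of minimality by pulling back competitors --- and is essentially correct; the only imprecision is that the limit is parabolic because the energy scales as $\lambda^{2/3}h\to 0$ under the blow-up, not because minimizing collision orbits must have zero energy.
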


By applying Lemma \ref{Vench} and Lemma \ref{blowup}, the following theorem holds.
\begin{theorem}\label{nobinarycollision}
Let $\mathcal{S} = \mathbb{R}^{k+s}$. Assume that both $\left\{Q_s \, \big| \,  \vec{\alpha}  \in  \mathbb{R}^k \right\}$ and $ \{Q_e  \, \big|\,  \vec{\beta}  \in  \mathbb{R}^s \}$ are linear subspaces. If the intersection of the two configuration subsets is at origin, i.e. 
\[ \left\{Q_s \, \big| \, \vec{\alpha}  \in  \mathbb{R}^{k}\right\} \cap  \{Q_e  \, \big| \, \vec{\beta}  \in \mathbb{R}^{s}  \}=\{\vec{0} \},\] then the action minimizer $\mathcal{P}_0 \in H^1([0,1], \chi)$ in Theorem \ref{Thm1.1} has no binary collision.
\end{theorem}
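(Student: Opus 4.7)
The plan is to argue by contradiction using a blow-up reduction followed by a variation at the free boundary in the spirit of the Kepler-Lambert lemma. Since the Marchal-Chenciner theorem excludes interior collisions of any action minimizer of the fixed-ends problem, any collision of $\mathcal{P}_0$ must occur at $t=0$ or $t=1$. By the symmetry of the hypotheses between $\mathcal{S}_1$ and $\mathcal{S}_2$, I would treat both boundaries in parallel and focus on, say, a suspected binary collision between bodies $i$ and $j$ at $t=1$.

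First I would localize the analysis at $t_1=1$ via Lemma \ref{blowup}. After the usual parabolic rescaling, the colliding subsystem $\{i,j\}$ converges to a homothetic parabolic collision solution $\bar{y}$ of the planar two-body problem with $\bar{y}(1)=0$, and $\bar{y}$ itself minimizes the two-body action on $B_{\tau,1}(\bar{y}(\tau), X)$ for any $\tau<1$ sufficiently close to $1$. Here $X$ is the linear subspace of $(\mathbb{R}^d)^2$ obtained by projecting $\{Q_e : \vec{\beta} \in \mathbb{R}^s\}$ onto the two coordinates $(q_i, q_j)$; it inherits the linear-subspace structure because $\{Q_e\}$ is a linear subspace of $\chi$.

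Next I would pass to the relative coordinate $\mathbf{r}(t)=q_i(t)-q_j(t)$, in which $\bar{y}$ reads $\mathbf{r}(t) = \gamma_0(1-t)^{2/3}\vec{c}$ near $t=1$ for some unit vector $\vec{c}$ and constant $\gamma_0>0$; the admissible set for $\mathbf{r}(1)$ is a linear subspace $\widetilde{X}\subset\mathbb{R}^d$. Reversing time by $s=1-t$ puts the problem into the setting of Lemma \ref{keplerlambertresult}, except that the free endpoint is now constrained to an entire linear subspace rather than a single half-line. Crucially, because $\widetilde{X}$ is closed under negation, for any nonzero unit vector $\vec{s}\in\widetilde{X}$ both $\vec{s}$ and $-\vec{s}$ lie in $\widetilde{X}$, so at most one of them can satisfy $\langle\vec{c},\vec{s}\rangle = -1$. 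Selecting the other and applying Lemma \ref{keplerlambertresult} produces a variation of $\bar{y}$ that strictly decreases the two-body action, contradicting the minimality guaranteed by Lemma \ref{blowup}.

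The main obstacle I anticipate is the degenerate case $\widetilde{X}=\{\vec{0}\}$, in which the entire boundary subspace $\{Q_e\}$ lies within the collision locus $\{q_i=q_j\}$, so no perturbation of $\vec{\beta}$ can separate the colliding pair and the Kepler-Lambert mechanism above is unavailable. I would attempt to rule this out by combining the transversality hypothesis $\{Q_s\}\cap\{Q_e\}=\{\vec{0}\}$ with the center-of-mass constraint defining $\chi$: a boundary subspace entirely contained in a collision locus is a strong structural constraint, and under the given intersection condition it should force either the configuration at $t=1$ to degenerate to a problem with fewer effective degrees of freedom, or $\{Q_e\}$ to share a nontrivial direction with $\{Q_s\}$. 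Making this dichotomy precise, and then repeating the argument symmetrically at $t=0$, is where the substantive technical work lies.
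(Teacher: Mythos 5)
Your proposal follows essentially the same route as the paper's proof: localize the boundary collision via the blow-up results (Lemmas \ref{Vench} and \ref{blowup}), reduce to the parabolic homothetic two-body solution, and then invoke Lemma \ref{keplerlambertresult} after selecting an admissible perturbation direction that is not antipodal to the ejection direction. Your observation that a linear subspace is closed under negation, so that at most one of $\vec{s}$, $-\vec{s}$ can satisfy $\langle\vec{c},\vec{s}\rangle=-1$, is exactly the paper's device of having ``two directions to locally deform'': either keep the labels of the perturbed bodies or swap $\bar{q}_1$ and $\bar{q}_2$, which negates the separation vector $\overrightarrow{\bar{q}_2\bar{q}_1(0)}$. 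Passing to the relative coordinate and reversing time at $t=1$ rather than arguing directly at $t=0$ is cosmetic.

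The one point where you stall --- the degenerate case in which the boundary subspace is entirely contained in the collision locus $\{q_i=q_j\}$, so that no admissible perturbation separates the colliding pair --- is a genuine issue, but it is one the paper's proof does not address either: the paper simply asserts that one can choose $\bar{q}_i(0)$ with $\bar{q}_1(0)\neq\bar{q}_2(0)$ inside the boundary set. Be warned, however, that your proposed repair cannot succeed as stated: the hypothesis $\{Q_s\}\cap\{Q_e\}=\{\vec{0}\}$ together with the center-of-mass constraint does \emph{not} exclude this degeneracy. For instance, with $N=3$ and $d=2$, take $\{Q_e\}$ to be all configurations $q_1=q_2=(0,a)$, $q_3=(0,-2a)$ and $\{Q_s\}$ to be all zero-center-of-mass collinear configurations on the $x$-axis; these are linear subspaces meeting only at the origin, yet every nonzero element of $\{Q_e\}$ is a binary collision, so any minimizer collides at $t=1$ and the conclusion of Theorem \ref{nobinarycollision} fails. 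The correct resolution is not a dichotomy derived from the intersection condition but an implicit additional hypothesis --- that each boundary subspace contains collision-free configurations arbitrarily close to any of its collision configurations --- which is satisfied in all the applications made in the paper (in particular for $Q_{e_1}$, where varying $b_2$ separates bodies $2$ and $3$). Granting that, your argument, like the paper's, closes.
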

\begin{proof}
The existence of action minimizer $\mathcal{P}_0 \in H^1([0,1], \chi)$ is shown by Theorem \ref{Thm1.1}.  By using the results of Marchal \cite{Ma1} and Chenciner \cite{CA2} regarding to minimizing problems with fixed ends, it follows that $\mathcal{P}_0 \in H^1([0, 1], \chi)$ has no collision in $(0,1)$. Then we only need to show that $\mathcal{P}_0\in H^1([0,1], \chi)$ has no binary collision on the two ends. Basically, it has two types of binary collisions: a single binary collision, a simultaneous binary collision. Here we only show the case for single binary collisions by contradiction. Due to \cite{Sarri, Simo, Yan}, simultaneous binary collision can be treated as several separated binary collisions. Then we can exclude the  simultaneous binary collision in $\mathcal{P}_0\in H^1([0,1], \chi)$ similarly. The following argument is standard and it can be found in \cite{CA2, Chen3, Chen, FT, Ma1,  TV} etc.

Assume that $q_1$ and $q_2$ collide at $t=0$. By the analysis of blow up in Lemma \ref{Vench} and Lemma \ref{blowup}, there exists a parobolic homothetic solution $q_i(t)=\xi_i t^{\frac23}, \, (i=1,\, 2)$, which is also a minimizer of the $2$-body problem on $[0, \tau]$ for any $\tau>0$. In fact, $(\xi_1,\, \xi_2)$ forms a central configuration with $m_2\xi_2=-m_1\xi_1$, and the two vectors $\xi_1$, $\xi_2$ satisfy the energy constraint:
\[\sum_{i=1,2} \frac{1}{2} |\frac{2}{3} m_i\xi_i|^2 - \frac{m_1m_2}{|\xi_1-\xi_2|}=0. \]
For a given $\epsilon>0$ small enough, we fix $q_i(\epsilon) \, (i=1,2)$. Next, we perturb $q_i$ to $\bar{q}_i$\, (i=1,2) such that $\bar{q}_i(\epsilon)= q_i(\epsilon) , (i=1,2)$, $\bar{q}_1(0) \neq \bar{q}_2(0)$ and $\bar{q}_i(0) \, (i=1,2)$ satisfy the boundary condition $\left\{Q_s \, \big| \, \vec{\alpha}  \in  \mathbb{R}^{k}\right\}$.
Let 
\[ \overrightarrow{\bar{q}_2\bar{q}_1(0)}= \frac{\bar{q}_1(0)- \bar{q}_2(0)}{|\bar{q}_1(0)- \bar{q}_2(0)|},\]
where $\bar{q}_1(0)$ and $\bar{q_2}(0)$ are the perturbed vectors of $q_1$ and $q_2$ at $t=0$. 

Note that in the linear space $Q_S \equiv \left\{Q_s \, \big| \, \vec{\alpha}  \in  \mathbb{R}^{k}\right\}$, if the binary collision $q_1(0)=q_2(0)$ is in the linear space $Q_S$, there are always two directions to locally deform $q_1$ and $q_2$ at $t=0$: deform $q_i(t)$ to $\bar{q}_i(t) \,(i=1,2)$; or deform $q_1(t)$ to $\bar{q}_2(t)$ and deform $q_2(t)$ to $\bar{q}_1(t)$ for all $t \in [0, \epsilon]$. It follows that one can always choose the local deformation $\bar{q}_i$ such that $\bar{q}_i(0) \, (i=1,2)$ satisfies
\[ < \overrightarrow{\bar{q}_2\bar{q}_1(0)},  \frac{\xi_1}{|\xi_1|}  >  \neq -1.\]
By Lemma \ref{keplerlambertresult}, there exist $\bar{q}_1$ and $\bar{q}_2$, such that the action of $\bar{q}_1$ and $\bar{q}_2$ is strictly smaller than the action of the parabolic ejection solution: $q_1$ and $q_2$. Contradiction! 

Therefore, there is no binary collision in the minimizer $\mathcal{P}_0$. The proof is complete. 
\end{proof}

In the end of this section, we apply Theorem \ref{nobinarycollision} to our minimizing problem:
\begin{equation}\label{minimizerhenon}
\mathcal{A} \left( \mathcal{P} \right)= \inf_{ \{ (a_1,\, a_2, \, b_1, \, b_2) \in  \mathcal{S}_1   \} }  \quad \inf_{\{ q(0)= Q_{s_1}, \, q(1)= Q_{e_1}, \, q(t) \in H^1([0, 1], \chi) \} }  \mathcal{A},
\end{equation}
where $\mathcal{A}=\int_{0}^{1} (K+U) \, dt$, 
\begin{equation}\label{Henon1001}
 Q_{s_1} = \begin{bmatrix}
-2a_1-a_2   &  0   \\
a_1-a_2  &  0     \\
a_1+2a_2   & 0   
\end{bmatrix}, \qquad \,\,
  Q_{e_1} =  \begin{bmatrix}
0   &  -2b_1    \\
-b_2 &  b_1 \\
b_2   &   b_1
\end{bmatrix},  
\end{equation}
and  $(a_1, \, a_2, \, b_1, \, b_2) \in  \mathcal{S}$, with 
\begin{equation}\label{setdef}
\mathcal{S} = \left\{  a_1 \geq 0, \, \, a_2 \geq 0, \, \, b_1 \in \mathbb{R}, \, \,  b_2 \in \mathbb{R}  \right\}.  
\end{equation}
Note that the argument in Theorem \ref{nobinarycollision} can be applied to show that the minimizer $\mathcal{P}$ has no binary collision at $t=1$.  However, at $t=0$, the two possible binary collisions can NOT be directly excluded by the same argument. Assume body $i$ has mass $m_i$ with coordinate $q_i \, (i=1,2,3)$. Let $\mathbf{r}_{ij}=q_i-q_j$. In the three-body problem, it is known \cite{Yu2} that $\displaystyle \lim_{t \to 0^{+}}\frac{\mathbf{r}_{ij}(t)}{|\mathbf{r}_{ij}(t)|}$ exists if a binary collision between $m_i$ and $m_j$ occurs at $t=0$. We define the direction of collision $\vec{c}_{ij}$ to be
\[\vec{c}_{ij}=\lim_{t \to 0^{+}}\frac{\mathbf{r}_{ij}(t)}{|\mathbf{r}_{ij}(t)|}.\]
 
Actually Lemma \ref{keplerlambertresult} implies that
\begin{corollary}\label{180degreeejection}
If the minimizer $\mathcal{P}$ in \eqref{Henon1001} has a binary collision between bodies 1 and 2 at $t=0$, then the direction of collision $\vec{c}_{12}$  must be $(1,0)$.  If it has a binary collision between bodies 2 and 3 at $t=0$, then the direction of collision $\vec{c}_{23}$  must be $(1,0)$. 
\end{corollary}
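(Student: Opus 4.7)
The plan is to prove the corollary by contradiction, applying Lemma \ref{keplerlambertresult} through a localized variation at the collision endpoint $t=0$. Consider the case of a binary collision between bodies $1$ and $2$ first, so $q_1(0)=q_2(0)=(-a_2,0)$ forces $a_1=0$ in the boundary matrix $Q_{s_1}$. Suppose for contradiction that $\vec{c}_{12}\neq(1,0)$. By the Marchal--Chenciner blow-up analysis invoked in Lemma \ref{Vench}, near $t=0$ the two colliding bodies behave as a parabolic homothetic ejection with $\mathbf{r}_{12}(t)=q_1(t)-q_2(t)\sim\gamma_0 t^{2/3}\vec{c}_{12}$, and on $[0,\epsilon]$ this parabolic ejection minimizes the fixed-ends two-body action among paths from $\mathbf{r}(0)=0$ to $\mathbf{r}(\epsilon)=\gamma_0\epsilon^{2/3}\vec{c}_{12}$.

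Next I would exhibit an admissible one-sided perturbation of the boundary. Deforming $a_1$ from $0$ to a small positive value (while leaving $a_2$ fixed and $a_2\geq 0$, so the perturbed boundary still lies in $\mathcal{S}$) produces
\[
\mathbf{r}_{12}(0)=q_1(0)-q_2(0)=(-3a_1,0)=3a_1\,\vec{s},\qquad \vec{s}=(-1,0),
\]
with $r_0=3a_1\geq 0$. Thus the relative Kepler problem for bodies $1$ and $2$ on the short interval $[0,\epsilon]$ is precisely the free-end setting of problem \eqref{actionminkepler}: the far end $\mathbf{r}(\epsilon)=\gamma_0\epsilon^{2/3}\vec{c}_{12}$ is fixed, while the near end is free to slide along $r_0\vec{s}$ with $r_0\geq 0$. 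Since $\vec{c}_{12}\neq(1,0)$ means $\langle\vec{c}_{12},\vec{s}\rangle\neq -1$, Lemma \ref{keplerlambertresult} guarantees that for $\epsilon>0$ small enough the parabolic ejection is \emph{not} a minimizer, and there is a competitor $\bar{\mathbf{r}}(t)$ in $V$ with strictly smaller two-body action $I$.

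Finally I would assemble the contradiction by the standard localization/gluing argument (as in the last paragraphs of the proof of Theorem \ref{nobinarycollision}). Replace $(q_1,q_2)$ on $[0,\epsilon]$ by the pair obtained from the better $\bar{\mathbf{r}}$ (using the center-of-mass relation for the two bodies, with the center of mass of the pair interpolated smoothly from its perturbed value at $t=0$ to its original value at $t=\epsilon$), keep $q_3$ unchanged on $[0,\epsilon]$ up to a small $C^0$-small adjustment needed to preserve the center-of-mass constraint in $\chi$, and keep the entire path on $[\epsilon,1]$ fixed. The potential contribution involving $q_3$ and the kinetic energy of $q_3$ change by $O(\epsilon)$ since $q_3$ stays uniformly away from the collision point, while the two-body action on $[0,\epsilon]$ drops by a definite positive amount (independent of $\epsilon$ after rescaling) coming from Lemma \ref{keplerlambertresult}. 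For $\epsilon$ small this yields a path in $H^1([0,1],\chi)$ with admissible boundaries in $Q_{s_1}$ and strictly smaller action than $\mathcal{P}$, contradicting minimality. Hence $\vec{c}_{12}=(1,0)$.

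The binary collision between bodies $2$ and $3$ is handled by the identical argument with $a_2$ playing the role of $a_1$: setting $a_2=0$ gives $q_2(0)=q_3(0)=(a_1,0)$, and perturbing $a_2>0$ produces $\mathbf{r}_{23}(0)=(-3a_2,0)=3a_2\vec{s}$ with $\vec{s}=(-1,0)$, so Lemma \ref{keplerlambertresult} again forces $\vec{c}_{23}=(1,0)$. The main obstacle is really only the one in the previous paragraph: carefully gluing the two-body perturbation back into a full three-body path in $\chi$ while controlling the error in the third body's kinetic and potential contributions; the asymmetric constraint $a_i\geq 0$ is not an obstacle because Lemma \ref{keplerlambertresult} is itself a one-sided (free-end) result and only needs the single direction of perturbation that we have available.
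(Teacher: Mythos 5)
Your proposal is correct and follows essentially the same route as the paper: the paper's proof likewise observes that the ordering constraint $a_1\geq 0$, $a_2\geq 0$ forces $\vec{s}=(-1,0)$ for the free-end perturbation, and then invokes Lemma \ref{keplerlambertresult} together with the local deformation/gluing argument from the proof of Theorem \ref{nobinarycollision} to contradict minimality unless $\vec{c}_{12}=(1,0)$ (resp. $\vec{c}_{23}=(1,0)$). Your write-up simply makes explicit the computation $\mathbf{r}_{12}(0)=(-3a_1,0)$ and the error control for the third body, which the paper leaves implicit by reference.
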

\begin{proof}
Since $a_1\geq 0$ and $a_2 \geq 0$, it implies that the three bodies lie on the $x-$axis at $t=0$ with an order $q_{1x} (0) \leq q_{2x}(0) \leq q_{3x}(0)$. If bodies 1 and 2 collide at $t=0$, then  
$\vec{s}=(-1, 0)$. If $\vec{c}_{12} \neq (1, 0)$, that is, $\langle\vec{c}_{12},\vec{s}\rangle \neq -1$, Lemma \ref{keplerlambertresult} and the proof of Theorem \ref{nobinarycollision} imply that there exists a local deformation which can lower the action of $\mathcal{P}$. Contradiction! 
Therefore, if there is a binary collision between bodies 1 and 2 in the minimizer $\mathcal{P}$, $\vec{c}_{12}$ must be $(1,0)$. Similarly, if bodies 2 and 3 collide at $t=0$ in $\mathcal{P}$,  $\vec{c}_{23}$ must be $(1,0)$. The proof is complete.
\end{proof}

\section{Analysis of binary collisions at $t=0$}\label{binarycollisionat0}
In this section, we prove that in the minimizer $\mathcal{P}$ in \eqref{Henon1001}, the only possible collision is the binary collision between bodies 1 and 2 at $t=0$. And the corresponding solution must be the Schubart orbit in Fig.~\ref{Schubartorbit}. 
 
\subsection{Exclusion of binary collision between bodies 2 and 3 at $t=0$}
In this subsection, we show that if bodies 2 and 3 collide at $t=0$ in the minimizer $\mathcal{P}$, the action $\mathcal{A}(\mathcal{P}) \geq 4.21617$. By Lemma \ref{testpathact}, there exists a test path $\hat{q}(t)$ which has action $\mathcal{A}(\hat{q}(t)) < 3.5383$.  Therefore, $\mathcal{P}$ has no collision between bodies 2 and 3 at $t=0$. 
\begin{lemma}\label{lowerbdd23collide}
The minimizer $\mathcal{P}$ has no binary collision between bodies 2 and 3 at $t=0$.  
\end{lemma}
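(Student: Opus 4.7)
The plan is to argue by contradiction: assuming $\mathcal{P}$ has a binary collision between bodies 2 and 3 at $t=0$, I will derive the lower bound $\mathcal{A}(\mathcal{P})\geq 4.21617$, which contradicts $\mathcal{A}(\mathcal{P})<3.5383$ from Corollary \ref{boundforAP} (based on the test path of Lemma \ref{testpathact}). The first step is to unwind the consequences of the collision hypothesis on the boundary data. The collision $q_2(0)=q_3(0)$ together with $a_1,a_2\geq 0$ and the form of $Q_{s_1}$ in \eqref{Henon1001} forces $a_2=0$, so that $q_1(0)=(-2a_1,0)$ and $q_2(0)=q_3(0)=(a_1,0)$ with $a_1\geq 0$. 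Corollary \ref{180degreeejection} then pins down the ejection direction as $\vec{c}_{23}=(1,0)$, so body 2 ejects in the $+x$ direction away from body 1 while body 3 ejects in the $-x$ direction toward body 1. Lemmas \ref{Vench} and \ref{blowup} in addition provide the parabolic profile $q_2(t)-q_3(t)\sim\gamma_0\,t^{2/3}(1,0)$ as $t\to 0^+$.

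Next, I would pass to Jacobi coordinates $X_1=q_2-q_3$ and $X_2=q_1-(q_2+q_3)/2$, in which the action decomposes as
\[
\mathcal{A}(\mathcal{P})=\int_0^1\Bigl(\tfrac14|\dot X_1|^2+\tfrac13|\dot X_2|^2+\tfrac1{|X_1|}+\tfrac1{|X_2-X_1/2|}+\tfrac1{|X_2+X_1/2|}\Bigr)dt.
\]
The boundary data translate to $X_1(0)=\vec{0}$ with ejection direction $(1,0)$, $X_2(0)=(-3a_1,0)$, $X_1(1)=(-2b_2,0)$, and $X_2(1)=(0,-3b_1)$, the last two of which are orthogonal. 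The $X_1$-subsystem is then a genuine Kepler collision-ejection problem on $[0,1]$ with reduced mass $\mu=1/2$ and coupling $\alpha=1$, to which Gordon's theorem applies directly. To couple the outer motion $X_2$ to a Kepler-like functional, I will use the pointwise inequality
\[
\frac1{|X_2-X_1/2|}+\frac1{|X_2+X_1/2|}\geq\frac{2}{\sqrt{|X_2|^2+|X_1|^2/4}},
\]
which bounds the two cross-potentials in terms of a single effective distance dominated by $|X_2|$ when $|X_1|$ is small.

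The heart of the argument is then to turn these two decoupled estimates into a sharp numerical lower bound. Gordon's theorem applied to the $X_1$-Kepler part already contributes roughly $\tfrac32\pi^{2/3}(1/2)^{1/3}\approx 2.554$. The plan is to split $[0,1]$ at a carefully chosen time $\tau$, use Gordon (or a Lambert-type estimate) for the $X_1$-piece on $[0,\tau]$, combine it on $[\tau,1]$ with an explicit admissible comparison path that joins the parabolic ejection profile to a configuration that maps to the isosceles boundary $Q_{e_1}$, and finally optimize over the free parameters $a_1,b_1,b_2$ and the split time $\tau$. The extra action needed to send body 3 back past body 1 and then rearrange the three masses into the $b_1$-isosceles configuration should push the sum above $4.21617$. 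This quantitative step is the main obstacle: the bound must be numerically tight enough to exceed the test-path upper bound $3.5383$ of Lemma \ref{testpathact}, and it crucially exploits the forced secondary close approach between bodies 1 and 3 that is a direct consequence of the constrained ejection direction $\vec{c}_{23}=(1,0)$. Once the bound is established, comparison with Corollary \ref{boundforAP} produces the contradiction and rules out a binary collision between bodies 2 and 3 at $t=0$ in $\mathcal{P}$.
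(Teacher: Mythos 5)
Your reduction of the boundary data (forcing $a_2=0$, hence $q_2(0)=q_3(0)=(a_1,0)$, $q_1(0)=(-2a_1,0)$, with ejection direction $\vec{c}_{23}=(1,0)$ from Corollary \ref{180degreeejection}) is correct, and your target is the right one: a lower bound exceeding the test-path bound $3.5383$ of Corollary \ref{boundforAP}. But the proof has a genuine gap exactly where you acknowledge it: the quantitative step is never carried out. Decoupling the action into a Kepler functional for $X_1=q_2-q_3$ and a remainder gives, via Gordon, only about $\tfrac{3}{2}\pi^{2/3}2^{-1/3}\approx 2.55$ from the first piece, so you still need the $X_2$-piece $\int_0^1 \tfrac13|\dot X_2|^2+\tfrac{1}{|X_2-X_1/2|}+\tfrac{1}{|X_2+X_1/2|}\,dt$ to contribute more than about $0.99$. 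Your convexity inequality bounds the cross-potential below by $2\bigl(|X_2|^2+|X_1|^2/4\bigr)^{-1/2}$, which is \emph{weaker} than $2/|X_2|$, and $X_2$ need not pass through the origin, so Gordon does not apply to it; the boundary conditions only force $X_2$ to travel from the negative $x$-axis to the $y$-axis with both endpoints free in magnitude. No estimate is given that extracts $0.99$ of action from this, and the proposed scheme (``split at a carefully chosen $\tau$, insert a comparison path, optimize over $a_1,b_1,b_2,\tau$'') is a program, not an argument --- in particular the claimed ``forced secondary close approach between bodies 1 and 3'' is a heuristic that is never converted into an inequality. As written, the contradiction is not established.

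The paper reaches the bound $4.21617$ by an entirely different, non-quantitative route that you may want to compare. Since the collision forces $a_2=0$, the minimizer $\mathcal{P}$ is also a minimizer for the boundary family $Q_{s_2}$ in \eqref{Qs2Qe2}; one then \emph{relaxes} the collision by enlarging the starting set to $Q_{s_3}$ in \eqref{Qs3Qe3}, where bodies $2$ and $3$ sit at $(a_1,\pm c_1)$. The relaxed minimizer $\widetilde{\mathcal{P}}$ satisfies $\mathcal{A}(\widetilde{\mathcal{P}})\le\mathcal{A}(\mathcal{P})$, is collision-free by Theorem \ref{nobinarycollision} (the two boundary sets are now full linear subspaces meeting only at the origin), and, using the first-variation boundary conditions \eqref{velocityat0Qs3}--\eqref{velocityat1Qs3}, extends via \eqref{extensioninQs3} to a $4$-periodic loop satisfying $q(t+2)=-q(t)$. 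The Long--Zhang theorem \cite{Long} identifies the action minimizer in that loop space as the Lagrange equilateral circular solution, whence $\mathcal{A}(\widetilde{\mathcal{P}})\ge\tfrac14\mathcal{A}_{Lagrangian}\approx 4.21617>3.5383$. This replaces the hard numerical estimate your proposal would require with a structural symmetry argument plus a known classification result; if you wish to salvage your direct approach, you would need to supply the missing lower bound for the $X_2$-subsystem explicitly, which is likely to be delicate.
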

\begin{proof}
Assume that in $\mathcal{P}$, bodies 2 and 3 collide at $t=0$. Note that $\mathcal{P}$ is the action minimizer of \eqref{minimizerhenon}, with $Q_{s_1}$, $Q_{e_1}$ and $\mathcal{S}$ defined by \eqref{Henon1001} and \eqref{setdef}. It follows that $\mathcal{P}$ is also an action minimizer under the following settings:
\begin{equation}\label{minprob2}
\inf_{ \{ (a_1, \, b_1, \, b_2) \in  \mathcal{S}_{20}  \} }  \quad \inf_{\{ q(0)= Q_{s_2}, \, q(1)= Q_{e_2}, \, q(t) \in H^1([0, 1], \chi) \} }  \mathcal{A},
\end{equation}
where $\mathcal{A}=\int_{0}^{1} (K+U) \, dt$, 
\begin{equation}\label{Qs2Qe2}
 Q_{s_2} = \begin{bmatrix}
-2a_1   &  0   \\
a_1  &  0     \\
a_1   & 0   
\end{bmatrix}, \qquad \,\,
  Q_{e_2} =  \begin{bmatrix}
0   &  -2b_1    \\
-b_2 &  b_1 \\
b_2   &   b_1
\end{bmatrix},  
\end{equation}
and  $(a_1, \, b_1, \, b_2) \in  \mathcal{S}_{20}$, with 
\begin{equation}\label{defofs2}
\mathcal{S}_{20} = \left\{  a_1 \geq 0, \quad b_1 \in \mathbb{R}, \quad b_2 \in \mathbb{R}  \right\}.  
\end{equation}
By assumption, the action $\mathcal{A}(\mathcal{P})$ is the minimum action for the minimizing problem \eqref{minprob2} with $Q_{s_2}$, $Q_{e_2}$ and $\mathcal{S}_{20}$ defined in \eqref{Qs2Qe2} and \eqref{defofs2}. To find a lower bound of $\mathcal{A}(\mathcal{P})$, we define a new functional space such that  $\mathcal{P}$ is in this space. Let 
\begin{equation}\label{Qs3Qe3}
 Q_{s_3} = \begin{bmatrix}
-2a_1   &  0   \\
a_1  &  c_1     \\
a_1   & -c_1   
\end{bmatrix}, \qquad \,\,
  Q_{e_3} =  \begin{bmatrix}
0   &  -2b_1    \\
-b_2 &  b_1 \\
b_2   &   b_1
\end{bmatrix},  
\end{equation}
and  $(a_1, \, c_1, \, b_1, \, b_2) \in  \mathbb{R}^4$. We consider the following minimizing problem
\begin{equation}\label{minprob3}
\inf_{ \{ (a_1, \, c_1, \, b_1, \, b_2) \in  \mathbb{R}^4  \} }  \quad \inf_{\{ q(0)= Q_{s_3}, \, q(1)= Q_{e_3}, \, q(t) \in H^1([0, 1], \chi) \} }  \mathcal{A}.
\end{equation}
By Theorem \ref{Thm1.1}, there exists a minimizer $\widetilde{\mathcal{P}}$, such that 
\[ \mathcal{A}(\widetilde{\mathcal{P}})=  \inf_{ \{ (a_1, \, c_1, \, b_1, \, b_2) \in  \mathbb{R}^4  \} }  \quad \inf_{\{ q(0)= Q_{s_3}, \, q(1)= Q_{e_3}, \, q(t) \in H^1([0, 1], \chi) \} }  \mathcal{A}.\]
Note that under the boundary settings, it is clear that $\mathcal{A}(\mathcal{P}) \geq \mathcal{A}(\widetilde{\mathcal{P}})$. The rest of the proof is to find a lower bound for $\mathcal{A}(\widetilde{\mathcal{P}})$.\\
By Theorem \ref{nobinarycollision}, the minimizer $\widetilde{\mathcal{P}}$ has no binary collision. Since the circular Lagrangian orbit satisfies the boundary conditions $Q_{s_3}$ and $Q_{e_3}$, it follows that 
\begin{equation}\label{lagrangianact}
\mathcal{A}( \widetilde{\mathcal{P}}  )  \leq  \frac{1}{4} \mathcal{A}_{Lagrangian}= 3\times \frac{3}{2} (2 \pi)^{\frac{2}{3}} \left( \frac{\sqrt{3}}{3}\right)^{\frac{2}{3}} 4^{-\frac{2}{3}} \approx 4.21617,
\end{equation}
where $\mathcal{A}_{Lagrangian}$ is the action of one period of the Lagrangian circular orbit with period 4.
By inequality \eqref{lowerbddtri} in Section \ref{exclusiontotalcol}, the actions of the paths of total collision have a lower bound $\mathcal{A}_{totalcollision} \geq 6.6927$. It implies that $\widetilde{\mathcal{P}}$ has no total collision. Therefore, the minimizer $\widetilde{\mathcal{P}}$ is a solution of the planar three-body problem.

We will show that $\widetilde{\mathcal{P}} \, (t \in [0,1])$ can be extended to a periodic solution with a period $T=4$. Let $\widetilde{q}_i(t)= (q_{ix}(t), \, q_{iy}(t)) \, (i=1, 2, 3, \, t\in [0,1])$ be the coordinates of each body in $\widetilde{\mathcal{P}}$. By formulas of first variation, it is known that 
\begin{equation}\label{velocityat0Qs3}
\dot{\widetilde{q}}_{1x}(0)=0, \qquad \dot{\widetilde{q}}_{2x}(0)= -\dot{\widetilde{q}}_{3x}(0),\qquad \dot{\widetilde{q}}_{2y}(0)=\dot{\widetilde{q}}_{3y}(0);
\end{equation}
\begin{equation}\label{velocityat1Qs3}
 \dot{\widetilde{q}}_{1y}(1)=0, \qquad \dot{\widetilde{q}}_{2y}(1)= -\dot{\widetilde{q}}_{3y}(1),\qquad   \dot{\widetilde{q}}_{2x}(1)=\dot{\widetilde{q}}_{3x}(1).
\end{equation}
By the uniqueness of solution of the initial value problem in an ODE system, the minimizer $\widetilde{\mathcal{P}} \, (t \in [0,1])$ can be extended as follows
\begin{eqnarray}\label{extensioninQs3}
\begin{cases}
 \widetilde{q_1}(t)= \left( -\widetilde{q_{1x}}(2-t), \,\, \,  \widetilde{q_{1y}}(2-t)\right),  &  t \in[1,2];  \\
 \vspace{0.1cm}
 \widetilde{q_2}(t)= \left( -\widetilde{q_{3x}}(2-t), \,\, \,  \widetilde{q_{3y}}(2-t)\right),  &  t \in[1,2];  \\
  \vspace{0.1cm}
  \widetilde{q_3}(t)= \left( -\widetilde{q_{2x}}(2-t), \,\, \,  \widetilde{q_{2y}}(2-t)\right),  &  t \in[1,2];  \\
   \vspace{0.1cm}
 \widetilde{q_1}(t)= \left( -\widetilde{q_{1x}}(t-2), \,\, \,  -\widetilde{q_{1y}}(t-2)\right),  &  t \in[2,4];\\
  \vspace{0.1cm}
  \widetilde{q_2}(t)= \left( -\widetilde{q_{2x}}(t-2), \,\, \,  -\widetilde{q_{2y}}(t-2)\right),  &  t \in[2,4];\\
   \vspace{0.1cm}
 \widetilde{q_3}(t)= \left( -\widetilde{q_{3x}}(t-2), \,\, \,  -\widetilde{q_{3y}}(t-2)\right),  &  t \in[2,4].
\end{cases}
\end{eqnarray}
Let $\widetilde{q}(t)= \begin{bmatrix}
\widetilde{q_1}(t)\\
\widetilde{q_2}(t)\\
\widetilde{q_3}(t)
\end{bmatrix}$ and $q(t) = \begin{bmatrix}
q_{1x}(t) & q_{1y}(t)\\
q_{2x}(t) & q_{2y}(t)\\
q_{3x}(t) & q_{3y}(t)
\end{bmatrix}$. In \cite{Long}, Long and Zhang showed that in the planar three-body problem, the action minimizer in the loop space 
\[ \Omega= \left\{ q(t) \in H^1(\mathbb{R}, \chi) \,  \bigg|  \, q(t)= q(t+T), \quad q(t)= -q(t+T/2), \, \, \, \, \forall  \,\, t \in \mathbb{R} \right\}\]
 must be the non-collision equilateral triangle circular periodic solutions. Note that $T=4$. Its action is
 \[\mathcal{A}_{Lagrangian}=3\times \frac{3}{2} (2 \pi)^{\frac{2}{3}} \left(\frac{\sqrt{3}}{3}\right)^{\frac{2}{3}} 4^{\frac{1}{3}}  \approx 16.8647. \]
 Since $\widetilde{q}(t) \in \Omega$, it follows that
 \begin{equation}\label{Alowbddlag}
 \mathcal{A}(\widetilde{q}(t)\, (t \in[0,1]))=  \mathcal{A}(\widetilde{\mathcal{P}}) \geq \frac{1}{4} \mathcal{A}_{Lagrangian} \approx 4.21617.
 \end{equation}
 Inequalities \eqref{lagrangianact} and \eqref{Alowbddlag} imply that 
 \[ \mathcal{A}(\widetilde{\mathcal{P}}) =  \frac{1}{4} \mathcal{A}_{Lagrangian} \approx  4.21617.   \]
Therefore, 
\[  \mathcal{A}(\mathcal{P}) \geq \mathcal{A}(\widetilde{\mathcal{P}})  \approx  4.21617. \]
By Corollary \ref{boundforAP}, the lower bound of the minimizer $\mathcal{P}$ is $ \mathcal{A}(\mathcal{P}) <3.5383<4.21617$. Contradiction!\\
Hence, the minimizer $\mathcal{P}$ has no binary collision between bodies 2 and 3 at $t=0$. The proof is complete.
\end{proof}

\subsection{Binary collision between bodies 1 and 2 at $t=0$}\label{5.2}
By Theorem \ref{nobinarycollision} and Lemma \ref{lowerbdd23collide}, the only possible collision in the minimizer $\mathcal{P}$ is the collision between bodies 1 and 2 at $t=0$. In this subsection, we show that the minimizer $\mathcal{P}$ with a binary collision must coincide with the Schubart orbit \cite{Sch} in Fig.~\ref{Schubartorbit}, which has been shown to exist in \cite{Moe, Ven1, Yan3}.

Assume that bodies 1 and 2 collide at $t=0$ in $\mathcal{P}$. By Corollary \ref{180degreeejection}, the direction of collision $\vec{c}_{12}=\lim_{t \to 0^{+}}\frac{q_1-q_2}{|q_1-q_2|}$ must be $(1,0)$. By Yu's work \cite{Yu2}, more information can be shown about the colliding pair: bodies 1 and 2. After taking away the movement of the center of mass of this colliding pair, the asymptotic behavior of $q_1-q_2$ at $t=0$ satisfies:
\begin{lemma}\label{asympvec}
For $t>0$ small enough, if the $y$ component of $q_1-q_2$ is not $0$, we have
\[q_1-q_2 = ( a t^{2/3}+o(t^{2/3}), \, \,  b t^{\alpha_0}+o(t^{\alpha_0}) ), \]
where $\alpha_0 >\frac{5}{3}$, $a\neq 0, b\neq 0$.
Furthermore, the $y-$velocity satisfies
\[ \lim_{t \to 0^{+}} \left(\dot{q}_{1y}-\dot{q}_{2y} \right)=0.   \]
\end{lemma}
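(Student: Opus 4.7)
The plan is to analyze the relative vector $\mathbf{r}(t) := q_1(t) - q_2(t)$ as a perturbed Kepler motion, following the strategy of \cite{Yu2}. Since Corollary \ref{boundforAP} excludes total collision and the only collision at $t=0$ involves bodies 1 and 2, body 3 stays uniformly separated from the colliding pair on a small interval $(0,\delta]$. Subtracting Newton's equations for $q_1$ and $q_2$ yields
\[
\ddot{\mathbf{r}} = -\frac{(m_1+m_2)\,\mathbf{r}}{|\mathbf{r}|^3} + G(t),
\]
where $G(t) = m_3\bigl[(q_3-q_1)/|q_3-q_1|^3 - (q_3-q_2)/|q_3-q_2|^3\bigr]$ is smooth near $t=0$. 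A first-order Taylor expansion of $G$ in $\mathbf{r}$ around the pair's center of mass gives $G(t) = A(t)\,\mathbf{r}(t) + O(|\mathbf{r}|^2)$ for a smooth matrix-valued function $A(t)$.

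First I will invoke the classical Sperling--Sundman asymptotic for isolated binary collisions to obtain
\[
|\mathbf{r}(t)| = \gamma_0\, t^{2/3}\bigl(1 + o(1)\bigr), \qquad \gamma_0 = \bigl(\tfrac{9(m_1+m_2)}{2}\bigr)^{1/3}.
\]
Combined with $\vec{c}_{12}=(1,0)$ from Corollary \ref{180degreeejection}, this forces $x(t) = \gamma_0\, t^{2/3} + o(t^{2/3})$ and $y(t) = o(t^{2/3})$, which accounts for the first component of the claimed expansion with $a=\gamma_0\neq 0$.

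Next I will linearize the $y$-equation about this parabolic ejection. Using $\gamma_0^3 = 9(m_1+m_2)/2$ together with $r = x\sqrt{1+(y/x)^2}\sim \gamma_0 t^{2/3}$, the leading-order equation becomes the Euler-type ODE
\[
t^2\,\ddot y + \tfrac{2}{9}\, y = t^2\, G_y(t) + (\text{nonlinear corrections of higher order}),
\]
whose homogeneous solutions are $t^{2/3}$ and $t^{1/3}$. A non-zero $t^{2/3}$ coefficient would tilt the collision direction away from $(1,0)$, and a non-zero $t^{1/3}$ coefficient would contradict $y/x\to 0$ since $t^{1/3}/t^{2/3}\to\infty$ as $t\to 0^+$; hence both homogeneous modes vanish. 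The forcing $G_y(t) = [A(t)\mathbf{r}(t)]_y + O(t^{4/3}) = c\,\gamma_0\, t^{2/3} + O(t^{4/3})$ is smooth, and the ansatz $y=bt^{\alpha_0}$ forces $\alpha_0 = 8/3 > 5/3$ with $b\neq 0$ whenever $y\not\equiv 0$. Differentiating yields $\dot y = b\alpha_0\, t^{\alpha_0-1} + o(t^{\alpha_0-1})$; since $\alpha_0 - 1 > 2/3$, this gives $\lim_{t\to 0^+}(\dot q_{1y}-\dot q_{2y}) = \lim_{t\to 0^+}\dot y(t) = 0$.

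The main obstacle will be the rigorous control of the nonlinear remainders when upgrading the formal Euler-type balance to a genuine asymptotic expansion. I would handle this via a bootstrap argument after a Levi-Civita--type regularization of the Kepler part, showing that the nonlinear errors inherit the order set by the smooth forcing and therefore neither perturb the leading exponent $\alpha_0$ nor the coefficient $b$; the detailed energy and moment estimates of \cite{Yu2} provide the rigorous implementation of this scheme.
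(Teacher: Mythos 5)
Your proof is a plausible sketch, but it takes a genuinely different route from the paper. The paper's proof is much shorter: it cites Proposition 4.2 of \cite{Yu2} for the two limits $\lim_{t\to 0^+}\frac{q_1-q_2}{|q_1-q_2|}=(1,0)$ and $\lim_{t\to 0^+}\frac{d}{dt}\bigl(\frac{q_1-q_2}{|q_1-q_2|}\bigr)=(0,0)$, takes the expansion $q_1-q_2=(at^{2/3}+o(t^{2/3}),\,bt^{\alpha_0}+o(t^{\alpha_0}))$ with $\alpha_0>2/3$ as granted by the regularizability of an isolated binary collision, and then simply computes $\frac{d}{dt}\bigl(\frac{q_1-q_2}{|q_1-q_2|}\bigr)$ under this ansatz; the vanishing of that derivative forces $2\alpha_0-7/3>0$ and $\alpha_0-5/3>0$, hence $\alpha_0>5/3$, and the velocity claim follows. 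You instead derive the perturbed Kepler equation for $\mathbf{r}=q_1-q_2$, invoke Sperling--Sundman for the radial rate, and analyze the transverse component via the Euler equation $t^2\ddot y+\frac29 y=t^2G_y$, whose indicial exponents $2/3$ and $1/3$ are both killed by $y=o(t^{2/3})$; this buys you a sharper (generic) exponent $\alpha_0=8/3$, whereas the paper only needs and only gets $\alpha_0>5/3$. Two caveats on your version: the exact value $8/3$ fails if the off-diagonal entry of $A(0)$ vanishes (the forcing then starts at higher order), though the lemma's weaker claim $\alpha_0>5/3$ survives in every case; and the passage from the formal Euler-equation balance to a genuine differentiable asymptotic expansion is precisely the hard analytic content, which you defer to a bootstrap and to \cite{Yu2} --- the paper sidesteps this by leaning directly on Yu's already-proved limits, at the cost of assuming the power-law form of the $y$-component without detailed justification. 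Both arguments ultimately rest on \cite{Yu2}; yours is more self-contained in structure but leaves more to be checked.
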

\begin{proof}
The proof is based on Proposition 4.2 in \cite{Yu2}. Actually, it is shown that
\begin{equation}\label{limit1}
 \lim_{t \to 0^{+}} \frac{q_1-q_2}{|q_1-q_2|} = (1, \, 0), 
 \end{equation}
 \begin{equation}\label{limit2}
\lim_{t \to 0^{+}}  \frac{d}{dt} \left( \frac{q_1-q_2}{|q_1-q_2|} \right)= (0, \, 0).
 \end{equation} 
It is known \cite{SK} that an isolated binary collision in the three-body problem can be regularized. By \eqref{limit1}, it implies that for small enough $t>0$, there exists $\alpha_0>2/3$, such that
\[q_1-q_2 = ( a t^{2/3}+o(t^{2/3}), \, \,  b t^{\alpha_0}+o(t^{\alpha_0}) ). \]
Hence, if $b \neq 0$,
\[ \frac{d}{dt} \left( \frac{q_1-q_2}{|q_1-q_2|} \right)= \left( \frac{-ab^2 t^{2 \alpha_0 - 7/3}(3\alpha_0 -2) }{3 (a^2+ b^2 t^{2 \alpha_0 - 4/3})^{3/2}} +o(t^{2 \alpha_0 - 7/3}), \, \,      \frac{-ab^2 t^{ \alpha_0 - 5/3}(3\alpha_0 -2) }{3 (a^2+ b^2 t^{2 \alpha_0 - 4/3})^{3/2}} +o(t^{\alpha_0 - 5/3})\right). \]
By \eqref{limit2}, it follows that
\[ 2 \alpha_0 - 7/3 >0, \quad  \alpha_0 - 5/3 >0. \]
Therefore, 
\[  \alpha_0 > 5/3.  \]
It follows that 
\begin{equation}\label{velocity12ycol}
 \lim_{t \to 0^{+}} \left(\dot{q}_{1y}-\dot{q}_{2y} \right)=0.   
 \end{equation}
If the $y$ component of $q_1-q_2$ is $0$, it is clear that \eqref{velocity12ycol} holds. The proof is complete. 
\end{proof}

In order to prove that the minimizing path $\mathcal{P}$ must coincide with the Schubart orbit, we introduce a geometric argument to study the minimizer $\mathcal{P}$ connecting a collision configuration $Q_{s_4}$ in \[Q_{S_4}\equiv \left\{Q_{s_4}=\begin{bmatrix}
  -a_2  & 0 \\
     -a_2 & 0 \\
    2a_2 & 0
\end{bmatrix} \bigg| \, a_2 \geq 0 \right \}\] and an isosceles triangle configuration $Q_{e_1}$ in \[Q_{E_1}\equiv \left\{Q_{e_1}=\begin{bmatrix}
0   &  -2b_1    \\
-b_2 &  b_1 \\
b_2   &   b_1
\end{bmatrix} \bigg| \, b_1\in \mathbb{R}, b_2\in \mathbb{R} \right\}.\] 
The minimizer $\mathcal{P}$ satisfies
\begin{equation}\label{minPact}
 \mathcal{A}(\mathcal{P})= \inf_{ \{ q \in H^1([0,1], \chi) | q(0) \in Q_{S_4}, \, q(1) \in Q_{E_1} \} } \mathcal{A}. 
 \end{equation}
This geometric argument is first introduced in \cite{YK}, in which we applied it to show that the retrograde double-double orbit in the parallelogram equal-mass four-body problem has a smaller action than the prograde double-double orbit. Furthermore, the same argument can be applied to study the retrograde orbits and the prograde orbits \cite{Chen2, Chen1} in the planar three-body problem with mass $M=[1, \, 1, \, m]$. We can show that for a given period, the retrograde orbit has a smaller action than the prograde orbit. 

Our argument starts with the Jacobi coordinates of the $3$-body problem with equal masses:
\begin{equation*}
  Z_1=q_1-q_2,\ \ \ \ Z_2=q_3-\frac{q_1+q_2}{2}.
\end{equation*}
\begin{figure}[ht]
    \begin{center}
   \psset{xunit=2cm,yunit=2cm}
\begin{pspicture}(-1,-1)(1,1)
\psline[linewidth=1pt, linecolor=blue](-0.3,0.3)(-0.07, 0.07)
\psline[linewidth=1pt, linecolor=blue](-0.8,0.15)(0.2,0.45)
\psdots[dotsize=4pt](0.4,-0.4)(-0.8,0.15)(0.2,0.45)(-0.3,0.3)(-0.07, 0.07)
\psaxes[linewidth=1.5pt]{->}(-0.07, 0.07)(-0.95, -0.85)(0.95, 0.85)
\psline[linewidth=1pt, linecolor=blue, linestyle=dashed]{->}(-0.07, 0.07)(0.93, 0.37)
\psline[linewidth=1pt, linecolor=blue, linestyle=dashed]{->}(-0.07, 0.07)(0.63, -0.63)
\rput(1, -0.1){$x$}
\rput(0.1, 0.85){$y$}
\rput(0.98, 0.47){\textcolor{blue}{$Z_1$}}
\rput(0.5, -0.73){\textcolor{blue}{$Z_2$}}
\rput(-0.17, -0.1){0}
\rput(-0.95, 0.25){$q_2$}
\rput(0.38, 0.55){$q_1$}
\rput(0.55, -0.3){$q_3$}
  \end{pspicture}
   \end{center}
 \caption{ \label{Jacobifigure}The Jacobi coordinates, where $Z_1$, $Z_2$ are vectors starting at the origin and they are in dashed lines. }
\end{figure}
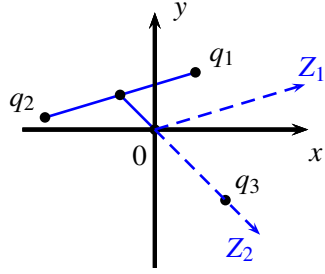  
Because the center of mass is fixed at $0$, the coordinates of $q_i \, (i=1,2,3)$ can be written as
\begin{align*}
  & q_1 = -\frac{1}{3}Z_2+\frac{1}{2}Z_1,  \qquad  q_2 = -\frac{1}{3}Z_2-\frac{1}{2}Z_1, \qquad   q_3 = \frac{2}{3}Z_2.
\end{align*}
The action can then be written as
\begin{align}
  \mathcal{A}(q)&=\int_0^1 \frac{1}{2}\sum_{i=1}^3|\dot{q}_i|^2+U(q) \, dt,\nonumber \\
  &=\int_0^1  K(Z_1, \, Z_2)+U(Z_1, \, Z_2) \, dt \label{actioninJacobi},\\
  &\equiv \mathcal{A}(Z_1,Z_2),\nonumber
\end{align}
where
\begin{equation}\label{Upoten}
K(Z_1, \, Z_2)=\frac{1}{4}|\dot{Z}_1|^2+\frac{1}{3}|\dot{Z}_2|^2, \, \quad \, U(Z_1, \, Z_2)= \frac{1}{|Z_1|}+ \frac{1}{|Z_2+\frac{1}{2} Z_1|}+\frac{1}{|Z_2-\frac{1}{2} Z_1|}.
\end{equation}
By Marchal \cite{Ma1} and Chenciner \cite{CA2} and the argument in Section \ref{revisitkepler}, the minimizing path is collision-free for $t \in (0, 1]$. Hence, $Z_1\not=0$ for $t \in (0,1]$. By the boundary setting in $Q_{S_4}$, it implies that $Z_1(0)=0$ and $Z_2(0)=(3a_2, 0) \not= 0$ is on the positive $x$-axis. 
  
The angle $\beta=\beta(Z_1, \, Z_2)$ between the two vectors $Z_1$ and $Z_2$ can be defined as follows:
\[  \beta= \beta(Z_1, \, Z_2) = \arccos{\frac{< Z_1, \, Z_2 >}{|Z_1|||Z_2|}}, \]
whenever both $Z_1 \not= 0$ and $Z_2 \not= 0$ hold. Then we can define an angle $\Delta\theta= \Delta\theta(Z_1, \, Z_2)$ by
\begin{equation}\label{deltatheta}
  \Delta\theta= \Delta\theta(Z_1, \, Z_2)= \left\{\begin{aligned}
                        & \beta,\ \ \text{if}  \ \ \beta\le\frac{\pi}{2}; \\
                        & \pi-\beta,\ \ \text{if}\ \   \beta>\frac{\pi}{2}.\\
                     \end{aligned}
                     \right.
\end{equation}
It is clear that $\Delta\theta\in [0, \pi/2]$. Geometrically, $\Delta\theta$ describes the acute angle formed by the two straight lines spanned by the two nonzero vectors $Z_1$ and $Z_2$ respectively. 

For convenience, in the Cartesian $xy$ coordinate system, the $i$-th quadrant is denoted by $ \mathsf{Q}_i \, (i=1, 2, 3, 4)$, while its closure is denoted by $\overline{\mathsf{Q}_i}$. For example, $\mathsf{Q}_1=\{ (x, \,  y) \, |\, x>0,\,  y>0 \}$ and $\overline{\mathsf{Q}_1}=\{(x, \, y) \, | \, x\geq 0, \, y \geq 0\}$.

Given $t\in (0,1]$, if $Z_i= Z_i(t) \not=  0, \ (i=1,2)$, a formula of $U(Z_1,\, Z_2)=U(Z_1(t), \, Z_2(t))$ in terms of $|Z_1|$, \, $|Z_2|$ and $\Delta\theta$ can be derived by the law of cosines:
\begin{equation}\label{formulaofUinz1z2theta}
\begin{split}
U(Z_1, \, Z_2)&= \frac{1}{|Z_1|}+ \frac{1}{|Z_2+\frac{1}{2} Z_1|}+\frac{1}{|Z_2-\frac{1}{2} Z_1|} \\
&=  \frac{1}{|Z_1|}+ \frac{1}{\sqrt{\frac{1}{4} |Z_1|^2+ |Z_2|^2 + |Z_1||Z_2| \cos (\Delta \theta) }}+ \frac{1}{\sqrt{\frac{1}{4} |Z_1|^2+ |Z_2|^2 - |Z_1||Z_2| \cos (\Delta \theta) }}. 
\end{split}
\end{equation}
By \eqref{formulaofUinz1z2theta}, $U(Z_1,Z_2)$ is a function of three variables: $|Z_1|$, $|Z_2|$ and $\Delta\theta$ when both $Z_1 \not= 0$ and $Z_2 \not= 0$ hold. Let $U(|Z_1|, \, |Z_2|, \, \Delta\theta) \equiv U(Z_1, \, Z_2)$.  Indeed, $U(|Z_1|, \, |Z_2|, \, \Delta\theta)$ satisfies the following property.

\begin{proposition}\label{Uintheta}
Fix $|Z_1| \neq 0$ and $|Z_2| \neq 0$ and assume that the potential energy $U(|Z_1|,|Z_2|, \Delta\theta)=U(Z_1, \, Z_2)$ in \eqref{formulaofUinz1z2theta} is finite. Then $U(|Z_1|, \, |Z_2|, \, \Delta\theta)$ is a strictly decreasing function with respect to $\Delta\theta$.
\end{proposition}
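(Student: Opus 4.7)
The plan is to reduce the claim to a one-variable calculus exercise in the cosine. Fix $|Z_1|>0$ and $|Z_2|>0$, and set $A=\tfrac14|Z_1|^2+|Z_2|^2$ and $B=|Z_1|\,|Z_2|$, so that formula \eqref{formulaofUinz1z2theta} becomes
$$U(|Z_1|,|Z_2|,\Delta\theta)=\frac{1}{|Z_1|}+\bigl(A+B\cos(\Delta\theta)\bigr)^{-1/2}+\bigl(A-B\cos(\Delta\theta)\bigr)^{-1/2}.$$
Because $c=\cos(\Delta\theta)$ is a strictly decreasing bijection from $[0,\pi/2]$ onto $[0,1]$, it is enough to show that $g(c):=(A+Bc)^{-1/2}+(A-Bc)^{-1/2}$ is strictly increasing on $[0,1]$.

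The first step is to verify the implicit constraint coming from the finiteness hypothesis, namely $A-Bc>0$. Viewed as a quadratic in $|Z_1|$, the expression $\tfrac14|Z_1|^2-|Z_1|\,|Z_2|\,c+|Z_2|^2$ has discriminant $|Z_2|^2(c^2-1)\le 0$ on $c\in[0,1]$, and vanishes only when $c=1$ and $|Z_1|=2|Z_2|$. Geometrically, this boundary case forces body $3$ to coincide with body $1$ or body $2$, giving $U=+\infty$. So under the assumption that $U$ is finite, one has $A-Bc>0$ throughout the range.

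The second step is a direct differentiation. A short computation yields
$$g'(c)=\frac{B}{2}\bigl[(A-Bc)^{-3/2}-(A+Bc)^{-3/2}\bigr].$$
For $c\in(0,1]$ we have $0<A-Bc<A+Bc$, and $x\mapsto x^{-3/2}$ is strictly decreasing on $(0,\infty)$, so $g'(c)>0$. Thus $g$ is strictly increasing on $[0,1]$, which translates back to strict monotonicity of $U$ in $\Delta\theta$ on $[0,\pi/2]$.

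There is no serious obstacle: once the law-of-cosines reformulation \eqref{formulaofUinz1z2theta} is in place, everything reduces to elementary one-variable calculus. The only delicate point is the discriminant check ensuring positivity of $A-Bc$; without it the expression for $g'$ would become singular precisely at the degenerate configuration $|Z_1|=2|Z_2|$, $\Delta\theta=0$, which is exactly the binary collision that the finiteness hypothesis excludes.
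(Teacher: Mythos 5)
Your proof is correct and follows essentially the same route as the paper: both differentiate the law-of-cosines expression \eqref{formulaofUinz1z2theta} and compare the two terms $(A\pm B\cos\Delta\theta)^{-3/2}$, the only cosmetic difference being that you substitute $c=\cos(\Delta\theta)$ first (which cleanly removes the $\sin(\Delta\theta)$ factor that makes the paper's derivative vanish at $\Delta\theta=0$). Your explicit discriminant check that $A-Bc>0$ under the finiteness hypothesis is a small but welcome addition that the paper leaves implicit.
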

\begin{proof}
Fixing $|Z_1|$ and $|Z_2|$ and taking the derivative of $U(|Z_1|, \, |Z_2|, \,  \Delta\theta)$ in \eqref{formulaofUinz1z2theta} with respect to $\Delta\theta$, it follows that 
\begin{eqnarray*}
& &\frac{\partial   U(|Z_1|,\, |Z_2|, \, \Delta\theta)}{\partial \Delta\theta}\\
&=& \frac{\frac{1}{2} |Z_1||Z_2|\sin(\Delta\theta )}{\left[\frac{1}{4} |Z_1|^2+ |Z_2|^2 + |Z_1||Z_2| \cos (\Delta \theta) \right]^\frac{3}{2}}- \frac{\frac{1}{2} |Z_1||Z_2|\sin(\Delta\theta )}{\left[\frac{1}{4} |Z_1|^2+ |Z_2|^2 - |Z_1||Z_2| \cos (\Delta \theta) \right]^\frac{3}{2}}.
\end{eqnarray*}
Note that $\Delta\theta \in [0,\frac{\pi}{2}]$. It implies that
\[ \frac{\partial   U(|Z_1|,\, |Z_2|, \, \Delta\theta)}{\partial \Delta\theta} \leq 0, \]
and $\frac{\partial   U(|Z_1|,\, |Z_2|,\,  \Delta\theta)}{\partial \Delta\theta}=0$ if and only if $\Delta\theta= 0$ or $\Delta\theta= \pi/2$. The proof is complete.
%
%
%By the definition of Jacobi coordinates, one can easily check that
%\begin{equation*}
%  |q_2-q_3|^2+|q_1-q_3|^2=\frac{1}{2}|Z_1|^2+2|Z_2|^2=c,
%\end{equation*}
%where $c$ is a constant by our assumption. Thus $|q_2-q_3|$ and $|q_1-q_3|$ can be represent by $\sqrt{c}\sin \beta$ and $\sqrt{c}\cos \beta$ for some $\beta\in [0,\frac{\pi}{2}]$. Now we have
%\begin{equation*}
%  U(|Z_1|,|Z_2|, \Delta\theta)=\frac{1}{|Z_1|}+\frac{1}{\sqrt{c}\sin\beta}+\frac{1}{\sqrt{c}\cos\beta}.
%\end{equation*}
%Taking derivative with respect to $\beta$, we find that $U$ is decreasing at $[0,\frac{\pi}{4}]$, increasing at $[\frac{\pi}{4},\frac{\pi}{2}]$ and reach the minimum at $\beta=\frac{\pi}{4}$. Thus $U$ reach the minimum when $|q_{23}|$ and $|q_{13}|$ are equal, which is the same as $\Delta\theta=\frac{\pi}{2}$. And it is clearly that $U(|Z_1|,|Z_2|, \Delta\theta)$ is a strictly decreasing function of $\Delta\theta$.
\end{proof}

In our case, if $(Z_1, \, Z_2)$ is the Jacobi coordinate of the minimizer $\mathcal{P}$ in \eqref{minPact}, it follows that $U(Z_1, \, Z_2)=U(Z_1(t), \, Z_2(t))$ is finite for any $t \in (0,1]$. Let $Z_i=(Z_{ix}, \, Z_{iy}) \, (i=1,2)$. Assume that $Z_1(1) \in \overline{\mathsf{Q}_1}$ and $Z_2(1) \in \overline{\mathsf{Q}_4}$. A new path $(\widetilde{Z}_1, \, \widetilde{Z}_2)=(\widetilde{Z}_1(t), \, \widetilde{Z}_2(t))$ is defined by
\begin{equation}\label{tildeZ1Z2def}
\widetilde{Z}_1(t)= (|Z_{1x}(t)|, \, |Z_{1y}(t)|), \qquad  \widetilde{Z}_2(t)= (|Z_{2x}(t)|, \, -|Z_{2y}(t)|), \qquad \, \, \forall \, t \in [0,1].
\end{equation}
A demonstration of $Z_1$ and $\widetilde{Z}_1$ is shown in Fig.~\ref{reflection}.

It is clear that $\, \widetilde{Z}_1(t) \in \overline{\mathsf{Q}_1} \, $, $\, \widetilde{Z}_2(t) \in \overline{\mathsf{Q}_4} \, $ and $|\widetilde{Z}_i(t)|=|Z_i(t)|$ for all $t \in [0,1]$. And $\int_0^1 K(\widetilde{Z}_1, \, \widetilde{Z}_2) \, dt=\int_0^1 K(Z_1, \, Z_2) \, dt$, where $K(Z_1, \, Z_2)= \frac{1}{4}|\dot{Z}_1|^2+\frac{1}{3}|\dot{Z}_2|^2$ is the kinetic energy.

If $Z_1(t) \not=0$ and $Z_2(t) \not=0$, an angle $\Delta \tilde{\theta}=\Delta \tilde{\theta}(\widetilde{Z}_1, \, \widetilde{Z}_2)$ can be defined as follows:
\begin{equation}\label{deltathetatilde}
  \Delta \tilde{\theta}=\Delta \tilde{\theta}(\widetilde{Z}_1, \, \widetilde{Z}_2)=\left\{\begin{aligned}
                        & \tilde{\beta},\ \ \text{if}  \ \ \beta\le\frac{\pi}{2}; \\
                        & \pi-\tilde{\beta},\ \ \text{if}\ \   \beta>\frac{\pi}{2},\\
                     \end{aligned}
                     \right.
\end{equation}
where $\tilde{\beta}= \tilde{\beta}(\widetilde{Z}_1, \, \widetilde{Z}_2)=\arccos{\frac{ \langle \widetilde{Z}_1, \, \widetilde{Z}_2 \rangle}{|\widetilde{Z}_1|||\widetilde{Z}_2|}}$.

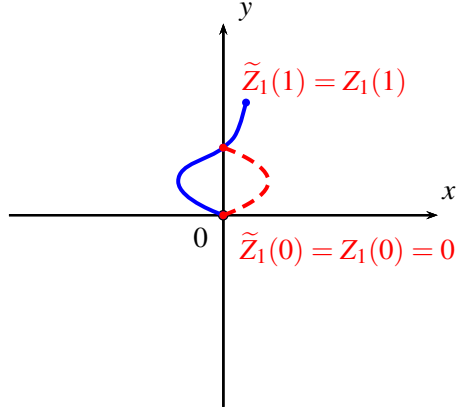
\begin{figure}[ht]
    \begin{center}
\psset{xunit=3cm,yunit=3cm}
\begin{pspicture}(-1,-1)(1,1)
  \psaxes{->}(0, 0)(-0.95, -0.85)(0.95, 0.85)
  \rput(1, 0.1){$x$}
  \rput(0.1, 0.9){$y$}
  \rput(-0.1, -0.1){$0$}

  \psdots[dotsize=4pt](0,0)
  \psdots[dotsize=3pt, linecolor=blue](0, 0)(0, 0.3)(0.1, 0.5)
  \pscurve[linewidth=1.5pt, linecolor=blue](0, 0)(-0.2, 0.15)(0,  0.3)(0.05, 0.35)(0.1, 0.5)
   \psdots[dotsize=3pt, linecolor=red](0, 0)(0, 0.3)
  \pscurve[linewidth=1.5pt, linestyle=dashed, linecolor=red](0, 0)(0.2, 0.15)(0, 0.3)
  %\pscurve[linewidth=1.5pt, linecolor=red](0.4, 0)(0.6, 0.2)(0.2, 0.5)
  
    \rput(0.55, -0.15){\textcolor{red}{$\widetilde{Z}_1(0)= Z_1(0)=0$}}
  \rput(0.45, 0.6){\textcolor{red}{$\widetilde{Z}_1(1)=Z_1(1)$}}
\end{pspicture}
  \end{center}
 \caption{ \label{reflection}A demonstration of $Z_1$ and $\widetilde{Z}_1$, where the blue curve is $Z_1(t) \, (t \in [0,1])$. When $Z_1(t)$ is in the second quadrant, $\widetilde{Z}_1(t)$ is defined to be the dashed red curve which is a reflection of $Z_1(t)$ about the $y$-axis. When $Z_1(t)$ is in the first quadrant, we define $\widetilde{Z}_1(t)=Z_1(t)$. }
 \end{figure}

With the help of Proposition \ref{Uintheta}, we can show that
\begin{lemma}\label{comaprevalueofU}
For any given time $t \in (0,1]$, 
\[U(Z_1(t), Z_2(t)) \geq U(\widetilde{Z}_1(t), \widetilde{Z}_2(t)), \]
where $U(Z_1(t), Z_2(t))=\frac{1}{|Z_1(t)|}+\frac{1}{|Z_2(t)+\frac{1}{2} Z_1(t)|}+\frac{1}{|Z_2(t)-\frac{1}{2} Z_1(t)|}$. The equality holds if and only if $Z_1(t)$ and $Z_2(t)$ are in two adjacent closed quadrants respectively. 

\end{lemma}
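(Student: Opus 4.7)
The plan is to reduce the pointwise inequality to a comparison of the acute inter-line angle $\Delta\theta$, and then to verify the equality clause by a short planar case analysis.

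The first observation is that by \eqref{tildeZ1Z2def} one has $|\widetilde Z_i(t)|=|Z_i(t)|$ for $i=1,2$, so the representation \eqref{formulaofUinz1z2theta} expresses both sides of the claimed inequality with the same $|Z_1|,|Z_2|$ and the only possible difference lies in the acute inter-line angle. Since Proposition \ref{Uintheta} asserts that $U(|Z_1|,|Z_2|,\cdot)$ is strictly decreasing on $[0,\pi/2]$, the lemma reduces to proving
\[
\Delta\tilde\theta(t)\ \geq\ \Delta\theta(t),
\]
with equality iff $Z_1(t),Z_2(t)$ lie in two adjacent closed quadrants. At instants where $Z_1(t)=0$ or $Z_2(t)=0$, I would dispose of the inequality directly from the explicit form of $U$: both sides are $+\infty$ in the first case and equal $5/|Z_1(t)|$ in the second. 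In what follows I assume $Z_1(t),Z_2(t)\neq 0$.

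Next I would parametrize the geometry. Let $\alpha_i\in[0,\pi/2]$ denote the acute angle between the line $\mathbb{R}Z_i(t)$ and the $x$-axis; flipping the signs of individual coordinates preserves this acute angle, so the same $\alpha_i$ is attached to $\widetilde Z_i(t)$. Because $\widetilde Z_1\in\overline{\mathsf Q_1}$ and $\widetilde Z_2\in\overline{\mathsf Q_4}$ make signed angles $+\alpha_1$ and $-\alpha_2$ with the positive $x$-axis,
\[
\tilde\beta=\alpha_1+\alpha_2\in[0,\pi],\qquad \Delta\tilde\theta=\min(\alpha_1+\alpha_2,\ \pi-\alpha_1-\alpha_2).
\]
I would then split cases. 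If at least one of $Z_1,Z_2$ lies on a coordinate axis (say $\alpha_1\in\{0,\pi/2\}$), a direct computation gives $\Delta\theta=\Delta\tilde\theta$, and such a vector belongs to two closed quadrants at once, so $Z_1,Z_2$ can always be placed in two adjacent closed quadrants. If both vectors are off the axes, their slopes are nonzero: when the slopes have opposite signs one reads off $\Delta\theta=\min(\alpha_1+\alpha_2,\pi-\alpha_1-\alpha_2)=\Delta\tilde\theta$, and $Z_1,Z_2$ lie in two adjacent open quadrants; when the slopes have the same sign, $\Delta\theta=|\alpha_1-\alpha_2|$, and the elementary bounds
\[
|\alpha_1-\alpha_2|<\alpha_1+\alpha_2,\qquad 2\max(\alpha_1,\alpha_2)<\pi,
\]
which are strict since $\alpha_1,\alpha_2\in(0,\pi/2)$, yield $\Delta\theta<\Delta\tilde\theta$, while $Z_1,Z_2$ lie either in the same open quadrant or in two diagonally opposite open quadrants, neither of which is an adjacent pair.

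The main obstacle is not the analytic content -- it reduces to Proposition \ref{Uintheta} together with a short planar-angle calculation -- but the careful bookkeeping in the last step that matches the equality condition ``opposite-signed slopes, or at least one vector on an axis'' exactly with the geometric condition ``$Z_1,Z_2$ lie in two adjacent closed quadrants'' in the statement of the lemma.
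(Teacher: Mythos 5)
Your proposal is correct and follows essentially the same route as the paper: both reduce the inequality to comparing the acute inter-line angles $\Delta\theta$ and $\Delta\tilde\theta$ via the monotonicity in Proposition \ref{Uintheta}, introduce the same acute angles $\alpha_1,\alpha_2$ to the $x$-axis, and obtain $\Delta\tilde\theta=\min\{\alpha_1+\alpha_2,\pi-\alpha_1-\alpha_2\}\geq|\alpha_1-\alpha_2|=\Delta\theta$ in the non-adjacent case. Your organization by slope signs is only a cosmetic variant of the paper's case split by quadrant adjacency, and your equality bookkeeping matches the paper's conclusion that equality occurs exactly when $Z_1,Z_2$ can be placed in two adjacent closed quadrants.
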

\begin{proof}
Note that $Z_1=Z_1(t) \not=0$ for all $t \in (0,1]$. The inequality $U(Z_1(t), Z_2(t)) \geq U(\widetilde{Z}_1(t), \widetilde{Z}_2(t))$ is proved pointwisely in the following two cases. 

\textbf{Case 1:} There exists some time $t_0 \in (0,1]$, such that $Z_2(t_0)=0$. In this case, $\Delta \theta$ is NOT well defined. By \eqref{formulaofUinz1z2theta},
\begin{eqnarray*}
U(Z_1, \, Z_2)(t_0)&=& \frac{1}{|Z_1(t_0)|}+\frac{1}{|Z_2(t_0)+\frac{1}{2} Z_1(t_0)|}+\frac{1}{|Z_2(t_0)-\frac{1}{2} Z_1(t_0)|} \\
&=& \frac{5}{|Z_1(t_0)|}.
\end{eqnarray*}
Note that $|Z_1(t_0)|= |\widetilde{Z}_1(t_0)| \not=0$ and $|Z_2(t_0)|= |\widetilde{Z}_2(t_0)| =0$, it follows that
\[U(\widetilde{Z}_1(t_0),\,  \widetilde{Z}_2(t_0))= \frac{5}{|\widetilde{Z}_1(t_0)|}= \frac{5}{|Z_1(t_0)|}= U(Z_1(t_0), \, Z_2(t_0)). \]
That is, whenever $Z_2(t)=0$, we have
\[U(\widetilde{Z}_1(t), \, \widetilde{Z}_2(t))= U(Z_1(t), \, Z_2(t)).\]
\textbf{Case 2:} If $Z_2\not=0$, we can define $\Delta \theta$ as in \eqref{deltatheta} and $\Delta \tilde{\theta}$ as in \eqref{deltathetatilde}. By \eqref{formulaofUinz1z2theta}, 
\[ U(Z_1, \, Z_2)=  \frac{1}{|Z_1|}+\frac{1}{\sqrt{\frac{1}{4} |Z_1|^2+ |Z_2|^2 + |Z_1||Z_2| \cos (\Delta \theta) }}+ \frac{1}{\sqrt{\frac{1}{4} |Z_1|^2+ |Z_2|^2 - |Z_1||Z_2| \cos (\Delta \theta) }}. \]
We show that if $Z_1=Z_1(t)\not=0$ and $Z_2=Z_2(t)\not=0$ are fixed, then \[\Delta \theta(Z_1, \, Z_2)=\Delta \theta \leq \Delta \tilde{\theta}=\Delta \tilde{\theta}(\widetilde{Z}_1, \, \widetilde{Z}_2). \]
In fact, similar to the definition of $\Delta \theta$, we define two angles $\alpha_1$ and $\alpha_2$ as follows
\begin{equation}\label{alpha1alpha2}
\begin{split}
\alpha_1&=\min \left\{ \arccos \frac{\langle Z_1, \vec{s}_1 \rangle}{|Z_1|}, \, \,  \pi-  \arccos \frac{\langle Z_1, \vec{s}_1 \rangle}{|Z_1|} \right\}, \\
\alpha_2&=\min \left\{ \arccos \frac{\langle Z_2, \vec{s}_1 \rangle}{|Z_2|}, \, \, \pi-  \arccos \frac{\langle Z_2, \vec{s}_1 \rangle}{|Z_2|}  \right\},
\end{split}
\end{equation}
where $\vec{s}_1=(1,0)$. It is clear that $\alpha_1, \alpha_2 \in [0, \pi/2]$. 

If $Z_1=Z_1(t)$ and $Z_2=Z_2(t)$ are in two adjacent quadrants respectively, then 
\begin{equation}\label{Z1Z2Deltaangle1}
\Delta \theta=\Delta \theta(Z_1, \, Z_2)= \min \left\{ \alpha_1+\alpha_2,  \, \, \pi-\alpha_1-\alpha_2 \right\}.
\end{equation}
By the definition of $\widetilde{Z}_1$, $\widetilde{Z}_2$ and $\Delta \tilde{\theta}$ in \eqref{deltathetatilde}, it follows that
\begin{equation}\label{Z1Z2Deltaangle1tilde}
\Delta \tilde{\theta}= \Delta \tilde{\theta}(\widetilde{Z}_1, \, \widetilde{Z}_2)= \min \left\{ \alpha_1+\alpha_2, \, \,  \pi-\alpha_1-\alpha_2 \right\}.
\end{equation}
Hence, $\Delta \theta=\Delta \tilde{\theta}$ when $Z_1=Z_1(t)$ and $Z_2=Z_2(t)$ are in two adjacent quadrants respectively. It follows that
\[ U(Z_1(t), \, Z_2(t)) = U(\widetilde{Z}_1(t), \, \widetilde{Z}_2(t)). \]

If $Z_1=Z_1(t)$ and $Z_2=Z_2(t)$ are not in two adjacent quadrants, we have
\begin{equation}\label{Z1Z2Deltaangle2}
\Delta \theta=\Delta \theta(Z_1, \, Z_2)= |\alpha_1-\alpha_2|.
\end{equation}
On the other hand, the angle $\Delta \tilde{\theta}$ is
\begin{equation}\label{Z1Z2Deltaangle2tilde}
\Delta \tilde{\theta}=\Delta \tilde{\theta}(\widetilde{Z}_1, \, \widetilde{Z}_2)= \min \left\{ \alpha_1+\alpha_2, \, \,  \pi-\alpha_1-\alpha_2 \right\}.
\end{equation}
If $\alpha_1 +\alpha_2 \le\frac{\pi}{2}$, then
\begin{equation*}
  \min\{\alpha_1+\alpha_2, \, \pi-\alpha_1-\alpha_2 \}=\alpha_1+\alpha_2 \ge |\alpha_1-\alpha_2|.
\end{equation*}
If $\alpha_1+\alpha_2>\frac{\pi}{2}$, then
\begin{align*}
  \min\{\alpha_1+\alpha_2, \, \pi-\alpha_1-\alpha_2\}&=\pi-\alpha_1-\alpha_2 \\
   &\ge \frac{\pi}{2}- \min \{\alpha_1, \alpha_2 \}\\
   & \geq  |\alpha_1-\alpha_2|.
\end{align*}
It follows that if $Z_1=Z_1(t)$ and $Z_2=Z_2(t)$ are not in two adjacent quadrants, then $\Delta \tilde{\theta} \geq \Delta \theta$ and the equality holds if and only if $\alpha_1=0$ or $\pi/2$ or $\alpha_2=0$ or $\pi/2$. By Proposition \ref{Uintheta}, we have
\[ U(Z_1(t), \, Z_2(t)) \geq U(\widetilde{Z}_1(t), \, \widetilde{Z}_2(t)). \]
Therefore, by the analysis in the two cases, it follows that
\[ U(Z_1(t), \, Z_2(t)) \geq U(\widetilde{Z}_1(t), \, \widetilde{Z}_2(t)),  \qquad  \quad \forall \, t \in (0,1].\]
And the equality $U(Z_1(t), \, Z_2(t)) = U(\widetilde{Z}_1(t), \, \widetilde{Z}_2(t))$ holds if and only if $Z_1=Z_1(t)$ and $Z_2=Z_2(t)$ are in two adjacent closed quadrants respectively. The proof is complete.
\end{proof}

By Lemma \ref{comaprevalueofU}, the action values $\mathcal{A}(Z_1, \, Z_2)$ and $\mathcal{A}( \widetilde{Z}_1,  \, \widetilde{Z}_2)$ in \eqref{actioninJacobi} of the two paths $(Z_1, \, Z_2)$ and $(\widetilde{Z}_1, \, \widetilde{Z}_2)$ satisfy:
\begin{equation*}\label{compareKandU}
\begin{split}
\int_0^1 K(Z_1, \, Z_2) \, dt &= \int_0^1 K(\widetilde{Z}_1,  \, \widetilde{Z}_2) \, dt, \\
U(Z_1,\,  Z_2) & \geq \,  U(\widetilde{Z}_1, \, \widetilde{Z}_2),  \qquad  \qquad \forall \, \, \,  t \in (0,1].
\end{split}
\end{equation*}
It implies that 
\begin{equation}\label{z1z2tildez1z2action}
\mathcal{A}(Z_1, \, Z_2)  \geq  \mathcal{A}( \widetilde{Z}_1, \,  \widetilde{Z}_2).
\end{equation}
On the other hand, recall that the path $(Z_1,Z_2)\in H_0^1([0,1],\mathbb{R}^4)$ minimizers the action $\mathcal{A}$ in \eqref{minPact}. We assume $Z_1(0)=0$ and $Z_2(0) \not= 0$ is on the positive $x$-axis, and $Z_1(1) \in \overline{\mathsf{Q}_1}$ and $Z_2(1) \in \overline{\mathsf{Q}_4}$. By the definition of $( \widetilde{Z}_1, \,  \widetilde{Z}_2)$ in \eqref{tildeZ1Z2def}, it follows that $( \widetilde{Z}_1,  \,\widetilde{Z}_2) \in \{ q \in H^1([0,1], \chi) | q(0) \in Q_{S_4}, \, q(1) \in Q_{E_1} \}$, which implies that
 $\mathcal{A}(Z_1, \, Z_2)  \leq  \mathcal{A}( \widetilde{Z}_1,  \, \widetilde{Z}_2).$ Therefore, by \eqref{z1z2tildez1z2action}, it follows that
\[  \mathcal{A}(Z_1,\,  Z_2) = \mathcal{A}( \widetilde{Z}_1, \,  \widetilde{Z}_2). \]

Note that by the definition of $Q_{e_1}$, $Z_1(1)= (b_2, \, -3 b_1)$ and $Z_2(1)=(\frac{3}{2}b_2,  \, \frac{3}{2}b_1)$. Since the minimizer $(Z_1,Z_2)$ has no collision between bodies 2 and 3, it follows that $b_2 \not=0$ in $Z_i(1) \, (i=1,2)$. Hence, $Z_i(1) \, (i=1,2)$ can not be on the $y$-axis. 

Next, we show (in Theorem \ref{geometrictheorem}) that the minimizing path $(Z_1,Z_2)$ must be the same as the corresponding path $(\widetilde{Z}_1,  \, \widetilde{Z}_2)$ if either $Z_1(1)$ is strictly inside the first quadrant (i.e. not on the axes), or $Z_2(1)$ is strictly inside the fourth quadrant (i.e. not on the axes).
\begin{proposition}\label{z1z2sametimetangent}
If there exists some $t_0 \in (0, 1]$, such that both $Z_1(t_0)$ and $Z_2(t_0)$ are tangent to the axes, then both $Z_1$ and $Z_2$ must stay on the corresponding axes for all $t \in (0, 1]$.
\end{proposition}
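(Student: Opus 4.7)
The plan is to use uniqueness of the Cauchy problem for Newton's equations together with the reflection symmetry of the equal-mass three-body problem.

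First, I recall that by Marchal and Chenciner's results and the argument in Section \ref{revisitkepler}, the minimizer $(Z_1,Z_2)$ is collision-free on $(0,1]$, hence a classical $C^2$ solution of Newton's system there. The reflection construction preceding this proposition confines $(Z_1(t),Z_2(t))\in\overline{\mathsf{Q}_1}\times\overline{\mathsf{Q}_4}$ for every $t\in[0,1]$. The hypothesis that $Z_i(t_0)$ is tangent to an axis I read as: $Z_i(t_0)$ lies on a coordinate axis and $\dot{Z}_i(t_0)$ is parallel to that same axis (for interior $t_0$, the velocity statement is already forced by the quadrant confinement, since a smooth curve in a closed quadrant that touches an axis must be tangent to it).

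Next I split into the four cases determined by which axes $Z_1(t_0)$ and $Z_2(t_0)$ lie on, and in each case identify an isometry $R$ of $\mathbb{R}^2$ composed with a permutation $\sigma$ of the three equal masses that fixes the full initial data at time $t_0$. Using the formulas $q_1=-\tfrac{1}{3}Z_2+\tfrac{1}{2}Z_1$, $q_2=-\tfrac{1}{3}Z_2-\tfrac{1}{2}Z_1$, $q_3=\tfrac{2}{3}Z_2$: in the collinear cases $(x\text{-axis},x\text{-axis})$ or $(y\text{-axis},y\text{-axis})$, all three positions and velocities at $t_0$ lie on one line and are fixed by reflection across that line; in the mixed case $Z_1(t_0)$ on the $x$-axis, $Z_2(t_0)$ on the $y$-axis, the configuration is isosceles with $q_3$ on the $y$-axis and $q_1,q_2$ reflections of each other across the $y$-axis, and the reflection across the $y$-axis composed with the transposition $1\leftrightarrow 2$ fixes both positions and velocities (because $\dot Z_1$ is horizontal and $\dot Z_2$ is vertical); the remaining mixed case is handled by the analogous symmetry across the $x$-axis.

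Since the Newtonian potential with equal masses is invariant under any orthogonal transformation of the plane composed with any permutation of the bodies, the submanifold $\mathcal{M}$ of phase space fixed by the chosen $R\circ\sigma$ is flow-invariant. By uniqueness of the Cauchy problem for Newton's system started at $t_0$ with data already in $\mathcal{M}$, the trajectory remains in $\mathcal{M}$ throughout its interval of smooth existence. Read back through the Jacobi coordinates, this means exactly that $Z_1$ stays on the axis containing $Z_1(t_0)$ and $Z_2$ stays on the axis containing $Z_2(t_0)$ for all $t\in(0,1]$.

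The main obstacle I anticipate is not the uniqueness step, which is standard, but executing the case analysis cleanly and checking that the symmetry truly fixes both positions \emph{and} velocities in the mixed cases; the explicit Jacobi formulas make this verification mechanical. A minor subtlety is the possibility $Z_2(t_0)=0$, where the axis of $Z_2$ is ambiguous, but since $Z_2(0)\neq 0$ and $Z_2$ is continuous, this is handled by restricting first to the open set where $Z_2\neq 0$ and then extending by continuity.
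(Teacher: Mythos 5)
Your proof is correct and follows essentially the same route as the paper: both reduce the claim to the flow-invariance of certain submanifolds (the collinear and isosceles configurations) and conclude by uniqueness of the initial value problem, with your version merely packaging those invariant sets as fixed-point sets of reflection-plus-permutation symmetries rather than writing them out coordinatewise. The only cosmetic caveat is that the quadrant confinement $(Z_1,Z_2)\in\overline{\mathsf{Q}_1}\times\overline{\mathsf{Q}_4}$ you invoke as an aside is actually established \emph{after} this proposition in the paper, but since the tangency you need is already part of the hypothesis, this does not affect the argument.
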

\begin{proof}
 The proof basically follows by the uniqueness of solution of the initial value problem of an ODE system. Note that for $t \in (0, 1]$, $q_i \, (i=1,2,3)$ are the solutions of the Newtonian equations. Without loss of generality, we assume $Z_1(t_0)$ is tangent to the $x$-axis. Note that $Z_1(t_0) \neq 0$ and $Z_1=q_1-q_2$. It follows that
\begin{equation}\label{q1ya2yt0equal}
 q_{1y}(t_0)=q_{2y}(t_0), \qquad \dot{q}_{1y}(t_0)= \dot{q}_{2y}(t_0).  
 \end{equation}
 
 If $Z_2(t_0)$ is also on the $x$-axis and tangent to it, it implies that 
\[q_{3y}(t_0)=0, \qquad \dot{q}_{3y}(t_0)= 0. \]
 Note that the center of mass is fixed at $0$, it follows that 
 \begin{equation}\label{q1q2q3yall0}
 q_{1y}(t_0)=q_{2y}(t_0)=q_{3y}(t_0)=0, \qquad \dot{q}_{1y}(t_0)= \dot{q}_{2y}(t_0)=\dot{q}_{3y}(t_0)= 0.
 \end{equation}
The Newtonian equations and \eqref{q1q2q3yall0} imply that
\begin{equation}\label{qidouble0}
\ddot{q}_{1y}(t_0)= \ddot{q}_{2y}(t_0)=\ddot{q}_{3y}(t_0)= 0.
\end{equation}
%Actually, for any $k \in \mathbb{Z}^{+}$, the $k$-th derivatives satisfy
%\begin{equation}\label{qiall0}
%\frac{d^{(k)} q_{1y}}{dt^{(k)}}(t_0)= \frac{d^{(k)} q_{2y}}{dt^{(k)}}(t_0)=\frac{d^{(k)} q_{3y}}{dt^{(k)}}(t_0)= 0.
%\end{equation}
Since the set \[\{(q_1, \, q_2, \, q_3) \,|\,  q_{1y}=q_{2y}=q_{3y}=0, \, \, \, \dot{q}_{1y}= \dot{q}_{2y}=\dot{q}_{3y}= 0\}\] is invariant, it imply that 
\[  q_{1y}(t)=q_{2y}(t)=q_{3y}(t)=0, \qquad \forall \, t \in (0, 1]. \]
It follows that both $Z_1(t)$ and $Z_2(t)$ stay on the $x$-axis for all $t \in (0, 1]$. 

If $Z_2(t_0)$ is tangent to the $y$-axis, we have 
\begin{equation}\label{q3xequal0}
q_{3x}(t_0)=0, \quad q_{1x}(t_0)=-q_{2x}(t_0), \quad \dot{q}_{3x}(t_0)= 0, \quad \dot{q}_{1x}(t_0)=-\dot{q}_{2x}(t_0). 
\end{equation}
Note that in \eqref{q1ya2yt0equal},
\[ q_{1y}(t_0)=q_{2y}(t_0), \qquad \dot{q}_{1y}(t_0)= \dot{q}_{2y}(t_0).  \]
By the Newtonian equations, \eqref{q1ya2yt0equal} and \eqref{q3xequal0} imply that
\[ \ddot{q}_{3x}(t_0)= 0, \qquad     \ddot{q}_{1y}(t_0)= \ddot{q}_{2y}(t_0). \]
%Similarly, for any $k \in \mathbb{Z}^{+}$, the $k$-th derivatives satisfy
%\[ \frac{d^{(k)} q_{3x}}{dt^{(k)}}(t_0)= 0, \qquad     \frac{d^{(k)} q_{1y}}{dt^{(k)}}(t_0)= \frac{d^{(k)} q_{2y}}{dt^{(k)}}(t_0). \]
Note that the set 
\[\{(q_1, \, q_2, \, q_3) \,|\,  q_{3x}=0,\,  \, q_{1x}=-q_{2x},  \, \,  q_{1y}=q_{2y}, \,\, \dot{q}_{3x}= 0, \, \, \dot{q}_{1x}=-\dot{q}_{2x}, \,  \, \dot{q}_{1y}= \dot{q}_{2y}\}\]
is invariant, it follows hat
\[  q_{3x}(t)=0,  \qquad q_{1y}(t)=q_{2y}(t), \quad \forall \, t \in (0, 1]. \]
It implies that $Z_1$ stays on the $x$-axis and $Z_2$ stays on the $y$-axis for all $t \in (0, 1]$. The proof is complete.
\end{proof}

\begin{proposition}\label{z1t1t2equivonaxes}
If there is a subinterval $[t_1, t_2] \subset (0, 1]$ such that $Z_1(t)$ stays on one of the coordinate axes, then both $Z_1(t)$ and $Z_2(t)$ must stay on the axes respectively for all $t \in (0, 1]$.
\end{proposition}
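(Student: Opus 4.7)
The plan is to promote the hypothesis from a subinterval to all of $(0,1]$ by invoking real-analyticity of the flow. By the results of Marchal and Chenciner and the preceding sections, the minimizer $\mathcal{P}$ is collision-free on $(0,1)$, so each $q_i(t)$ is real-analytic on $(0,1)$; this is the main engine of the argument. Without loss of generality, I would assume $Z_1(t)$ stays on the $x$-axis on $[t_1,t_2]$, so that $q_{1y}(t)\equiv q_{2y}(t)$ throughout this subinterval (the $y$-axis case is symmetric).

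The first step is to compute $\ddot{Z}_{1y}$ from Newton's equations. Because $q_{1y}=q_{2y}$ on $[t_1,t_2]$, the mutual interaction between bodies $1$ and $2$ cancels in $\ddot{q}_{1y}-\ddot{q}_{2y}$, and the remainder factors cleanly as
\begin{equation*}
\ddot{Z}_{1y}(t)=\bigl(q_{3y}(t)-q_{1y}(t)\bigr)\left(\frac{1}{|q_1(t)-q_3(t)|^{3}}-\frac{1}{|q_2(t)-q_3(t)|^{3}}\right).
\end{equation*}
Differentiating the identity $Z_{1y}\equiv 0$ on $[t_1,t_2]$ twice shows this product vanishes identically on $[t_1,t_2]$. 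Each factor is a real-analytic function of $t$ on $(0,1)$, so the identity principle for real-analytic functions forces at least one factor to vanish identically on all of $(0,1)$, giving a dichotomy.

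If the first factor vanishes identically, then $q_{1y}\equiv q_{2y}\equiv q_{3y}$ on $(0,1)$, and the center-of-mass condition $q_{1y}+q_{2y}+q_{3y}=0$ forces $q_{iy}\equiv 0$; in this case every body remains on the $x$-axis, so both $Z_1$ and $Z_2$ stay on the $x$-axis. If instead the second factor vanishes identically, then $q_3(t)$ lies on the perpendicular bisector of the segment $q_1(t)q_2(t)$ for all $t\in(0,1)$; since $Z_1=q_1-q_2$ is horizontal, this bisector is vertical, forcing $Z_2=q_3-(q_1+q_2)/2$ to be vertical, i.e.\ $Z_{2x}\equiv 0$. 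In either case both $Z_1$ and $Z_2$ remain on coordinate axes on $(0,1)$, and continuity extends this to $t=1$. The main obstacle is the analyticity step: one must carefully invoke that collision-free Newtonian trajectories are real-analytic, in order to upgrade the hypothesis from $[t_1,t_2]$ to all of $(0,1]$. Once this is justified, the rest of the proof is a clean case analysis enabled by the factored form of $\ddot{Z}_{1y}$ in Jacobi coordinates.
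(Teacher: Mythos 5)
Your route is genuinely different from the paper's. The paper works only on $[t_1,t_2]$: from Newton's equations it deduces that on this subinterval either $q_{3y}\equiv 0$ or $q_{3x}\equiv 0$, so that $Z_2$ lies on an axis and is tangent to it there, and then it propagates to all of $(0,1]$ by invoking Proposition \ref{z1z2sametimetangent}, i.e.\ by the invariance of the sets $\{q_{1y}=q_{2y}=q_{3y}=0,\ \dot q_{1y}=\dot q_{2y}=\dot q_{3y}=0\}$ and $\{q_{3x}=0,\ q_{1x}=-q_{2x},\ q_{1y}=q_{2y},\ldots\}$ together with uniqueness for the ODE. You instead propagate by real-analyticity and the identity principle, which bypasses Proposition \ref{z1z2sametimetangent} entirely; this is legitimate (the paper itself asserts analyticity of $(Z_1,Z_2)$ on $(0,1]$ in the proof of Theorem \ref{mincolschubart}), and your factorization of $\ddot Z_{1y}$ and the resulting dichotomy (one factor of a product of real-analytic functions on an interval must vanish identically) are correct.

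There is, however, one missing step that your case analysis actually needs: you never apply the identity principle to $Z_{1y}$ itself. Since $Z_{1y}$ is real-analytic on $(0,1)$ and vanishes on $[t_1,t_2]$, it vanishes on all of $(0,1)$; this is needed (a) to conclude that $Z_1$ stays on the $x$-axis for \emph{all} $t\in(0,1]$, which is part of what the proposition asserts, and (b) inside both branches of your dichotomy. In the first branch, the vanishing of $q_{3y}-q_{1y}$ only gives $q_{3y}\equiv q_{1y}$ on $(0,1)$; without $q_{1y}\equiv q_{2y}$ on $(0,1)$ the center-of-mass identity yields $q_{2y}=-2q_{1y}$, not $q_{iy}\equiv 0$. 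In the second branch, the claim that the perpendicular bisector of $q_1q_2$ is vertical uses that $Z_1$ is horizontal, which a priori you only know on $[t_1,t_2]$. Once you insert the one-line observation $Z_{1y}\equiv 0$ on $(0,1)$ (and note that your factored formula for $\ddot Z_{1y}$ is then valid on all of $(0,1)$, not just on $[t_1,t_2]$), both branches close and the proof is complete.
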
 
\begin{proof}
In fact, without loss of generality, we can assume $Z_1(t)$ stays on the $x$-axis in $[t_1, t_2]$. That is, 
\[ q_{1y}(t)=q_{2y}(t), \quad \dot{q}_{1y}(t)= \dot{q}_{2y}(t), \quad  \ddot{q}_{1y}(t)= \ddot{q}_{2y}(t),  \quad \forall \, t \in [t_1, t_2]. \] 
By the Newtonian equations of $q_{1y}$ and $q_{2y}$, we have
\[ \frac{q_{3y}(t)-q_{1y}(t)}{|q_3(t)-q_1(t)|^3}=  \frac{q_{3y}(t)-q_{2y}(t)}{|q_3(t)-q_2(t)|^3}. \]
It follows that 
\[q_{3y}(t)=0, \quad \text{or} \quad |q_{3}(t)-q_{1}(t)|=|q_{3}(t)-q_{2}(t)|, \quad \forall \, t \in [t_1, t_2].   \]
Note that $q_{1y}(t)=q_{2y}(t)$, it implies that
\[q_{3y}(t)=0, \quad \text{or} \quad q_{3x}(t)=0 \, \, \, \text{and} \, \, \, q_{1x}(t)=-q_{2x}(t), \quad \text{or} \quad   q_{1x}(t)=q_{2x}(t), \quad \forall \, t \in [t_1, t_2].   \]
If $q_{1x}(t^{*})=q_{2x}(t^{*})$ for some $t^{*} \in [t_1, t_2]\subset (0,1]$, it implies that $Z_1(t(t^{*}))=0$, which is a collision. Contradiction! So for all $t \in [t_1, t_2]$, we have either $q_{3y}(t)=0$ or $q_{3x}(t)=0$. Note that $q_3$ is smooth in $[t_1, t_2]$, then we have either $q_{3y}\equiv 0$ or $q_{3x}\equiv 0$ for all $t \in [t_1, t_2]$. It implies that $Z_2(t)$ is on one of the axes and tangent to it for all $t \in [t_1, t_2]$. By Proposition \ref{z1z2sametimetangent}, it implies that both $Z_1(t)$ and $Z_2(t)$ must stay on the axes for all $t \in (0, 1]$. The proof is complete.
\end{proof}
Similarly, if $Z_2(t)$ stays on the axes in a closed subinterval of $(0,1]$, then both $Z_1(t)$ and $Z_2(t)$ stay on the axes respectively for all $t \in (0, 1]$.

\begin{lemma}\label{nocrossinginbetween01}
We assume that the path $(Z_1, \, Z_2) \in H_0^1([0,1],\, \mathbb{R}^4)$ minimizes the action $\mathcal{A}$ in \eqref{minPact}. Let $Z_1(0)=0$ and $Z_2(0) \neq 0$ be on the positive $x$-axis, and $\mathcal{A}(Z_1,\,  Z_2) = \mathcal{A}( \widetilde{Z}_1, \,  \widetilde{Z}_2)$. 

If either $Z_1=Z_1(t)$ or $Z_2=Z_2(t)$ does not stay on the axes for all $t \in [0,1]$, then $Z_i \, (i=1,2)$ can not cross the axes for $t \in (0,1)$. 
\end{lemma}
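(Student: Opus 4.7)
The plan is to argue by contradiction, exploiting the fact that both $(Z_1,Z_2)$ and the reflected path $(\widetilde{Z}_1,\widetilde{Z}_2)$ are classical real-analytic solutions of the Newtonian three-body equations on $(0,1)$. Suppose for contradiction that some $Z_i$ crosses a coordinate axis at some $t_0\in(0,1)$; by symmetry I may assume $i=1$ and the axis is the $x$-axis, so $Z_{1y}(t_0)=0$ and $Z_{1y}$ changes sign at $t_0$.

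First I would establish that $\widetilde{Z}$ is a second minimizer of \eqref{minPact} and a classical real-analytic solution. Combining the hypothesis $\mathcal{A}(Z_1,Z_2)=\mathcal{A}(\widetilde{Z}_1,\widetilde{Z}_2)$ with the pointwise identity $K(Z_1,Z_2)=K(\widetilde{Z}_1,\widetilde{Z}_2)$ and the pointwise inequality of Lemma \ref{comaprevalueofU} forces $U(Z_1(t),Z_2(t))=U(\widetilde{Z}_1(t),\widetilde{Z}_2(t))$ for every $t\in(0,1]$. Because $U$ depends only on $|Z_1|,|Z_2|,\Delta\theta$ via \eqref{formulaofUinz1z2theta} and $|Z_i|=|\widetilde{Z}_i|$ for both $i$, this equality forces the unordered pair of distances $\{|Z_2+\tfrac{1}{2}Z_1|,|Z_2-\tfrac{1}{2}Z_1|\}$ to coincide pointwise with $\{|\widetilde{Z}_2+\tfrac{1}{2}\widetilde{Z}_1|,|\widetilde{Z}_2-\tfrac{1}{2}\widetilde{Z}_1|\}$, so $\widetilde{Z}$ is collision-free on $(0,1]$. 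Since $\widetilde{Z}\in P(Q_{S_4},Q_{E_1})$ realizes the infimum in \eqref{minPact}, the standard Euler--Lagrange regularity on compact subintervals of $(0,1)$ makes $\widetilde{Z}$ a classical real-analytic solution of the Newtonian equations there.

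Next I would derive the contradiction from the crossing itself. Because $Z$ is also collision-free and minimizing on $(0,1)$, it too is real-analytic, and the analytic scalar function $Z_{1y}$ vanishes at $t_0$ while changing sign; hence $Z_{1y}$ has a zero of some \emph{odd} order $k\geq 1$ at $t_0$, so that
\[
Z_{1y}(t)=c(t-t_0)^{k}+O\bigl((t-t_0)^{k+1}\bigr),\qquad c\neq 0,
\]
and consequently
\[
\widetilde{Z}_{1y}(t)=|Z_{1y}(t)|=|c|\,|t-t_0|^{k}+O\bigl(|t-t_0|^{k+1}\bigr).
\]
The right-hand side is $C^{k-1}$ but not $C^{k}$ at $t_0$, contradicting the real-analyticity of $\widetilde{Z}_{1y}$ established in the previous step. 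The three remaining cases ($Z_1$ crossing the $y$-axis, and $Z_2$ crossing either axis) are handled by an identical argument with the obvious relabeling. Hence no such crossing can occur on $(0,1)$.

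The step I expect to require the most care is the verification that $\widetilde{Z}$ is collision-free on $(0,1)$, since a priori the coordinate reflection could manufacture a coincidence $|\widetilde{Z}_2\pm\tfrac{1}{2}\widetilde{Z}_1|=0$ where none exists for $Z$; the pointwise identity $U(Z)=U(\widetilde{Z})$ together with the two-parameter law-of-cosines formula \eqref{formulaofUinz1z2theta} is precisely what rules this out. Once $\widetilde{Z}$ is known to be an analytic classical solution, the odd-order vanishing of $Z_{1y}$ at the crossing and the resulting non-smoothness of $|Z_{1y}|$ close the argument in one step.
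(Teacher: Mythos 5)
Your proposal is correct, and it reaches the conclusion by a genuinely different mechanism than the paper. Both arguments begin identically: the hypothesis $\mathcal{A}(Z_1,Z_2)=\mathcal{A}(\widetilde{Z}_1,\widetilde{Z}_2)$ together with the equality of kinetic integrals and the pointwise inequality of Lemma \ref{comaprevalueofU} forces $U(Z(t))=U(\widetilde{Z}(t))$ for all $t\in(0,1]$, and both $(Z_1,Z_2)$ and $(\widetilde{Z}_1,\widetilde{Z}_2)$ are then minimizers, hence collision-free classical solutions on $(0,1)$ (the paper itself invokes their analyticity later, in the proof of Theorem \ref{mincolschubart}). From there the paper uses only first-order smoothness of $\widetilde{Z}$ to conclude that any crossing is non-transversal, and then must work harder: it shows $Z_2\neq 0$ on $(0,1)$, uses the adjacent-quadrant consequence of Lemma \ref{comaprevalueofU} to force $Z_1(t_0)$ and $Z_2(t_0)$ to be tangent to the axes simultaneously, and then applies the invariant-set Propositions \ref{z1z2sametimetangent} and \ref{z1t1t2equivonaxes} to conclude the whole path lies on the axes, contradicting the standing hypothesis. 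You instead push the smoothness obstruction to all orders: a sign change of an analytic, not-identically-vanishing component is an isolated zero of odd order $k$, and $|t-t_0|^k\,g(t)$ with $g$ analytic and nonvanishing is $C^{k-1}$ but not $C^k$, so the corresponding component of $\widetilde{Z}$ (which is $\pm$ that absolute value) cannot be analytic. This one-step argument bypasses both ODE propositions, does not even use the hypothesis that the path leaves the axes, and in fact yields the slightly stronger statement that no coordinate of $Z_1$ or $Z_2$ changes sign on $(0,1)$. The trade-off is architectural rather than logical: the paper's Propositions \ref{z1z2sametimetangent} and \ref{z1t1t2equivonaxes} are reused in Theorem \ref{geometrictheorem} and Theorem \ref{mincolschubart}, so they are not wasted there, whereas your route is shorter and more self-contained for this particular lemma. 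One small simplification available to you: collision-freeness of $\widetilde{Z}$ follows immediately from its being a minimizer of \eqref{minPact} (Marchal--Chenciner), so the careful matching of the unordered pair of distances, while correct, is not strictly needed.
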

\begin{proof}
Note that $\mathcal{A}(Z_1,\,  Z_2) = \mathcal{A}( \widetilde{Z}_1, \,  \widetilde{Z}_2).$ By Lemma \ref{comaprevalueofU},  $Z_1(t)$ and $Z_2(t)$ must be in two adjacent closed quadrants for all $t \in (0,1]$. Furthermore, since both $(Z_1(t), \, Z_2(t))$ and $(\widetilde{Z}_1(t),  \, \widetilde{Z}_2(t))$ are smooth in $(0,1]$, it follows that whenever the path $Z_i \, (i=1,2)$ crosses one of the axes in $(0,1)$, the crossing must be non-transversal. 

By assumption, $Z_1(t) \not=0$ for all $t \in (0,1]$ and $Z_2(0) \not=0$. We first show $Z_2(t) \not=0$ for all $t \in (0,1)$ by contradiction. If not, we assume that $Z_2(t_{20})=0$ for some $t_{20} \in (0,1)$. By the smoothness of both $(Z_1, \, Z_2)$ and $( \widetilde{Z}_1, \,  \widetilde{Z}_2)$, it follows that the velocity $\dot{Z}_2(t_{20})=0$. Since $ \left\{ (q_1, \, q_2, \, q_3) \, |\, q_3=0, \, \dot{q}_3=0 \right\}$ is an invariant set, it implies that $Z_2 \equiv 0$ for all $t \in (0, 1]$. By continuity, $Z_2(0)=0$. Contradict to the assumption! Hence, $Z_i=Z_i(t) \, (i=1,2)$ can not reach the origin when $ t \in (0,1)$.

If $Z_1$ or $Z_2$ crosses the axes at $t=t_{0} \in (0,1)$, we show that both $Z_1(t_0)$ and $Z_2(t_0)$ must be tangent to the axes. In fact, without loss of generality, we assume $Z_1$ has a crossing with the axes. We claim that $Z_2(t_0)$ must be on the axes. Actually, if not, then $Z_2(t_0)$ is in some quadrant $\mathsf{Q}_i \, (i=1,2,3,4)$. By continuity, there exists small enough $\epsilon_0>0$, such that $Z_2$ is inside the same quadrant for all $t \in [t_{0}- \epsilon_0, t_{0}+ \epsilon_0]$. However, $Z_1$ is in two adjacent quadrants in the small interval $[t_{0}- \epsilon_0, t_{0}+ \epsilon_0]$. Since $Z_1(t)$ and $Z_2(t)$ must be in two adjacent closed quadrants for all $t \in (0,1]$, it follows that $Z_1(t)$ stays on the axes in one of the two intervals: $[t_{0}- \epsilon_0, t_{0}]$ and $[t_{0}, t_{0}+ \epsilon_0]$. By Proposition \ref{z1t1t2equivonaxes}, it implies that both $Z_1(t)$ and $Z_2(t)$ stay on the axes for all $t \in [0,1]$. Contradiction to the assumption!

Hence, $Z_2(t_0)$ is on the axes. If it is a crossing with the axes, by the smoothness of $Z_2$ and $\widetilde{Z}_2$, it's non-transversal, which implies that $Z_2(t_0)$ is tangent to the axes. If it only touches the axes, it is clear that it is tangent to it. It follows that both $Z_1(t_0)$ and $Z_2(t_0)$ are tangent to the axes. 

However, Proposition \ref{z1z2sametimetangent} implies that $Z_1(t)$ and $Z_2(t)$ must be on the axes for all $t \in [0,1]$. Contradiction to the assumption! Therefore, $Z_i \, (i=1,2)$ can not cross the axes for any $t \in (0,1)$. The proof is complete.
\end{proof}

\begin{theorem}\label{geometrictheorem}
We assume that the path $(Z_1, \, Z_2) \in H_0^1([0,1],\, \mathbb{R}^4)$ minimizes the action $\mathcal{A}$ in \eqref{minPact}. Let $Z_1(0)=0$ and $Z_2(0) \neq 0$ be on the positive $x$-axis, while $Z_1(1) \in \overline{\mathsf{Q}_1}$ and $Z_2(1)\in \overline{\mathsf{Q}_4}$. 

If $Z_1(1) \in \mathsf{Q}_1$ or $Z_2(1)\in \mathsf{Q}_4$, then the path $(Z_1,\, Z_2)$ must satisfy: $Z_1(t)\in \overline{\mathsf{Q}_1}$ and $Z_2(t)\in \overline{\mathsf{Q}_4}$ for all $ t\in[0,1]$. That is, $Z_i(t)= \widetilde{Z}_i(t)$ for all $t \in [0,1]$.

If both $Z_1(1)$ and $Z_2(1)$ are on the positive $x$-axis, then either $Z_1(t) \in \overline{\mathsf{Q}_1}, \, \, Z_2(t)\in \overline{\mathsf{Q}_4}$ for all $t \in [0,1]$, or $Z_1(t) \in \overline{\mathsf{Q}_4}, \, \, Z_2(t)\in \overline{\mathsf{Q}_1}$ for all $t \in [0,1]$.
\end{theorem}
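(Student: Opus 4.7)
The plan is to exploit the minimizing property of $(Z_1,Z_2)$ together with the equality clause of Lemma \ref{comaprevalueofU}, and then propagate the resulting quadrant restriction using the rigidity results already established. The reflected path $(\widetilde{Z}_1,\widetilde{Z}_2)$ from \eqref{tildeZ1Z2def} is admissible: its endpoints lie in $Q_{S_4}$ and $Q_{E_1}$ by the boundary hypotheses, and it has the same kinetic energy as $(Z_1,Z_2)$. Minimality then gives $\mathcal{A}(Z_1,Z_2)\le\mathcal{A}(\widetilde{Z}_1,\widetilde{Z}_2)$, which together with the reverse inequality from Lemma \ref{comaprevalueofU} and \eqref{z1z2tildez1z2action} forces equality. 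By the equality clause of Lemma \ref{comaprevalueofU}, $Z_1(t)$ and $Z_2(t)$ must lie in two adjacent closed quadrants for every $t\in(0,1]$.

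For the first assertion, suppose $Z_1(1)\in\mathsf{Q}_1$ (the case $Z_2(1)\in\mathsf{Q}_4$ is analogous). If both $Z_1$ and $Z_2$ remained on the coordinate axes throughout $[0,1]$, then $Z_1(1)$ would lie on an axis, contradicting $Z_1(1)\in\mathsf{Q}_1$. Thus Lemma \ref{nocrossinginbetween01} applies and neither $Z_i$ crosses an axis for $t\in(0,1)$. By continuity, $Z_1$ is strictly inside $\mathsf{Q}_1$ on some left-neighborhood of $t=1$; adjacency together with $Z_2(1)\in\overline{\mathsf{Q}_4}$ then places $Z_2$ in $\overline{\mathsf{Q}_4}$ on that same neighborhood. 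Propagating these quadrant assignments back along $(0,1]$ via the no-crossing property, and extending continuously to $t=0$, yields $Z_1(t)\in\overline{\mathsf{Q}_1}$ and $Z_2(t)\in\overline{\mathsf{Q}_4}$ for all $t\in[0,1]$.

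For the second assertion, both $Z_1(1)$ and $Z_2(1)$ lie on the positive $x$-axis. If the motion is not entirely on the axes, Lemma \ref{nocrossinginbetween01} again forbids axis crossings on $(0,1)$. Since $Z_1(t),Z_2(t)\neq 0$ on $(0,1]$ and both limits at $t=1$ have strictly positive $x$-coordinate, continuity places $Z_1$ and $Z_2$ in $\overline{\mathsf{Q}_1}\cup\overline{\mathsf{Q}_4}$ near $t=1$; adjacency then restricts the pair to one of $(\overline{\mathsf{Q}_1},\overline{\mathsf{Q}_4})$ or $(\overline{\mathsf{Q}_4},\overline{\mathsf{Q}_1})$, and no-crossing propagates the chosen alternative to all of $(0,1]$. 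If instead both paths remain on the axes, then each $Z_i$, being continuous and nonzero on $(0,1]$, stays on a single axis; the endpoint condition at $t=1$ forces that axis to be the $x$-axis for both, and this case is covered trivially by either alternative in the conclusion.

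The main obstacle I anticipate is the careful handling of tangential contacts with the axes: a point where one coordinate's trajectory touches an axis without crossing it could in principle permit a switch between the two quadrant assignments. This is exactly where Propositions \ref{z1z2sametimetangent} and \ref{z1t1t2equivonaxes} enter in the no-crossing argument. The former rules out simultaneous tangencies of $Z_1$ and $Z_2$ without forcing the whole motion onto the axes, and the latter rules out a sojourn of either $Z_i$ on an axis without a permanent collapse onto the axes. Combined with the smoothness of both $(Z_1,Z_2)$ and $(\widetilde{Z}_1,\widetilde{Z}_2)$, which makes any axis contact non-transversal, these rigidity statements ensure that the quadrant assignment is constant on $(0,1]$ in each case, whence the desired inclusions follow by continuous extension to $[0,1]$.
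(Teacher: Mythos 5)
Your proposal is correct and follows essentially the same route as the paper: establish $\mathcal{A}(Z_1,Z_2)=\mathcal{A}(\widetilde{Z}_1,\widetilde{Z}_2)$ from admissibility of the reflected path plus Lemma \ref{comaprevalueofU}, invoke the equality clause to get adjacency of quadrants on $(0,1]$, and then use Lemma \ref{nocrossinginbetween01} (with the all-on-axes case handled separately) to freeze the quadrant assignment. The only cosmetic difference is that you anchor the adjacency argument at $t=1$ while the paper anchors $Z_2$ at $t=0$; both are valid.
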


\begin{proof}
By assumption, $Z_1(1) \in \overline{\mathsf{Q}_1}$ and $Z_2(1)\in \overline{\mathsf{Q}_4}$. It implies that $\mathcal{A}(Z_1,\,  Z_2) \leq \mathcal{A}( \widetilde{Z}_1, \,  \widetilde{Z}_2)$. By Lemma \ref{comaprevalueofU}, $\mathcal{A}(Z_1,\,  Z_2) \geq \mathcal{A}( \widetilde{Z}_1, \,  \widetilde{Z}_2)$. Hence, $\mathcal{A}(Z_1,\,  Z_2) = \mathcal{A}( \widetilde{Z}_1, \,  \widetilde{Z}_2)$.

If $Z_1(1) \in \mathsf{Q}_1$ or $Z_2(1)\in \mathsf{Q}_4$, By Lemma \ref{nocrossinginbetween01}, it follows that $Z_i \, (i=1,2)$ can not cross the axes in $(0,1)$. If $Z_1(1) \in \mathsf{Q}_1$, it implies that $Z_1(t) \in \mathsf{Q}_1$ for all $t \in [0,1]$. Note that $Z_1(t)$ and $Z_2(t)$ must be in two adjacent closed quadrants for all $t \in (0,1]$. Since $Z_2(0)$ is on the positive $x$-axis, it implies that $Z_2(t) \in \mathsf{Q}_4$ for all $t \in [0,1]$. Similarly, if $Z_2(1) \in \mathsf{Q}_4$, we still have $Z_1(t)\in \overline{\mathsf{Q}_1}$ and $Z_2(t)\in \overline{\mathsf{Q}_4}$ for all $ t\in[0,1]$. 

Next we proof the case when both $Z_1(1)$ and $Z_2(1)$ are on the positive $x$-axis.  If the path $(Z_1, \, Z_2)$ does not stay on the axes for all $t \in [0,1]$, by Lemma \ref{nocrossinginbetween01}, it follows that $Z_i \, (i=1,2)$ can not cross the axes in $(0,1)$. Then it can be either  $Z_1(t) \in \overline{\mathsf{Q}_1}, \, \, Z_2(t)\in \overline{\mathsf{Q}_4}$ for all $t \in [0,1]$, or $Z_1(t) \in \overline{\mathsf{Q}_4}, \, \, Z_2(t)\in \overline{\mathsf{Q}_1}$ for all $t \in [0,1]$. 

If the path $(Z_1, \, Z_2)=(Z_1(t), \, Z_2(t))$ is on the axes for all $t \in [0,1]$, they can only be on the $x$-axis since both $Z_1(1)= (b_2, \, -3 b_1)$ and $Z_2(1)=(\frac{3}{2}b_2,  \, \frac{3}{2}b_1)$ are away from the $y$-axis. By the argument in Lemma \ref{nocrossinginbetween01}, $Z_i(t) \not=0$ for all $t \in (0,1)$. It implies that $Z_i \in \overline{\mathsf{Q}_1}   \cap \overline{\mathsf{Q}_4} \, (i=1,2)$. Then it is clear that either $Z_1(t) \in \overline{\mathsf{Q}_1}, \, \, Z_2(t)\in \overline{\mathsf{Q}_4}$ for all $t \in [0,1]$, or $Z_1(t) \in \overline{\mathsf{Q}_4}, \, \, Z_2(t)\in \overline{\mathsf{Q}_1}$ for all $t \in [0,1]$ holds.  

The proof is complete.
\end{proof}

\begin{theorem}\label{mincolschubart}
If the minimizer $\mathcal{P}$ has a binary collision $q_1(0)=q_2(0)$, it must be a part of the Schubart orbit.
\end{theorem}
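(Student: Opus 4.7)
My plan is to show that the minimizer $\mathcal P$ must remain on the $x$-axis throughout $[0,1]$, at which point Venturelli's variational characterization \cite{Ven1} of the Schubart orbit identifies it as one piece of that orbit. The first step is to translate the collision data into Jacobi coordinates $Z_1=q_1-q_2$ and $Z_2=q_3-\tfrac12(q_1+q_2)$: by Corollary \ref{180degreeejection} the collision direction is $\vec{c}_{12}=(1,0)$, and by Lemma \ref{asympvec}, $\lim_{t\to 0^{+}}(\dot q_{1y}-\dot q_{2y})=0$, so $Z_1(0)=0$ emanates tangent to the positive $x$-axis and $Z_2(0)=(3a_2,0)$ lies on the positive $x$-axis. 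The problem is invariant under the reflection $\sigma:y\mapsto -y$, which sends $b_1\mapsto -b_1$ while fixing $Q_{s_4}$; so without loss of generality I take $b_1\leq 0$ at the minimum, placing $Z_1(1)\in\overline{\mathsf{Q}_1}$ and $Z_2(1)\in\overline{\mathsf{Q}_4}$.

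Next, I would feed $(Z_1,Z_2)$ into the reflection construction $(\widetilde Z_1,\widetilde Z_2)$ of \eqref{tildeZ1Z2def}. The reflected path is admissible --- its $t=1$ value corresponds to the parameter choice $(-|b_1|,|b_2|)\in\mathcal{S}$, still in $Q_{E_1}$ --- has the same kinetic energy as $(Z_1,Z_2)$, and by Lemma \ref{comaprevalueofU} has pointwise smaller-or-equal potential. Minimality of $\mathcal P$ forces equality of the two actions, and the equality clause of Lemma \ref{comaprevalueofU} then forces $(Z_1(t),Z_2(t))$ into two adjacent closed quadrants for every $t\in(0,1]$. Theorem \ref{geometrictheorem} pins this down to $Z_1(t)\in\overline{\mathsf{Q}_1}$ and $Z_2(t)\in\overline{\mathsf{Q}_4}$ for every $t\in[0,1]$.

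The crux of the argument, and what I expect to be the main obstacle, is to upgrade this quadrant confinement to the stronger statement that $Z_1,Z_2$ actually lie on the positive $x$-axis, equivalently $b_1=0$. The plan is to combine the sign constraints on $\dot Z_{iy}(0)$ coming from the confinement together with $Z_i(0)$ lying on the axis --- which yield, via Lemma \ref{asympvec}, momentum conservation, and the first-variation relation $\dot q_{3x}(0)=0$, the inequality $\dot q_{3y}(0)\leq 0$ --- with the first-variation conditions at $t=1$ (namely $\dot q_{1y}(1)=0$ and $\dot q_{2x}(1)=\dot q_{3x}(1)$) and the symmetric extension of $\mathcal P$ across $t=1$ via the involution $T$ obtained by composing the $y$-reflection with the swap of bodies $2$ and $3$. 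The extension $T$ is compatible with the variational data at $t=1$, and applying the same analysis at the second collision boundary $t=2$ of the extended orbit yields the reverse inequality $\dot q_{3y}(0)\geq 0$. Combined, $\dot q_3(0)=0$, and momentum conservation with Lemma \ref{asympvec} then give $\dot q_{iy}(0)=0$ for every $i$. Since the collinear set $\{q_{iy}\equiv 0,\;\dot q_{iy}\equiv 0\}$ is invariant under the Newtonian flow, $\mathcal P(t)$ remains on the $x$-axis for all $t\in[0,1]$.

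Once $\mathcal P$ is confined to the $x$-axis, its right endpoint lies in $Q_{e_2}$, so $\mathcal P$ is an action minimizer of the collinear three-body problem connecting a binary-collision configuration to an Euler configuration, and Venturelli's theorem \cite{Ven1} identifies $\mathcal P$ as one piece of the Schubart orbit.
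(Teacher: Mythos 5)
Your overall architecture matches the paper's: pass to Jacobi coordinates, use the reflection $(\widetilde Z_1,\widetilde Z_2)$ and Lemma \ref{comaprevalueofU} to confine $(Z_1,Z_2)$ to adjacent quadrants, reduce to collinearity, and invoke \cite{Ven1}. But there are two genuine gaps. First, your ``without loss of generality'' only normalizes the sign of $b_1$: the $y$-reflection fixes $b_2$, and there is no symmetry of the problem that flips $b_2$ while preserving the boundary order $q_{1x}(0)\leq q_{2x}(0)\leq q_{3x}(0)$ and the hypothesis that the collision is between bodies $1$ and $2$ (swapping bodies $2$ and $3$ destroys both). Since $Z_1(1)=(b_2,-3b_1)$ and $Z_2(1)=(\tfrac32 b_2,\tfrac32 b_1)$, the case $b_2<0$ puts $Z_1(1)\in\overline{\mathsf{Q}_2}$ and $Z_2(1)\in\overline{\mathsf{Q}_3}$ — still \emph{adjacent} quadrants, so the equality clause of Lemma \ref{comaprevalueofU} does not exclude it, and Theorem \ref{geometrictheorem} is not applicable because its hypothesis $Z_1(1)\in\overline{\mathsf{Q}_1}$, $Z_2(1)\in\overline{\mathsf{Q}_4}$ fails. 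The paper spends its ``Claim 1'' precisely on this case: since $Z_{2x}(0)>0$ and $Z_{2x}(1)<0$, the path $Z_2$ would have to cross the $y$-axis non-transversally, which by Propositions \ref{z1z2sametimetangent} and \ref{z1t1t2equivonaxes} would force the whole motion onto the $x$-axis and then force $Z_2$ through the origin — a contradiction that makes the reflected path \emph{strictly} better. You need this argument; the WLOG does not supply it.

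Second, and more seriously, the mechanism you propose for the reverse inequality $\dot q_{3y}(0)\geq 0$ fails. The extension across $t=1$ (the $x$-reflection composed with the swap of bodies $2$ and $3$ and time reversal) is an exact symmetry of the extended solution, so the first-order confinement constraint at the $t=2$ collision is literally the mirror image of the one at $t=0$: writing $Z_2'(t)=q_2-\tfrac12(q_1+q_3)$ on $[1,2]$, one computes $Z_2'(t)=(-Z_{2x}(2-t),Z_{2y}(2-t))$, and the resulting condition $\dot Z'_{2y}(2^-)\geq 0$ is equivalent to $\dot Z_{2y}(0^+)\leq 0$, i.e.\ to the \emph{same} inequality $\dot q_{3y}(0)\leq 0$ you already have. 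No first-order information at either boundary rules out $\dot q_{1y}(0)=\dot q_{2y}(0)>0$ (that scenario is fully consistent with $Z_2$ entering $\overline{\mathsf{Q}_4}$). The paper's actual argument is second-order: once $Z_2(\epsilon_1)\in\mathsf{Q}_4$ strictly, Theorem \ref{geometrictheorem} and Proposition \ref{z1t1t2equivonaxes} give some $\epsilon$ with $Z_{1y}(\epsilon)=q_{1y}(\epsilon)-q_{2y}(\epsilon)>0$, while the explicit Newtonian expression for $\ddot q_{1y}-\ddot q_{2y}$ together with the quadrant confinement shows $\ddot q_{1y}-\ddot q_{2y}\leq 0$ on $[0,\epsilon]$; since the difference and its derivative vanish at $t=0$ (Lemma \ref{asympvec}), the identity $q_{1y}(\epsilon)-q_{2y}(\epsilon)=\int_0^{\epsilon}\!\!\int_0^t(\ddot q_{1y}-\ddot q_{2y})\,ds\,dt\leq 0$ gives the contradiction. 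This convexity step is the missing idea. (A smaller point: your invariant-set argument for collinearity is applied at $t=0$, where the vector field is singular; the paper instead cites Lemma 6.2 of \cite{Yu2} to conclude collinearity from $\dot q_{1y}(0)=\dot q_{2y}(0)=0$.)
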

\begin{proof}
According to \cite{Ven1}, a quarter of the Schubart orbit is a minimizer on the $x$-axis from a binary collision between $q_1$ and $q_2$ to an Euler configuration with $q_1(1)=0$. Corollary \ref{180degreeejection} implies that if the minimizer has a binary collision $q_1(0)=q_2(0)$, then at $t=0$, $\vec{c}_{12}$ must be $(1,0)$. We then apply Theorem \ref{geometrictheorem} to prove that the path $\mathcal{P}$, i.e. the minimizer $(Z_1, \, Z_2)$, must stay on the $x$-axis.

To give the details of the proof, without loss of generality, we can assume that at $t=1$, $q_1(1)$ stays on the nonnegative part of the $y$-axis. Then in the minimizer $\mathcal{P}$, $Q_{e_1}$ can be one of the following two cases in Fig. \ref{dengyaogouxing}.

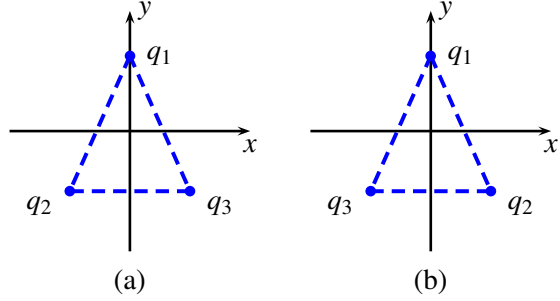
\begin{figure}[ht]
    \begin{center}
    \psset{xunit=2cm,yunit=2cm}
\begin{pspicture}(-2,-1)(2,1)
\psline[linewidth=1pt]{->}(-1.8, 0)(-0.2, 0)
\psline[linewidth=1pt]{->}(-1, -0.8)(-1, 0.8)
\psline[linewidth=1pt]{->}(0.2, 0)(1.8, 0)
\psline[linewidth=1pt]{->}(1, -0.8)(1, 0.8)
\rput(-0.2, -0.1){$x$}
\rput(1.8, -0.1){$x$}
\rput(-0.9, 0.8){$y$}
\rput(1.1, 0.8){$y$}
\psdots[dotsize=4pt, linecolor=blue](-1, 0.5)(1,0.5)(-1.4,  -0.4)(-0.6, -0.4)(1.4,  -0.4)(0.6, -0.4)
\psline[linewidth=1.5pt, linestyle=dashed,  linecolor=blue](-1, 0.5)(-1.4,  -0.4)(-0.6, -0.4)(-1, 0.5)
\psline[linewidth=1.5pt, linestyle=dashed,  linecolor=blue](1, 0.5)(1.4,  -0.4)(0.6, -0.4)(1, 0.5)
\rput(-0.8, 0.5){$q_1$}
\rput(1.2, 0.5){$q_1$}
\rput(-1.6, -0.5){$q_2$}
\rput(-0.4, -0.5){$q_3$}
\rput(1.6, -0.5){$q_2$}
\rput(0.4, -0.5){$q_3$}
\rput(-1, -1){(a)}
\rput(1, -1){(b)}
  \end{pspicture}   
   \end{center}
 \caption{ \label{dengyaogouxing}The isosceles configuration $Q_{e_1}$ of the minimizer $\mathcal{P}$ at $t=1$.}
\end{figure}

\textbf{Claim 1:} At $t=1$, $Q_{e_1}$ in $\mathcal{P}$ must be like Fig. \ref{dengyaogouxing} (a). 

In fact, by the definition of the boundary set $Q_{S_4}$ at $t=0$, we have 
\begin{equation}\label{initialposition}
Z_1(0)=(0,0),\ \ \ \ \ Z_2(0)=(3a_2, \, 0),
\end{equation}
where $a_2>0$. 

If the configuration at $t=1$ is like Fig. \ref{dengyaogouxing} (b), then $Z_1(1) \in \overline{\mathsf{Q}_2}$ and $Z_2(1)\in \overline{\mathsf{Q}_3}$. Let $\widetilde{Z}_1=(|Z_{1x}|, \, |Z_{1y}|)$ and $\widetilde{Z}_2= (|Z_{2x}|,\,  -|Z_{2y}|)$. It is clear that $\widetilde{Z}_1(t) \in \overline{\mathsf{Q}_1}$ and $\widetilde{Z}_2(t) \in \overline{\mathsf{Q}_4}$ for all $t \in [0,1]$, and the two paths $(Z_1,\,  Z_2)$ and $(\widetilde{Z}_1, \, \widetilde{Z}_2)$ must be different. 

 Note that $(\widetilde{Z}_1, \, \widetilde{Z}_2)$ is a path in $\{ q \in H^1([0,1], \chi) \, | \, q(0) \in Q_{S_4}, \, q(1) \in Q_{E_1} \}$. By the same argument as in Lemma \ref{comaprevalueofU}, we have $\mathcal{A}(Z_1, \, Z_2) \geq \mathcal{A}(\widetilde{Z}_1, \, \widetilde{Z}_2)$. Next, we show that 
 \[\mathcal{A}(Z_1, \, Z_2)>\mathcal{A}(\widetilde{Z}_1, \, \widetilde{Z}_2).\] 
If not, it implies that $\mathcal{A}(Z_1,\, Z_2)=\mathcal{A}(\widetilde{Z}_1, \, \widetilde{Z}_2)$. It follows that both $(Z_1,\, Z_2)$ and $(\widetilde{Z}_1,\, \widetilde{Z}_2)$ are analytic in $(0,1]$. Note that $Z_2(0) \not=0$ is on the positive $x$-axis, while $Z_2(1) \in \overline{\mathsf{Q}_3}$ and $Z_2(1) \not=0$. It implies that there must exists some $t_0 \in (0,1)$, such that $Z_2$ crosses the $y$-axis at $t=t_0$ non-transversally. A similar argument as in Lemma \ref{nocrossinginbetween01} implies that both $Z_1(t_0)$ and $Z_2(t_0)$ must be tangent to the axes. By Proposition \ref{z1z2sametimetangent}, it follows that the path $(Z_1, \, Z_2)=(Z_1(t), \, Z_2(t))$ must stay on the axes for all $t \in [0,1]$. Since $Z_i(1) \, (i=1,2)$ can not be on the $y$-axis, it follows that the path $(Z_1, Z_2)$ is always on the $x$-axis and $Z_2(t)$ must cross the origin in $(0,1)$. However, by the proof in Lemma \ref{nocrossinginbetween01}, $Z_2(t)$ can not cross the origin for $t \in (0,1)$. Contradiction! Hence,  
\begin{equation*}
  \mathcal{A}(Z_1,Z_2)> \mathcal{A}(\widetilde{Z}_1,\, \widetilde{Z}_2). 
\end{equation*}
Thus, the configuration $Q_{e_1}$ of the minimizer $\mathcal{P}$ at $t=1$ must be like Fig. \ref{dengyaogouxing} (a). Claim 1 is proved.

Therefore,  $Z_1(1) \in \overline{\mathsf{Q}_1}$ and $Z_2(1)\in \overline{\mathsf{Q}_4}$. Note that $Z_1(1) \not= 0$ and $Z_2(1) \not= 0$ can not be on the $y$-axis. By Theorem \ref{geometrictheorem}, it implies that $Z_1(t)\in \overline{\mathsf{Q}_1}$ and $Z_2(t)\in \overline{\mathsf{Q}_4}$ for all $ t\in[0,1]$ if either $Z_1(1)$ or $Z_2(1)$ is away from the $x$-axis. And if both $Z_1(1)$ and $Z_2(1)$ are on the positive $x$-axis, then either $Z_1(t)$ stays in the first quadrant and $Z_2(t)$ stays in the fourth quadrant for all $t \in [0,1]$, or $Z_1(t)$ stays in the fourth quadrant and $Z_2(t)$ stays in the first quadrant for all $t \in [0,1]$.

On the other hand, note that $\vec{c}_{12}=(1,0)$. At $t=0$, by \cite{Ven1}, the velocities of body 3 satisfies $\dot{q}_{3x}(0)=0$. By Lemma \ref{asympvec}, the velocities in the $y$-axis satisfy
\begin{equation}\label{yvelocity}
  \dot{q}_{1y}(0)=\dot{q}_{2y}(0)= -\frac{1}{2}\dot{q}_{3y}(0).
\end{equation}

If $\dot{q}_{1y}(0)=\dot{q}_{2y}(0)=0$, by Lemma 6.2 in \cite{Yu2}, the motion must be collinear. In this case, the minimizer $\mathcal{P}$ coincides with the Schubart orbit. 

Note that $q_{1y}(0)= q_{2y}(0)=0$ and $\dot{q}_{1y}(0)=\dot{q}_{2y}(0)$. When $\epsilon>0$ small enough, we consider the following identity:
\begin{equation}\label{doubledotqident}
q_{1y}(\epsilon)- q_{2y}(\epsilon)= \int_0^{\epsilon} \int_0^t \left[\ddot{q}_{1y}- \ddot{q}_{2y} \right] \, ds \,dt.
\end{equation}
If $\dot{q}_{1y}(0)=\dot{q}_{2y}(0)>0$, then for small enough $\epsilon_1>0$, $Z_2(\epsilon_1) \in \mathsf{Q}_4$. By Theorem \ref{geometrictheorem},  it follows that $Z_2(t) \in \overline{\mathsf{Q}_4}$ and $Z_1(t) \in \overline{\mathsf{Q}_1}$. By Proposition \ref{z1t1t2equivonaxes}, we can choose small enough $\epsilon>0$, such that $Z_1(\epsilon) \in \mathsf{Q}_1$. That is $Z_{1y}(\epsilon)= q_{1y}(\epsilon)- q_{2y}(\epsilon)>0$.   

However, the Newtonian equations imply that    
\[\ddot{q}_{1y}- \ddot{q}_{2y} = \frac{2(q_{2y}-q_{1y})}{|q_1-q_2|^3} + \frac{q_{3y}-q_{1y}}{|q_3-q_1|^3}-\frac{q_{3y}-q_{2y}}{|q_2-q_3|^3}.  \]
Since $Z_1(t) \in \overline{\mathsf{Q}_1}$ and $Z_2(t) \in \overline{\mathsf{Q}_4}$ for all $t \in [0, \epsilon]$ with $\epsilon>0$ small enough, it follows that for all $t \in [0, \epsilon]$, 
\[\frac{2(q_{2y}(t)-q_{1y}(t))}{|q_1(t)-q_2(t)|^3}\leq 0, \quad |q_3(t)-q_1(t)|\leq |q_3(t)-q_2(t)|, \quad  q_{3y}(t)-q_{1y}(t) \leq q_{3y}(t)-q_{2y}(t)\leq 0.\]
Hence, 
\[\frac{2(q_{1y}(t)-q_{2y}(t))}{|q_1(t)-q_2(t)|^3}\leq 0, \qquad \frac{q_{3y}(t)-q_{1y}(t)}{|q_3(t)-q_1(t)|^3}-\frac{q_{3y}(t)-q_{2y}(t)}{|q_2(t)-q_3(t)|^3}\leq 0, \quad \forall \, t \in [0, \epsilon]. \]
It implies that 
\[q_{1y}(\epsilon)- q_{2y}(\epsilon)= \int_0^{\epsilon} \int_0^t \left[\ddot{q}_{1y}- \ddot{q}_{2y} \right] \, ds \,dt \leq 0. \]
Contradict to $q_{1y}(\epsilon)- q_{2y}(\epsilon)>0$!

Similarly, $\dot{q}_{1y}(0)=\dot{q}_{2y}(0)<0$ can NOT hold. 

Therefore, the minimizer $\mathcal{P}$ satisfies $\dot{q}_{1y}(0)=\dot{q}_{2y}(0)=0$, and it must coincide with the Schubart orbit. The proof is complete.
\end{proof}

\section{First variation and extension of the minimizing path}
This section shows that  if the minimizer $\mathcal{P}$ in the minimizing problem \eqref{minimizerhenon} has no collision, it can be extended to the Broucke-H\'{e}non orbit in Fig.~\ref{Henonorbit}. Recall that
\begin{equation*}
 Q_{s_1} = \begin{bmatrix}
-2a_1-a_2   &  0   \\
a_1-a_2  &  0     \\
a_1+2a_2   & 0   
\end{bmatrix}, \qquad \,\,
  Q_{e_1} =  \begin{bmatrix}
0   &  -2b_1    \\
-b_2 &  b_1 \\
b_2   &   b_1
\end{bmatrix},  
\end{equation*}
where $q=(q_1^T, q_2^T, q_3^T)^T$ and $q_i  =(q_{ix}, q_{iy})\ (i=1,2,3)$ are row vectors in $\mathbb{R}^2$. The set $\mathcal{S}$ is 
\begin{eqnarray*}
\mathcal{S}= \bigg\{ (a_1, \, a_2,  \,  b_1, \, b_2) \, \big| \,  a_1 \geq 0, \, \, a_2 \geq 0, \, b_1\in \mathbb{R}, \,  b_2 \in \mathbb{R} \bigg\}. 
\end{eqnarray*}
According to Theorem \ref{Thm1.1}, in the minimizing path $\mathcal{P}$, the parameters $a_{10}, \, a_{20}, \, b_{10}, \, b_{20} \in \mathcal{S}$ are all finite. The boundary of $\mathcal{S}$: $a_{1}=0$ or $a_{2}=0$ corresponds to collisions. The assumption that path $\mathcal{P}$ is collision-free implies that  $a_{10}, a_{20}$ are away from the boundary of $\mathcal{S}$. Hence,  the first variation formula can be applied to the minimizing path $\mathcal{P}$.

\begin{lemma}\label{velocitiesofallbodies}
Let $q^{*}(t) \, (t\in [0,1])$ be the positions of the action minimizer $\mathcal{P}$. Let $q^{*}_i(t)$ be the position of the $i$-th body and $\dot{q^{*}_i}(t)= \left(\dot{q}^{*}_{ix}(t), \dot{q}^{*}_{iy}(t) \right) $ be its corresponding velocity.  Then
\begin{equation}\label{velocityat0}
\dot{q}^{*}_{1x}(0)=\dot{q}^{*}_{2x}(0)=\dot{q}^{*}_{3x}(0)=0,
\end{equation}
\begin{equation}\label{velocityat10}
\dot{q}^{*}_{1y}(1) =0, \qquad \dot{q}^{*}_{2y}(1)=- \dot{q}^{*}_{3y}(1),  
\end{equation}
\begin{equation}\label{velocityat11}
\dot{q}^{*}_{2x}(1) =  \dot{q}^{*}_{3x}(1).
\end{equation}
\end{lemma}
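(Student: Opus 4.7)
The plan is to combine Newton's equations, which the collision-free minimizer $\mathcal{P}$ satisfies on $(0,1)$, with the first variation formula evaluated with respect to the four boundary parameters $a_1, a_2, b_1, b_2$. By the hypothesis that $\mathcal{P}$ is collision-free, the starting parameters satisfy $a_{10}>0, a_{20}>0$ (otherwise bodies $1,2$ or $2,3$ would collide at $t=0$), and $b_{10}, b_{20} \in \mathbb{R}$ is unconstrained, so $(a_{10}, a_{20}, b_{10}, b_{20})$ lies in the interior of $\mathcal{S}$. Consequently arbitrary signed variations in each of the four parameters are admissible, and the four corresponding Gateaux derivatives of $\mathcal{A}$ must vanish. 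After integration by parts and cancellation against Newton's equations $m_i\ddot{q}^{*}_i = \partial U/\partial q_i$ on $(0,1)$, only the boundary term
\begin{equation*}
\sum_{i=1}^{3} m_i\,\dot{q}^{*}_i(t)\cdot \delta q_i(t)\,\Big|_{t=0}^{t=1} \;=\; 0
\end{equation*}
survives for each admissible variation.

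Varying $a_1$ (which affects only $Q_{s_1}$) produces $\delta q_1(0)=(-2,0)$, $\delta q_2(0)=(1,0)$, $\delta q_3(0)=(1,0)$, and $\delta q_i(1)=0$; varying $a_2$ produces $\delta q_1(0)=(-1,0)$, $\delta q_2(0)=(-1,0)$, $\delta q_3(0)=(2,0)$, and $\delta q_i(1)=0$. Using $m_1=m_2=m_3=1$, these two variations yield the two scalar identities
\begin{equation*}
-2\dot{q}^{*}_{1x}(0) + \dot{q}^{*}_{2x}(0) + \dot{q}^{*}_{3x}(0) = 0, \qquad -\dot{q}^{*}_{1x}(0) - \dot{q}^{*}_{2x}(0) + 2\dot{q}^{*}_{3x}(0) = 0.
\end{equation*}
Combined with the center-of-mass identity $\dot{q}^{*}_{1x}(0)+\dot{q}^{*}_{2x}(0)+\dot{q}^{*}_{3x}(0)=0$, which holds because $\mathcal{P}(t)\in\chi$, this $3\times 3$ linear system forces $\dot{q}^{*}_{1x}(0) = \dot{q}^{*}_{2x}(0) = \dot{q}^{*}_{3x}(0) = 0$, which is \eqref{velocityat0}.

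The argument at $t=1$ is symmetric. Varying $b_1$ gives $\delta q_1(1)=(0,-2)$, $\delta q_2(1)=(0,1)$, $\delta q_3(1)=(0,1)$, and varying $b_2$ gives $\delta q_1(1)=(0,0)$, $\delta q_2(1)=(-1,0)$, $\delta q_3(1)=(1,0)$, with $\delta q_i(0)=0$ in both cases. Vanishing of the two corresponding boundary terms yields
\begin{equation*}
-2\dot{q}^{*}_{1y}(1) + \dot{q}^{*}_{2y}(1) + \dot{q}^{*}_{3y}(1) = 0, \qquad -\dot{q}^{*}_{2x}(1) + \dot{q}^{*}_{3x}(1) = 0.
\end{equation*}
The second identity is exactly \eqref{velocityat11}; combining the first with the center-of-mass identity $\dot{q}^{*}_{1y}(1)+\dot{q}^{*}_{2y}(1)+\dot{q}^{*}_{3y}(1)=0$ gives $\dot{q}^{*}_{1y}(1)=0$ and $\dot{q}^{*}_{2y}(1) = -\dot{q}^{*}_{3y}(1)$, which is \eqref{velocityat10}. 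I do not anticipate any real obstacle: the only nontrivial input is that the collision-free hypothesis pushes $(a_{10}, a_{20})$ into the open positive quadrant, so the inequality constraints defining $\mathcal{S}$ are inactive at the minimizer and two-sided variations are admissible.
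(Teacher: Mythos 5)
Your proposal is correct and follows essentially the same route as the paper: interior position of $(a_{10},a_{20},b_{10},b_{20})$ in $\mathcal{S}$ due to the collision-free hypothesis, integration by parts against Newton's equations to isolate the boundary term, the same two variations at $t=0$ (coming from $a_1$ and $a_2$), and the zero-total-momentum identity to close the linear system. The only difference is that you write out the $t=1$ variations explicitly where the paper just says they are analogous; the content is identical.
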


\begin{proof}
The proof of \eqref{velocityat0}, \eqref{velocityat10} and \eqref{velocityat11} are essentially the same. Here we only show \eqref{velocityat0} in detail. Let $\xi(0)$ be an admissible variation, which means that $q^* + \delta \xi \in \{ q \in H^1([0,1], \chi) | q(0) \in Q_{S_1}, \, q(1) \in Q_{E_1} \}$ for small enough  $\delta$, where $Q_{S_1}= \{Q_{s_1} \, | \, a_1 \geq 0, a_2 \geq 0\}$ and $Q_{E_1}= \{Q_{e_1} \, | \, b_1\in \mathbb{R}, b_2 \in \mathbb{R}\}$. The first variation $\delta_{\xi} \mathcal{A}(q^*)$ satisfies
\begin{eqnarray*}
& & \delta_{\xi} \mathcal{A}(q^*)\\
&=& \lim_{\delta \to 0} \frac{\mathcal{A}(q^* + \delta \xi) -   \mathcal{A}(q^* ) }{\delta}\\
&=&\lim_{\delta \to 0} \int_{0}^1 \, \sum_{i=1}^3 \frac{ m_i |\dot{q}^* + \delta \dot{\xi}|^2- m_i |\dot{q}^*|^2 }{2\delta} + \frac{U(q^* + \delta \xi )- U(q^*)}{\delta}  \,  dt\\
&=& \int_{0}^1 \, \sum_{i=1}^3 m_i <\dot{q}^*_i, \dot{\xi}_i> + \sum_{i=1}^3 <\frac{\partial U}{ \partial q^*_i} , \xi_i> \, dt \\
&=& \sum_{i=1}^3 m_i < \dot{q}^*_i,  \xi_i > \bigg|_0^1 + \int_{0}^1 \,  \sum_{i=1}^3 < -m_i  \ddot{q}^{*}_i +\frac{\partial U}{\partial q^*_i},  \xi_i> \, dt\\
&=&\sum_{i=1}^3 m_i< \dot{q}^*_i,  \xi_i > \bigg|_0^1 .
\end{eqnarray*}
Because the first variation vanishes for any $\xi$ and the minimizing path $q^*$,  $\sum_{i=1}^3 m_i< \dot{q}^*_i,  \xi_i > \bigg|_0^1=0$. In particular, $\xi(0)$ can be $ \begin{pmatrix}
-2   &  0 \\
1   &  0\\
1  &  0
\end{pmatrix}$ and $\xi(1)$ can be $0$. It follows that
\begin{equation}\label{velocityalltheta}
2\dot{q}^{*}_{1x}(0)-\dot{q}^{*}_{2x}(0)-\dot{q}^{*}_{3x}(0)=0.
\end{equation}
We can set $\xi(0)= \begin{pmatrix}
-1   &  0 \\
-1   &  0\\
2  &  0
\end{pmatrix}$ and $\xi(1)=0$, it follows that
\begin{equation}\label{thetanot0pi2}
-\dot{q}^{*}_{1x}(0)- \dot{q}^{*}_{2x}(0)+ 2\dot{q}^{*}_{3x}(0)=0.
\end{equation}
Note that the total linear momentum $ \dot{q}^{*}_{1x}(0)+ \dot{q}^{*}_{2x}(0)+ \dot{q}^{*}_{3x}(0) =0$. It follows that  $ \dot{q}^{*}_{1x}(0)= \dot{q}^{*}_{2x}(0)= \dot{q}^{*}_{3x}(0) =0$.

The other three identities \eqref{velocityat10} and \eqref{velocityat11} can be shown by similar arguments. The proof is complete.
\end{proof}

With the help of Lemma \ref{velocitiesofallbodies}, we can show that $q^*(t) \, (t \in [0,1])$ can be extended to a periodic solution.
\begin{lemma}\label{henonorbit1}
$q^*(t) \, (t \in [0,1])$ can be extended to a periodic orbit with period $T=4$.
\end{lemma}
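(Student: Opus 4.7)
The plan is to extend $q^*:[0,1]\to\chi$ to all of $\mathbb{R}$ by reflecting it across the two time instants $t=0$ and $t=1$, and then recognize the resulting curve as a $4$-periodic classical solution of \eqref{Newton} via uniqueness for the initial value problem. Since $q^*$ is collision-free (by Theorem \ref{nobinarycollision}, Lemma \ref{lowerbdd23collide} and the hypothesis that there is no collision), $q^*$ is already a smooth classical solution of \eqref{Newton} on $[0,1]$, so the uniqueness theorem applies at both endpoints.

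At $t=0$, the configuration $Q_{s_1}$ lies on the $x$-axis and \eqref{velocityat0} says every velocity is vertical. I would let $\Sigma_0$ denote the linear involution on $\chi$ sending each body's coordinate $(x,y)$ to $(x,-y)$, with no permutation of bodies. Then $\Sigma_0 q^*(0)=q^*(0)$ and $\Sigma_0\dot q^*(0)=-\dot q^*(0)$. Because \eqref{Newton} is reversible in time and equivariant under $\Sigma_0$ (the potential $U$ depends only on mutual distances), the curve $t\mapsto\Sigma_0 q^*(-t)$ is also a classical solution sharing the initial data of $q^*$ at $t=0$. Uniqueness then forces $q^*(-t)=\Sigma_0 q^*(t)$, extending $q^*$ smoothly to $[-1,1]$.

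At $t=1$ the configuration $Q_{e_1}$ is an isosceles triangle symmetric about the $y$-axis with $q^*_1(1)$ on that axis. I would let $\Sigma_1$ be the involution that both reflects $(x,y)\mapsto(-x,y)$ on every body and simultaneously swaps bodies $2$ and $3$. The explicit form of $Q_{e_1}$ yields $\Sigma_1 q^*(1)=q^*(1)$, and the three identities in \eqref{velocityat10}--\eqref{velocityat11} are precisely what is needed to verify $\Sigma_1\dot q^*(1)=-\dot q^*(1)$. The same reversibility-plus-uniqueness argument then gives $q^*(2-t)=\Sigma_1 q^*(t)$, smoothly extending $q^*$ to $[-1,3]$.

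Finally, $\Sigma_0$ and $\Sigma_1$ commute because both compositions act as $(x,y)\mapsto(-x,-y)$ on every body combined with the transposition of bodies $2$ and $3$. Chaining the two reflection laws yields $q^*(t+2)=\Sigma_1\Sigma_0\, q^*(t)$ and hence $q^*(t+4)=(\Sigma_1\Sigma_0)^2 q^*(t)=q^*(t)$, so the extension is a $4$-periodic classical solution of \eqref{Newton}. The only delicate point I anticipate is the velocity identity $\Sigma_1\dot q^*(1)=-\dot q^*(1)$, in which the body swap inside $\Sigma_1$ must be lined up carefully against the three scalar equalities of Lemma \ref{velocitiesofallbodies} at $t=1$; once that bookkeeping is checked, the remainder of the argument is an immediate application of uniqueness and the commutation $\Sigma_0\Sigma_1=\Sigma_1\Sigma_0$.
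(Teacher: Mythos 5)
Your proposal is correct and follows essentially the same route as the paper: it uses the boundary velocity identities of Lemma \ref{velocitiesofallbodies} to exhibit two reflection symmetries of the data (one fixing the collinear configuration at $t=0$, one fixing the isosceles configuration at $t=1$, the latter combining $(x,y)\mapsto(-x,y)$ with the transposition of bodies $2$ and $3$), and then invokes reversibility, equivariance and uniqueness of the initial value problem to extend the minimizer and conclude period $4$. The only cosmetic difference is that you anchor the two reflections at $t=0$ and $t=1$ and obtain periodicity by composing them, whereas the paper reflects at $t=1$ and then at $t=2$ and checks the period explicitly; the velocity bookkeeping at $t=1$ that you flag as the delicate point is exactly the computation the paper carries out, and it goes through.
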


\begin{proof}
At $t=0$, three bodies line up on the $x$-axis in an order $q_1(0) < q_2(0) <q_3(0)$. At $t=1$, they form an isosceles triangle with symmetry axis on the $y$-axis. By Lemma \ref{velocitiesofallbodies}, $\dot{q}^{*}_{1x}(0)= \dot{q}^{*}_{2x}(0)= \dot{q}^{*}_{3x}(0) =0$ and $\dot{q}^{*}_{1y}(1)=0, \, \dot{q}^{*}_{2x}(1)= \dot{q}^{*}_{3x}(1)$. Let $q^*(t) = \begin{bmatrix}
q^*_1(t) \\
  q^*_2(t)\\
   q^*_3(t)
   \end{bmatrix}= \begin{bmatrix}
q^*_{1x}(t)  &  q^*_{1y}(t)  \\
q^*_{2x}(t)  &  q^*_{2y}(t)\\
 q^*_{3x}(t)  &  q^*_{3y}(t)
   \end{bmatrix}$, where $t \in [0, 1]$. When $t \in (1,2]$, we define 
    \begin{equation}\label{qsol1to2}
    q^*(t) = \begin{bmatrix}
q^*_1(t) \\
  q^*_2(t)\\
   q^*_3(t)
   \end{bmatrix}=    \begin{bmatrix}
 -q^*_{1x}(2-t)  &  q^*_{1y}(2-t)  \\
-q^*_{3x}(2-t)  &  q^*_{3y}(2-t)\\
 -q^*_{2x}(2-t)  &  q^*_{2y}(2-t)
   \end{bmatrix}, \qquad t\in (1,2]. 
   \end{equation}
   When $t$ approaches 1, it is easy to check that  $\lim_{t\rightarrow 1^-}q^*(t)=\lim_{t\rightarrow 1^+}q^*(t)= q^*(1)$.  On the other hand, by applying Lemma  \ref{velocitiesofallbodies}, it follows that
  \[ \dot{q}^*(1)=\begin{bmatrix}
 \dot q^*_{1x}(1)  &  -\dot q^*_{1y}(1)  \\
\dot q^*_{3x}(1)  &  -\dot q^*_{3y}(1)\\
 \dot q^*_{2x}(1)  &  -\dot q^*_{2y}(1)
   \end{bmatrix}= \begin{bmatrix}
 \dot q^*_{1x}(1)  &  0\\
\dot q^*_{2x}(1)  &  \dot q^*_{2y}(1)\\
 \dot q^*_{3x}(1)  &  \dot q^*_{3y}(1)
   \end{bmatrix}. \]
Hence, at $t=1$, $q^*(t)  \, (t \in (1,2])$  and $q^*(t)  \, (t \in [0,1])$ are smoothly connected. By the uniqueness of solution of initial value problem in an ODE system, $q^*(t)$ can be extended to $[0,2]$ by \eqref{qsol1to2}. At $t=2$, 
\[q^*_1(2)= \left(2a_{10}+a_{20}, \, 0 \right), \qquad q^*_2(2)= \left(-a_{10}-2a_{20}, \, 0 \right),  \qquad q^*_3(2)= \left(-a_{10}+a_{20}, \, 0 \right). \]
It follows that, at $t=2$ the three bodies line up on the $x$-axis again in an order $q_2(2) < q_3(2) <q_1(2)$. By Lemma \ref{velocitiesofallbodies}, the velocities of the three bodies at $t=2$ are all vertical. Therefore, we can extend the path to $t \in (2, 4]$ as follows
 \begin{equation}\label{qsol2to4}  q^*(t) = \begin{bmatrix}
q^*_1(t) \\
  q^*_2(t)\\
   q^*_3(t)
   \end{bmatrix} =    \begin{bmatrix}
 q^*_{1x}(4-t)  &  -q^*_{1y}(4-t)  \\
q^*_{2x}(4-t)  &  -q^*_{2y}(4-t)\\
 q^*_{3x}(4-t)  &  -q^*_{3y}(4-t)
   \end{bmatrix}, \qquad t\in(2,4]. \end{equation}
It follows that at $t=4$, $ q^*_{i}(4)= q^*_i(0)\, (i=1,2,3)$ and the velocities satisfy  $\dot{q}^*_{i}(4)= \dot{q}^*_i(0)\, (i=1,2,3)$. Hence, $q^*(t)$ can be extended to a periodic solution by \eqref{qsol1to2} 
and \eqref{qsol2to4}, which has a period $4$. The proof is complete.
\end{proof}
%Similar argument can show that $\widetilde{\mathcal{P}_2}$ can be extended to a periodic solution with period $4$. The pictures of these two periodic solutions are shown in Figure \ref{twoHenonorbit} in the Introduction.

At the end of this section, we show that if the minimizer $\mathcal{P}$ has no collision, then the periodic orbit generated by $\mathcal{P}$ has a $D_2$ symmetry. 
\begin{lemma}
For any $t\in\mathbb{R}$, 
\begin{equation}
q^*_i(t) = R_xq^*_i(-t),   \qquad (i = 1, 2, 3), 
\end{equation}
\begin{equation}
q^*_1(t+2) = R_xR_yq^*_1(t),   \quad q^*_2(t+2)=R_xR_yq^*_3(t),  \quad q^*_3(t+2)=R_xR_yq^*_2(t),
\end{equation}
where $R_x$ and $R_y$ is defined as follows
\[R_x= \begin{bmatrix}
1   &  0   \\
0  &  -1   \\ 
\end{bmatrix}, \hspace{1cm} R_y= \begin{bmatrix}
-1   &  0   \\
0  &  1   \\ 
\end{bmatrix}.\]
\end{lemma}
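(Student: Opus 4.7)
The plan is to prove the first identity by a symmetry–plus–uniqueness argument anchored at $t=0$, and the second identity by direct substitution into the extension formulas from Lemma \ref{henonorbit1}, with both statements then propagated to $t \in \mathbb{R}$ by the $4$-periodicity established there.

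For the first identity, I introduce $\tilde q_i(t) := R_x q^*_i(-t)$. Because $R_x$ is an orthogonal transformation of $\mathbb{R}^2$, the Newton system \eqref{Newton} is invariant under simultaneously applying $R_x$ to each body and reversing time; hence $\tilde q$ is again a smooth solution of \eqref{Newton} on the collision-free extended orbit. To match initial data at $t=0$, I combine two facts: the boundary description $Q_{s_1}$ forces $q^*_{iy}(0)=0$, so $R_x q^*_i(0) = q^*_i(0)$; and Lemma \ref{velocitiesofallbodies} gives $\dot q^*_{ix}(0)=0$, so $\dot{\tilde q}_i(0) = -R_x \dot q^*_i(0) = \dot q^*_i(0)$. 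Uniqueness of the initial value problem for \eqref{Newton} then yields $\tilde q \equiv q^*$ wherever the extended orbit is defined, and $4$-periodicity gives the identity on all of $\mathbb{R}$.

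For the second identity, I compute directly from Lemma \ref{henonorbit1}. For $t \in [0,1]$ one has $t+2 \in [2,3]$, so the extension rule \eqref{qsol2to4} yields $q^*_i(t+2) = R_x q^*_i(2-t)$ for $i=1,2,3$. Since $2-t \in [1,2]$, the extension rule \eqref{qsol1to2} then rewrites $q^*_1(2-t) = R_y q^*_1(t)$, $q^*_2(2-t) = R_y q^*_3(t)$, $q^*_3(2-t) = R_y q^*_2(t)$. Composing the two reflections gives exactly $q^*_1(t+2) = R_xR_y q^*_1(t)$, $q^*_2(t+2) = R_xR_y q^*_3(t)$ and $q^*_3(t+2) = R_xR_y q^*_2(t)$. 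A parallel computation covers $t \in [1,2]$, in which one additionally uses $R_y^2 = I$; the identities then extend to all $t \in \mathbb{R}$ by the period-$4$ property.

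I do not expect a genuine obstacle in these arguments, because the technical content has already been absorbed into Lemma \ref{henonorbit1}: the extension formulas there were built so that both position and velocity match smoothly across $t = 1, 2, 3$, and this is precisely what legitimizes both invoking uniqueness (for the first identity) and composing reflections across two intervals (for the second). The only bookkeeping point warranting attention is to track how the body-permutation arises — the transposition $(2\,3)$ is introduced by \eqref{qsol1to2} while \eqref{qsol2to4} acts diagonally, so the composition in the second identity correctly produces the permutation $(1)(2\,3)$ prescribed in the statement.
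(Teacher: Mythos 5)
Your proposal is correct. For the second identity you do exactly what the paper does: substitute into the extension formulas \eqref{qsol2to4} and \eqref{qsol1to2} from Lemma \ref{henonorbit1}, compose the two reflections, and track the transposition of bodies 2 and 3 introduced by \eqref{qsol1to2}; your bookkeeping of the permutation and the use of $4$-periodicity to cover all of $\mathbb{R}$ match the paper's computation. For the first identity the paper simply reads the relation $q^*_i(t)=R_xq^*_i(4-t)=R_xq^*_i(-t)$ directly off the extension formula \eqref{qsol2to4}, whereas you re-derive it from scratch via the symmetry-plus-uniqueness argument: $\tilde q_i(t):=R_xq^*_i(-t)$ solves \eqref{Newton}, and the boundary data ($q^*_{iy}(0)=0$ from $Q_{s_1}$, $\dot q^*_{ix}(0)=0$ from Lemma \ref{velocitiesofallbodies}) force $\tilde q_i(0)=q^*_i(0)$ and $\dot{\tilde q}_i(0)=\dot q^*_i(0)$. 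This is a legitimate alternative, and in fact it is the same mechanism that justified the extension \eqref{qsol2to4} in the first place, so your route is slightly more self-contained at the cost of repeating an argument the paper has already absorbed into Lemma \ref{henonorbit1}; the paper's one-line citation of the formula is shorter but relies on the reader recognizing that \eqref{qsol2to4} is literally the statement $q^*_i(t)=R_xq^*_i(4-t)$. No gaps either way.
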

\begin{proof}
Actually, by the extension formula \eqref{qsol2to4}, it is clear that 
\[q^*_i(t) = R_xq^*_i(4-t) = R_xq^*_i(-t), \quad (i = 1, 2, 3). \]
By the extension formula \eqref{qsol1to2}, we have
\[q^*_1(2-t)= R_y  q^*_1(t), \quad   q^*_2(2-t)= R_y  q^*_3(t), \quad   q^*_3(2-t)= R_y  q^*_2(t). \]
It follows that 
\[q^*_1(2+t)= R_xR_y q^*_1(t), \quad   q^*_2(2-t)= R_xR_y   q^*_3(t), \quad   q^*_3(2-t)= R_xR_y   q^*_2(t). \]
The proof is complete.
\end{proof}

\begin{remark}
According to our analysis, the minimizer $\mathcal{P}$ must coincide with either the Schubart orbit or the Broucke-H\'{e}non orbit. Let the masses be $m_1=m_2=m_3=1$. Numerically, we can calculate the action value of $\mathcal{P}$  in each case. If $\mathcal{P}$ is one part of the Schubart orbit, its action value $\mathcal{A}_{10}$ is
\[\mathcal{A}_{10}  \approx 3.43.  \]
The positions and velocities of the Schubart orbit with minimum period $T=4$ at $t=1$ are
\begin{align*}
 q_1(1)&=(0, \,    0),   & \dot{q}_1(1)&=(-0.6328, \,   0); &\\
q_2(1)&=(-1.7141,  \,   0),   & \dot{q}_2(1)&=(0.3164, \,   0);& \\
q_3(1)&=(1.7141, \,   0),    & \dot{q}_3(1)&=(0.3164, \,   0). &
\end{align*}
If $\mathcal{P}$ is one part of the Broucke-H\'{e}non orbit, its action value $\mathcal{A}_{20}$ is 
\[\mathcal{A}_{20} \approx  3.46. \]
The initial condition of the Broucke-H\'{e}non orbit with minimum period $T=4$ is
\begin{align*}
 q_1(0)&=(-0.9031, \,  0),   &\dot{q}_1(0)&=(0,  \,   -2.4504); &\\
q_2(0)&=(-0.7321,  \,   0),   &\dot{q}_2(0)&=(0, \,    2.2283);& \\
q_3(0)&=(1.6352, \,   0),    &\dot{q}_3(0)&=(0, \,   0.2221). &
\end{align*}
Therefore, numerical evidence implies that the action minimizer $\mathcal{P}$ of \eqref{thispapergoal} coincides with the Schubart orbit. 
\end{remark}

%\section*{Acknowledgements}
%
%We would like to thank Alain Chenciner for valuable discussions on the existence of the Broucke-H\'{e}non solutions. Zhifu Xie is grateful to Shanzhong Sun for valuable conversations and his hospitality during Zhifu's visit to the Capital Normal University of China. Zhifu Xie gratefully acknowledges the support from NSF grant HRD-1409939. Duokui Yan is partially supported by NSFC  (No. 11432001) and the China Scholarship Council.

\end{document}